\documentclass[12pt]{amsart} 
\usepackage{amsmath, amssymb, amsthm, bm, mathtools, enumitem, caption}
\usepackage[pagebackref]{hyperref}
\usepackage{float}
\usepackage{xcolor}
\definecolor{codegray}{gray}{0.95}

\usepackage[noabbrev,capitalize]{cleveref}
\usepackage{adjustbox}
\crefname{equation}{}{}
\usepackage{ytableau}
\usepackage{graphicx, tikz}
\usepackage[textheight=9in, textwidth=6.95in]{geometry}

\usepackage{microtype}
\usepackage{bbm}
\usepackage{stmaryrd}

\newtheorem{theorem}{Theorem}[subsection]
\newtheorem{lemma}[theorem]{Lemma}
\newtheorem{assumption}[theorem]{Assumption}
\newtheorem{notation}[theorem]{Notation}

\newtheorem*{conjecture*}{Conjecture}

\theoremstyle{definition}
\newtheorem{definition}[theorem]{Definition}
\newtheorem{example}[theorem]{Example}

\theoremstyle{remark}

\numberwithin{equation}{subsection}

\DeclarePairedDelimiter\abs{\lvert}{\rvert}

\newcommand{\N}{\mathbb N}

\usepackage{bm} 

\newcommand{\qbin}[2]{\genfrac{[}{]}{0pt}{}{#1}{#2}_{q}}

\newcommand{\qbinom}[3][q]{\genfrac{[}{]}{0pt}{}{#2}{#3}_{#1}}

\newcommand{\ps}[1]{\llbracket #1 \rrbracket} 

\newcommand{\C}{\mathbb C}

\newcommand{\Q}{\mathbb Q}

\newcommand{\Z}{\mathbb Z}

\renewcommand{\abs}[1]{\left\vert #1 \right \vert}

\usepackage{color}
\definecolor{keywordcolor}{rgb}{0.7, 0.1, 0.1}   
\definecolor{tacticcolor}{rgb}{0.0, 0.1, 0.6}    
\definecolor{commentcolor}{rgb}{0.4, 0.4, 0.4}   
\definecolor{symbolcolor}{rgb}{0.0, 0.1, 0.6}    
\definecolor{sortcolor}{rgb}{0.1, 0.5, 0.1}      
\definecolor{attributecolor}{rgb}{0.7, 0.1, 0.1} 
\definecolor{backcolour}{rgb}{0.95,0.95,0.92}
\definecolor{filepathbg}{rgb}{0.88,0.88,0.84}
\definecolor{numcolor}{rgb}{0.6,0.35,0.0}

\usepackage{fvextra}
\usepackage[most]{tcolorbox}

\usepackage{iftex}
\ifLuaTeX
  \usepackage{fontspec}
\fi
\ifXeTeX
  \usepackage{fontspec}
\fi

\newcommand{\nhds}{{\char"1D4DD}}
\newcommand{\kw}[1]{\textcolor{keywordcolor}{\textbf{#1}}}
\newcommand{\var}[1]{\textcolor{tacticcolor}{#1}}
\newcommand{\prop}[1]{\textcolor{sortcolor}{#1}}
\newcommand{\num}[1]{\textcolor{numcolor}{#1}}
\newcommand{\imp}[1]{\textcolor{commentcolor}{\textbraceleft#1\textbraceright}}

\NewDocumentEnvironment{code}{m}{%
  \VerbatimEnvironment
  \begin{tcolorbox}[enhanced,breakable,sharp corners,boxrule=0pt,
    colback=backcolour,colbacktitle=filepathbg,coltitle=black,
    fonttitle=\ttfamily\small,title={\detokenize{#1}},
    left=6pt,right=6pt,top=4pt,bottom=4pt,toptitle=2pt,bottomtitle=2pt]%
  \begin{Verbatim}[commandchars=\\\{\},fontsize=\small]%
}{%
  \end{Verbatim}\end{tcolorbox}%
}

\title[The Rogers--Ramanujan identities]{
Formalized $q$-series: The Rogers--Ramanujan Identities and Beyond}

\thanks{2020 {\it{Mathematics Subject Classification.}} 68V20, 68V15, 05A30}
\keywords{$q$-series, Jacobi Triple Product, Rogers--Ramanujan identities}

\author{Kenny Lau, Seewoo Lee, and Ken Ono}

\address{Axiom Math, 124 University Avenue, Palo Alto, CA 94301}
\email{kenny@axiommath.ai}
\email{ken@axiommath.ai}

\address{Department of Mathematics, University of California—Berkeley, Berkeley, CA 94720}
\email{seewoo5@berkeley.edu}

\begin{document}

\begin{abstract} The theory of $q$-series and basic hypergeometric series plays a crucial role at the intersection of combinatorics, number theory, and representation theory. From the classical partition identities of Euler and Jacobi to modern developments in class field theory, vertex operator algebras, and the Monstrous Moonshine conjecture, $q$-series provide the analytic framework for a wide range of profound applications. In this paper, we discuss the formalization of this theory in the Lean proof assistant, a process that requires careful design of scalable and versatile structures to reconcile formal algebraic identities with analytic convergence properties. We address these foundational challenges by focusing on the construction of $q$-Pochhammer symbols, $q$-binomial coefficients, Bailey's Lemma and similar primitives. To demonstrate the utility of this work, we provide fully verified proofs of the Jacobi triple product identity and the celebrated Rogers--Ramanujan identities, which serve as both historical and technical benchmarks for the field. This work establishes a rigorous computational foundation for the future formalization of mock theta functions, modular forms, and the diverse algebraic structures that underpin their applications across mathematics and physics. AxiomProver was used to produce the formalizations in this paper.
\end{abstract}

\maketitle

\begin{center}
\fbox{\parbox{0.75\textwidth}{%
\begin{center}
\textbf{The formalization discussed in this paper is located at:}

\url{https://github.com/AxiomMath/RogersRamanujan}
\end{center}
}}
\end{center}

\section{Introduction and Statement of Results}

Few families of functions are as hospitable, or as surprising, as $q$-series.  At the most elementary level, they are power series in a parameter $q$; but already in Euler's work on partitions and infinite products, this simple notation became a language for encoding arithmetic, combinatorial, and analytic phenomena.  In the nineteenth century, Jacobi's theta functions and triple product identity revealed that $q$-series also govern elliptic functions and modular transformations.  In Ramanujan's notebooks, and later in the work of Rogers, Schur, Bailey, Andrews, and many others, the subject acquired a distinctive character: identities that appear as innocent generating function manipulations often conceal deep structure in partition theory, modular forms, representation theory, and statistical mechanics.  For general background on the classical theory and its combinatorial and hypergeometric aspects, see \cite{AndrewsBook,GasperRahman,SillsBook}.

This breadth is also what makes the subject a natural test case for formalized mathematics.  A $q$-series identity may be read as an equality of formal power series, as a theorem about convergent analytic functions on the unit disk, as a statement about partitions, or as the character identity of an algebraic object like a Lie algebra.  Human readers move between these interpretations with little comment; a proof assistant requires each transition to be explicit.  Formalization therefore does more than certify a final equality: it exposes the definitions, coercions, convergence hypotheses, indexing conventions, and reusable lemmas on which the equality depends.  Building on the success of formal proof in major mathematical projects and on the growth of shared libraries, this paper is a step in this direction for $q$-series.

\subsection{Some fundamental $q$-series}

Our first benchmark is Jacobi Triple Product, one of the foundational identities of the subject.  It converts a bilateral theta series into an infinite product, and thereby supplies the bridge between additive and multiplicative descriptions of theta functions.  Among its many consequences are Euler's pentagonal number theorem, product expansions for classical theta functions, and the product formulae that underlie numerous partition identities and modular-form constructions.  In this paper we formalize a proof of the following standard form of the identity, originally rooted in Jacobi's theory of elliptic functions (for example, see \cite{Andrews1974, Andrews1975, Andrews1986, AndrewsBook, Jacobi1829}).

\begin{theorem}[Jacobi Triple Product]\label{thm:JTP} If we let  $q,z\in\C$ with $\abs{q}<1$ and $z\ne 0$, then we have
$$
\sum_{n\in\Z}(-1)^n z^n q^{n(n-1)/2}
=
\prod_{m=1}^{\infty}(1-q^m)(1-zq^{m-1})(1-z^{-1}q^m).
$$
\end{theorem}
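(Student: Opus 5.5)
We will prove Theorem~\ref{thm:JTP} by the classical finite-to-infinite route. The finite identity is an exact polynomial identity provable by induction. The passage to the limit then uses only elementary dominated convergence for series and products with $\abs{q}<1$.

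The finite identity comes from the $q$-binomial theorem. For $N\ge 0$ we claim
\[
\prod_{k=0}^{N-1}(1-zq^{k}) = \sum_{j=0}^{N}(-1)^j \qbin{N}{j} q^{j(j-1)/2} z^j .
\]
This follows by induction on $N$ from the $q$-Pascal recurrence
\[
\qbin{N+1}{j}=\qbin{N}{j}+q^{N+1-j}\qbin{N}{j-1}.
\]
Apply it to the product $\prod_{k=-N}^{N-1}(1-zq^k)$ of length $2N$. Pull out the factors with negative $k$ via $1-zq^{-m} = -zq^{-m}(1-z^{-1}q^{m})$, and reindex $j=N+n$. This yields the symmetric finite triple product
\[
\prod_{m=1}^{N}(1-zq^{m-1})(1-z^{-1}q^{m}) = \sum_{n=-N}^{N} (-1)^n \qbin{2N}{N+n} \frac{q^{n(n-1)/2} z^n}{(q;q)_{2N}}\,(q;q)_{N+n}(q;q)_{N-n}\cdot\frac{1}{(q;q)_{N+n}(q;q)_{N-n}}(q;q)_{2N}\cdot\frac{1}{(q;q)_{2N}} .
\]
Written cleanly, the right-hand side is $\sum_{n=-N}^{N}(-1)^n z^n q^{n(n-1)/2}\,\frac{(q;q)_{2N}}{(q;q)_{N+n}(q;q)_{N-n}}$, where $(q;q)_k=\prod_{i=1}^k(1-q^i)$. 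The bookkeeping of the sign and the power of $q$ in this step is routine but fiddly. It is the step where an indexing error is most likely, so we will check it on $N=1$.

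To take the limit, multiply both sides by $(q;q)_{N}$ and then let $N\to\infty$ with $\abs{q}<1$ and $z\ne0$ fixed.

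\begin{itemize}
\item \emph{Left side.} All three products $\prod(1-q^m)$, $\prod(1-zq^{m-1})$ and $\prod(1-z^{-1}q^m)$ converge, because $\sum\abs{q}^m<\infty$. Hence the left side tends to the right-hand side of the theorem.
\item \emph{Right side, termwise.} For each fixed $n$, the coefficient is $(q;q)_{N}(q;q)_{2N}/\bigl((q;q)_{N+n}(q;q)_{N-n}\bigr)$. It tends to $(q;q)_\infty^2/(q;q)_\infty^2=1$.
\item \emph{Right side, domination.} We need a bound uniform in $N$. Set $P_+=\prod_{i\ge1}(1+\abs{q}^i)$ and $P_-=\prod_{i\ge1}(1-\abs{q}^i)>0$. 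Then every ratio of $q$-Pochhammer factors above has modulus at most $P_+^2/P_-^2$. The series $\sum_n \abs{z}^n\abs{q}^{n(n-1)/2}$ converges because of the quadratic exponent. Tannery's theorem (dominated convergence for series) therefore gives convergence to $\sum_{n\in\Z}(-1)^n z^n q^{n(n-1)/2}$.
\end{itemize}

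The main obstacle we expect is this analytic limit step, in particular justifying the uniform bound and the exchange of limit and bilateral sum. The algebraic part is purely mechanical. For the limit, we will first try to isolate a general Tannery-type lemma over $\Z$-indexed sums, together with a lemma that $(q;q)_N\to(q;q)_\infty$. The theorem then reduces to these two lemmas plus the finite identity.
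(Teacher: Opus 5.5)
Your proof is correct, but it takes a different route from the paper's.

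\textbf{Your route.} You use Cauchy's finite method. The finite $q$-binomial theorem, applied to $\prod_{k=-N}^{N-1}(1-zq^k)$, gives an exact identity for the truncated product with coefficients $\qbin{2N}{N+n}$. A single Tannery-type limit, with the uniform bound $P_+^2/P_-^2$, then finishes. Your first display of that identity is garbled: its extra factors collapse to $\qbin{2N}{N+n}/(q;q)_{2N}$, which is off by a factor $(q;q)_{2N}$. The ``written cleanly'' version is the correct one.

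\textbf{The paper's route.} The paper follows Chan's argument and works with infinite objects throughout. It expands $(a^{-1}q)_\infty$ by the infinite $q$-binomial theorem, multiplies by $(q)_\infty$, and uses the shift lemma to obtain $(q^{m+1})_\infty$. It then extends the sum to $m\in\Z$, since $(q^{m+1})_\infty=0$ for $m<0$, expands again, reindexes by $n=-(m+k)$, and recognizes $\sum_k a^k/(q)_k=(a)_\infty^{-1}$. All of this happens in a complete strongly non-archimedean ring with $a,q$ topologically nilpotent units. The invertibility assumptions are removed afterwards: extracting the coefficient of $a^\ell$ gives the $\Z\ps{q}$ identities $\sum_k q^{k(k+\ell)}/\bigl((q)_k(q)_{k+\ell}\bigr)=(q)_\infty^{-1}$, and reversing the steps reaches $(q)_\infty(b)_\infty(c)_\infty$ assuming only $bc=q$.

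\textbf{What each buys.} Your route proves the complex statement exactly as posed, with one polynomial identity and one limit, and no double-series rearrangement. The paper's route never uses an archimedean absolute value, reuses Euler expansions its library needs anyway for Heine and Bailey, and yields the ring-theoretic generalization.

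Note also that with $b=z$ and $c=z^{-1}q$, your finite identity is a polynomial identity in $\Z[b,c]$ with $q=bc$. It could therefore serve as the universal ``main ingredient'' the paper says it did not find for the triple product. The cost is that the limit would have to be redone in a strongly non-archimedean ring. There, boundedness of the $q$-binomial coefficients together with $b^nq^{\binom n2}\to 0$ would replace your archimedean domination.
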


This theorem plays a central role in framing Jacobi forms and half-integral weight modular forms (for example, see \cite{BFOR,EichlerZagier}). Moreover, this identity is quite versatile, and easily leads to identities such as Euler's Pentagonal Number Theorem (Theorem \ref{thm:PNT})
$$
\prod_{n=1}^{\infty} (1-q^n)=\sum_{k\in \Z} (-1)^k q^{\frac{3k^2+k}{2}},
$$
and Jacobi's identity (Theorem \ref{thm:jacobi-triangular})
$$
\prod_{n=1}^{\infty} (1-q^n)^3 = \sum_{k=0}^{\infty} (-1)^k (2k+1)q^{\frac{k^2+k}{2}}.
$$
The analytic statement in Theorem~\ref{thm:JTP} is the familiar mathematical formulation. The formal development separates the algebraic skeleton of the identity from the analytic hypotheses that justify the infinite sums and products.  This separation is one of the central design themes of this paper.

We next turn to the iconic $q$-series of Rogers and Ramanujan.  These formulas first emerged from partition theory and now reverberate across combinatorics, number theory, representation theory, and physics. To this end, we begin by recalling the fundamental $q$-Pochhammer symbols
\begin{equation}
(a;q)_n:=\prod_{j=0}^{n-1}(1-aq^{j}) \qquad\text{and}\qquad
(a;q)_\infty:=\prod_{j=0}^{\infty}(1-aq^{j}).
\end{equation}
The two Rogers--Ramanujan $q$-series are
\begin{equation}\label{RR1}
G(q):=\sum_{n\ge 0}\frac{q^{n^2}}{(q;q)_n} \qquad {\text {and}} \qquad
H(q):=\sum_{n\ge 0}\frac{q^{n(n+1)}}{(q;q)_n}.
\end{equation}
They satisfy the striking sum-product identities discovered by Rogers, rediscovered by Ramanujan, and given important finite forms by Schur \cite{Rogers1894,RR1919,Schur1917}.\footnote{Schur proved finite polynomial versions, from which the infinite product identities follow by taking the limit.}

\begin{theorem}[The Rogers--Ramanujan identities]\label{thm:RR_Identities}
As formal power series, we have
\begin{equation}
G(q)=\frac{1}{(q;q^5)_\infty\,(q^{4};q^{5})_\infty} \qquad {\text {and}} \qquad
H(q)=\frac{1}{(q^{2};q^{5})_\infty\,(q^{3};q^{5})_\infty}.
\end{equation}
\end{theorem}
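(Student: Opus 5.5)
We will deduce both identities from the Jacobi Triple Product (Theorem~\ref{thm:JTP}), via the Rogers--Ramanujan sum side written as a bilateral theta series. Since the abstract advertises Bailey's Lemma, the cleanest route is this. We take the unit Bailey pair relative to $a=1$ (respectively $a=q$). We apply Bailey's Lemma with both parameters $\rho_1,\rho_2\to\infty$. This yields the two identities
\begin{equation*}
G(q)=\frac{1}{(q;q)_\infty}\sum_{n\in\Z}(-1)^n q^{n(5n-1)/2},
\qquad
H(q)=\frac{1}{(q;q)_\infty}\sum_{n\in\Z}(-1)^n q^{n(5n-3)/2}.
\end{equation*}
Concretely, the Bailey pair is $\beta_n=1/(q;q)_n(aq;q)_n$, with $\alpha_n$ given by the usual alternating expression involving $q^{n(3n-1)/2}$ and the factor $(1-aq^{2n})/(1-a)$. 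Bailey's Lemma in the limit gives
\begin{equation*}
\sum_{n\ge0} a^nq^{n^2}\beta_n=\frac{1}{(aq;q)_\infty}\sum_{n\ge0}a^nq^{n^2}\alpha_n .
\end{equation*}
For $a=1$ the left side is $G(q)$. The right side collapses, after pairing the $n$ and $-n$ terms, to the stated bilateral sum. The case $a=q$ is analogous. There one uses $(q^2;q)_n(1-q)=(q;q)_{n+1}$ to turn $\beta_n$ into $1/(q;q)_n$ times the correct normalization.

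Second, apply Theorem~\ref{thm:JTP} with $q\mapsto q^5$ and $z=q^2$ (for $G$) or $z=q$ (for $H$). With $z=q^2$ we have $z^nq^{5n(n-1)/2}=q^{n(5n-1)/2}$, so the product side becomes
\begin{equation*}
\prod_{m\ge1}(1-q^{5m})(1-q^{5m-3})(1-q^{5m-2})=(q^5;q^5)_\infty(q^2;q^5)_\infty(q^3;q^5)_\infty .
\end{equation*}
This is the product for the $z=q^2$ case. Its quotient by $(q;q)_\infty=(q;q^5)_\infty(q^2;q^5)_\infty(q^3;q^5)_\infty(q^4;q^5)_\infty(q^5;q^5)_\infty$ is exactly $1/\bigl((q;q^5)_\infty(q^4;q^5)_\infty\bigr)$. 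The sign and index bookkeeping must be matched with the bilateral sum produced in the first step, and we expect it to pair $z=q^2$ with $G$. The case $z=q$ similarly gives $(q^5;q^5)_\infty(q;q^5)_\infty(q^4;q^5)_\infty$ and hence the $H$ identity. The factorization of $(q;q)_\infty$ by residues mod $5$ is a reindexing of a convergent product.

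Third, transfer to formal power series. Both sides are analytic on $|q|<1$ and equal there, so their Taylor coefficients agree. Alternatively, one checks that each manipulation is valid coefficientwise, since every series involved has only finitely many terms contributing to each power of $q$.

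The main obstacle is the first step: establishing the Bailey pair and the limiting form of Bailey's Lemma. This requires the $q$-Pfaff--Saalschütz sum, or a direct verification of $\beta_n=\sum_r \alpha_r/(q;q)_{n-r}(aq;q)_{n+r}$ by a finite $q$-binomial identity. It also requires justifying $\rho_1,\rho_2\to\infty$ as a coefficientwise limit. We would first try to prove the unit Bailey pair by induction on $n$ using $q$-binomial recurrences. Failing that, we would use Schur's finite polynomial identities and pass to the limit.
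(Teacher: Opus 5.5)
\paragraph{Gap in the first step.} Your second and third steps are fine: Jacobi's triple product with $q\mapsto q^5$ and $z=q^2$ or $z=q$, the mod-$5$ splitting of $(q;q)_\infty$, and the transfer to coefficients. The outline of your first step also matches the paper. However, the Bailey pair you write down is wrong, and as stated the step fails. The pair $\beta_n=1/\bigl((q;q)_n(aq;q)_n\bigr)$, taken together with the alternating $\alpha_n$ containing $q^{n(3n-1)/2}$, is not a Bailey pair. The relation $\beta_n=\sum_{r\le n}\alpha_r/\bigl((q;q)_{n-r}(aq;q)_{n+r}\bigr)$ is triangular with invertible diagonal coefficient $1/(aq;q)_{2n}$, so $\beta_n=1/\bigl((q;q)_n(aq;q)_n\bigr)$ forces $\alpha_n=\delta_{n,0}$. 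With your $\beta_n$, the left side at $a=1$ is $\sum_n q^{n^2}/(q;q)_n^2$. That is the Durfee-square expansion of $1/(q;q)_\infty$, not $G(q)$. At $a=q$ your $\beta_n$ equals $(1-q)/\bigl((q;q)_n(q;q)_{n+1}\bigr)$, which is not a constant multiple of $1/(q;q)_n$. So no normalization via $(q^2;q)_n(1-q)=(q;q)_{n+1}$ can produce $H(q)$.

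\paragraph{The fix: apply Bailey's lemma twice.} This is what the paper does.
\begin{itemize}
\item First apply the finite $\rho_1,\rho_2\to\infty$ form (Lemma~\ref{lem:bailey-limit-1}) to the unit pair $(\alpha_n,\delta_{n,0})$. This gives $\beta'_n=\sum_{j\le n}a^jq^{j^2}\delta_{j,0}/(q;q)_{n-j}=1/(q;q)_n$, independent of $a$, and $\alpha'_n=a^nq^{n^2}\alpha_n$. The factor $q^{n^2+\binom{n}{2}}=q^{n(3n-1)/2}$ comes from this step.
\item Then apply the $n\to\infty$ form (Lemma~\ref{lem:bailey-limit-2}). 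It gives $\sum_n a^nq^{n^2}/(q;q)_n=(aq;q)_\infty^{-1}\sum_n a^nq^{n^2}\alpha'_n$, whose left side is $G$ at $a=1$ and $H$ at $a=q$.
\end{itemize}
Applying the $n\to\infty$ form directly to the unit pair at $a=1$ gives $(q;q)_\infty=\sum_{n\in\Z}(-1)^nq^{n(3n-1)/2}$ instead, which is the pentagonal theorem rather than Rogers--Ramanujan.

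With this correction, nothing needs to be done to $\beta_n$ at $a=q$. The factor $1-q$ sits on the product side: $(q^2;q)_\infty=(q;q)_\infty/(1-q)$ cancels the $1/(1-q)$ in $(1-q^{2n+1})/(1-q)$.

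\paragraph{Remaining differences.} The paper never takes $\rho_1,\rho_2\to\infty$ as an actual limit. It proves the limiting lemmas directly from the finite identity of Lemma~\ref{lem:binomial-aux-1} with $z=aq^{2r+1}$, together with its infinite version. It also works in complete (strongly) non-archimedean rings rather than passing through analytic functions on $|q|<1$.
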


The Rogers--Ramanujan identities, and their proofs, are especially well-suited as a benchmark for a formal library.  They require finite and infinite products, nontrivial reindexing of sums, control of convergence or formal limiting processes, and a precise interface between basic hypergeometric transformations and product expansions.  They also connect to a large mathematical ecosystem: partition theory, solvable lattice models, conformal field theory, vertex operator algebras, and generalized Rogers--Ramanujan identities all use variants of these ideas \cite{AndrewsBook,BaxterBook,DMS,FLM,GasperRahman,KacIDA,LepowskyLi,GOW,SillsBook}.  Moreover, after multiplying by suitable fractional powers of $q=e^{2\pi i\tau}$, with $\tau\in\mathbb{H}$, the product sides become the level $5$ modular functions
\begin{equation}\label{RRnormalized}
f_1(\tau):=q^{-1/60}G(q)\qquad {\text {and}}\qquad  f_2(\tau):=q^{11/60}H(q),
\end{equation}
placing the identities directly in the web of modular forms and modular functions (see these references for background on modular forms and functions \cite{DiamondShurman,OnoCBMS,Schoeneberg,ZagierRR}).

The paper is organized as follows.  In \Cref{sec:Formalizing_q_series} we describe the mathematical architecture of the Lean development: the treatment of finite and infinite products, the definitions of $q$-Pochhammer symbols and $q$-binomial coefficients, and the formalization of Bailey pairs and Bailey's Lemma.  In \Cref{sec:JTP} we explain the formalized proof of \Cref{thm:JTP}, emphasizing the translation between bilateral sums, products, and the theorem statement used by later sections.  In \Cref{sec:RR} we prove \Cref{thm:RR_Identities} inside the library and describe how the proof tests the scalability of the preceding infrastructure.  Finally, \Cref{sec:Appendix} records the role of the AxiomProver system in the project and gives reproducibility information for the formal artifact. AxiomProver is an AI system for mathematical research via formal proof that is currently under development by Axiom Math.

\subsection*{Acknowledgements}

AI-assisted tools were used in the formalization work underlying this paper. Most notably, AxiomProver, an AI theorem-proving system currently under development by Axiom Math, was used to formalize several of the results presented here. A detailed account of the formalizations carried out using AxiomProver is provided in Appendix \ref{sec:axiomprover}. Claude Code was also used in a supporting role, primarily for sketching formalization approaches and assisting with subsequent cleanup.

\section{A prelude to formalization}

The purpose of this section is to explain mathematical background and design principles for the library. The central formalization problem is that classical notation suppresses several distinctions: a finite product may later be sent to a limit, a formal product may be interpreted analytically, and the same symbol $(a;q)_n$ is used in rings, fields, normed algebras, and spaces of formal series.  Our development is designed to make these transitions explicit while keeping the user-facing statements close to ordinary mathematical language.  We work in Lean \cite{Lean}, building on \texttt{mathlib} \cite{Mathlib2020}, and isolate the reusable primitives needed for Jacobi Triple Product, Bailey's Lemma \cite{Bailey1949}, and the Rogers--Ramanujan identities.

\subsection{Mathematical Background} In this subsection we develop the necessary mathematical prerequisites we require.

\subsubsection{Assumptions}

All rings in this paper are assumed to be commutative. In accordance with Lean, $\N$ (the set of natural numbers) contains $0$. When necessary, we denote $\N \setminus \{0\}$ by $\N^+$, also in accordance with Lean.

\subsubsection{Typeclass resolution}

Lean has a built-in typeclass resolution system \cite{SelsamTypeclass} which is very powerful. For example, a field is automatically a ring. Formally, if we assume \texttt{(F : Type*) [Field F]}, then Lean can automatically populate the type \texttt{Ring F} (with the mathematically correct term). This means that definitions and theorems only have to be stated with the most general typeclass assumptions, and then they will be able to be applied in a wide variety of situations. To this end, $(a; q)_n$ is defined over commutative rings, and $(a; q)_\infty$ is defined over any commutative ring that also has a topological space structure, without any requirement of compatibility between the ring and the topology. Namely, in Lean syntax we have the following definitions.

\begin{code}{RogersRamanujan/NumberTheory/QTheory/Defs.lean}
\kw{def} qPochhammer \imp{R} [CommRing \var{R}] (\var{a} \var{q} : \var{R}) (\var{n} : ℕ) : \var{R} :=
  ∏ \var{i} ∈ range \var{n}, (\num{1} - \var{a} * \var{q} ^ \var{i})
\end{code}

\begin{code}{RogersRamanujan/NumberTheory/QTheory/Defs.lean}
\kw{def} qPochhammerInf \imp{R} [TopologicalSpace \var{R}] [CommRing \var{R}] (\var{a} \var{q} : \var{R}) : \var{R} :=
  ∏'[conditional ℕ] \var{i}, (\num{1} - \var{a} * \var{q} ^ \var{i})
\end{code}

\subsubsection{Analytic vs. Algebraic}

Theorems in $q$-series are often stated analytically, in terms of $q \in \C$, and where power series represent analytic functions on the unit complex disc. However, those objects are difficult to work with algebraically, for example if we consider the ring of power series, then asking about convergence puts an extra level of complication. For example, the set of convergent (with a fixed radius) power series in $q$ is not closed under the natural $q$-adic topology. Therefore, we have decided to completely transform the theory into an algebraic one, and replace the analytic objects with non-archimedean rings.

As it turns out, even the assumption of being non-archimedean is still too weak if we want $(a; q)_\infty$ to actually converge, since the non-archimedean axiom does not put any constraint on the multiplicative structure. We know that $q$ should be topologically nilpotent, so that the multiplicands $1 - aq^n$ tend to $1$. However, this does not guarantee convergence, even for non-archimedean rings (see the counterexample in Theorem \ref{thm:no-poch-inf}). To this end, we define a new class of rings which we call  \texttt{StrongNonarchimedeanRing} in the project. Recall that a non-archimedean ring is a topological ring where the set of open additive subgroups forms a basis of the neighborhoods of $0$. A \textbf{strongly non-archimedean ring} is a topological ring where the set of \textit{open additive subrings without unity} forms a basis of the neighborhoods of $0$. In Lean:

\begin{code}{RogersRamanujan/Topology/Algebra/Nonarchimedean/Strong.lean}
\kw{class} StrongNonarchimedeanRing (\var{R}) [Ring \var{R}] [TopologicalSpace \var{R}] :
    Prop \kw{extends} NonarchimedeanRing \var{R} \kw{where}
  exists_mul_subset_self (\var{U} : Set \var{R}) (\var{hU} : \var{U} ∈ \nhds \num{0}) :
    ∃ \var{V} : Set \var{R}, \num{0} ∈ \var{V} ∧ IsOpen \var{V} ∧ \var{V} ⊆ \var{U} ∧ \var{V} * \var{V} ⊆ \var{V}
\end{code}

In this class of rings we recover a classical theorem: if $\sum a_n$ converges absolutely, then $\prod (1 + a_n)$ converges. This theorem is well-known for complex numbers but for non-archimedean settings, it turns out that we need a \textit{strongly} non-archimedean ring to recover the theorem. Namely, we have that if $a_n\rightarrow 0$, then $\prod(1+a_n)$ converges. In Lean:

\begin{code}{RogersRamanujan/Topology/Algebra/Nonarchimedean/Strong.lean}
\kw{theorem} multipliable_one_add_of_tendsto_zero \imp{R} [CommRing \var{R}] [UniformSpace \var{R}]
  [CompleteSpace \var{R}] [StrongNonarchimedeanRing \var{R}] [IsUniformAddGroup \var{R}]
  \imp{a : ℕ → R} (\var{ha} : Tendsto \var{a} atTop (\nhds \num{0})) : Multipliable (\num{1} + \var{a} ·) :=
\end{code}

It is tempting to resort to ``linear topology'' \footnote{\url{https://leanprover-community.github.io/mathlib4_docs/Mathlib/Topology/Algebra/LinearTopology.html\#IsLinearTopology}}, which is a common mistake. This is in fact not general enough, as the $q$-adic topology on the field of Laurent series is not a linear topology, but we would want to talk about negative powers such as $(-q^{-1}; q)_\infty$.

\subsubsection{Universal rings}

 Identities can often be abstracted to a ``universal ring'', i.e. a polynomial-like ring where that identity holds, and then every other instance of that identity in any ring can be derived from universal ring homomorphisms. For example, the identity $$x^2 - y^2 = (x-y)(x+y)$$ takes place in $\Z[x,y]$ which can then be mapped to any commutative ring with two chosen elements $a$ and $b$ to deduce $a^2 - b^2 = (a-b)(a+b)$. We can think of the one equality in the universal ring as the ``main ingredient'' of this identity.

This might apply to the Jacobi triple product identity, but the authors have not found an easy and clean approach, which complicates our approach to the Jacobi triple product identity. On the other hand, we will not face these issues for the two Rogers--Ramanujan identities. This will be explained more in Section \ref{sec:relax}.
The authors found an alternative pathway to extract the main ingredient of the Jacobi triple product identity: as a natural-indexed family of equalities in $\Z\ps{q}$, which have natural combinatorial interpretations. This will be explained more in Lemma \ref{lem:jtp-iden}.

\subsubsection{A new theory of multivariate Laurent series}

We often need to ``borrow'' negative powers to make proofs easier. For example, the statement itself of the Jacobi triple product identity mentions $a^{-1}$, and the proof is easier if we involve $q^{-1}$.

The authors failed to find satisfactory theories of multivariate Laurent series, so we have invented the following implementation as mathlib already supports Hahn series\footnote{\url{https://leanprover-community.github.io/mathlib4_docs/Mathlib/RingTheory/HahnSeries/Basic.html}}, defined for an ordered additive group $\Gamma$ over a base ring $R$. It consists of functions $\Gamma \to R$ whose support is well-founded under the order. We then set $\Gamma$ to be finitely supported functions from the set of formal variables to $\Z$, and the ordering is by the sum, which we call the total degree. The ordering is defined by the following:
\[
\left\{
\begin{matrix}
    f < g &\iff& \sum_i f_i < \sum_i g_i \\
    f \le g &\iff& (\sum_i f_i < \sum_i g_i) \lor (f = g)
\end{matrix}
\right.
\]

Then, well-founded subsets are those whose total degree is bounded below, and at each degree, there are only finitely many contributors.
Its ``ring of integers'' consists of the functions supported on exponents with non-negative total degrees, and this ring is strictly bigger than the power series (for non-trivial situations). For example, $xy^{-1}$ has total degree $0$ but is not a power series.
The authors found this theory satisfactory because the ring of integers contains the power series, coefficients can be extracted from each element, and the ring is complete under the topology induced by ``total degrees''.

\subsection{Design Principles; formal, algebraic, and analytic considerations}
Before we consider concrete formalization, we offer further comments that explicate some of the choices we make later.

\subsubsection{The philosophy of junk value}

People often talk about functions with a restricted domain. For example, the square-root ``function'' $\sqrt{x}$ is considered to be defined only for $x \ge 0$. This introduces type-theoretic baggage as from a type-theory standpoint, $x$ could \textit{a priori} be any real number, so a completely literal translation of this into Lean would mean that every time you take the square root of $x$, you have to include a proof of $x \ge 0$. This is why mathlib chose a different path \cite{Buzzard2020}: in mathlib, \texttt{Real.sqrt}\footnote{\url{https://leanprover-community.github.io/mathlib4_docs/Mathlib/Analysis/SpecialFunctions/Sqrt.html\#Real.sqrt}} is defined for all real numbers, by taking on the junk value $0$ for negative inputs. The advantage of this approach is the \textit{separation of data and proofs}, which among other things makes statements look cleaner.

In light of this, we found that in this project we often need to talk about multiplicative inverses, but we again do not want to always have to provide the proof that an element is invertible, so we have invented a function \texttt{bInv x} which returns the both-sided inverse of $x$ if it exists, and the junk value $1$ otherwise. In Lean:

\begin{code}{RogersRamanujan/Algebra/Group/Units/Basic.lean}
\kw{def} bInv \imp{α} [Monoid \var{α}] (\var{x} : \var{α}) : \var{α} :=
  \kw{open} \kw{scoped} Classical \kw{in} \kw{if} \var{h} : IsUnit \var{x} \kw{then} ↑\var{h}.\prop{unit}⁻¹ \kw{else} \num{1}
\end{code}

However, it is not always beneficial to separate data and proofs. For example, sometimes it makes proving less efficient. To this end, mathlib has \textit{both} versions when it comes to topological summation. \texttt{HasSum f a}\footnote{\url{https://leanprover-community.github.io/mathlib4_docs/Mathlib/Topology/Algebra/InfiniteSum/Defs.html\#HasSum}} means that the (suitably defined) limit of summing over the values of the function $f$ is $a$, which inherently is the proof, whereas \texttt{tsum}\footnote{\url{https://leanprover-community.github.io/mathlib4_docs/Mathlib/Topology/Algebra/InfiniteSum/Defs.html\#tsum}} is a total function with junk value so that \texttt{tsum f} returns an arbitrary limit of the above sum if it exists, and the junk value $0$ otherwise. We found that for this project both are useful to have.

\subsection{Previous formalizations of $q$-series and related topics}

As of the time of writing this paper, there is no formalization of a proper $q$-theory in \texttt{mathlib} (version v4.30.0).
However, there are some formalizations of relevant theory as separate projects.
Notably, RepoProver \cite{repoprover} formalized Grinberg's algebraic combinatorics textbook \cite{grinberg}, where Chapter 4 of the book introduces partitions, $q$-binomial coefficients, Jacobi Triple Product, and Euler's Pentagonal Number Theorem.
These are all (auto)formalized by RepoProver\footnote{\url{https://github.com/facebookresearch/algebraic-combinatorics}}, but there are some differences with our formalization.

First, their version of the Jacobi triple product identity is the following: for $a, b \in \mathbb{Z}$ with $a > 0$ and $a \ge |b|$, and $u, v \in \mathbb{Q}$ with 
\begin{equation}
\label{eqn:jtb_grinberg}
    \prod_{n = 1}^{\infty} (1 + u^{2n-1}v x^{(2n-1)a+b})(1 + u^{2n-1}v^{-1} x^{(2n-1)a - b})(1 - u^{2n}x^{2na}) = \sum_{l \in \mathbb{Z}} u^{l^2} v^{l} x^{al^2 + bl}.
\end{equation}
(See Theorem 4.3.2 of \cite{grinberg} and its proof.)
The assumptions on $a$ and $b$ are needed to ensure that all the exponents in \eqref{eqn:jtb_grinberg} are nonnegative, so that the identity holds in $\mathbb{Q}\llbracket x \rrbracket$.
Then the original Jacobi triple product identity can be recovered with $q = ux^a$ and $z = vx^b$, which holds in $\mathbb{Z}[z^{\pm}]\llbracket q \rrbracket$ (see \texttt{jacobi\_triple\_product\_fps'}).

For comparison, the formalization we offer (i.e., Theorem \ref{thm:jtp-general}) holds for more general rings (\texttt{StrongNonarchimedeanRing}).
In the case of Euler's pentagonal number theorem, RepoProver formalized it as an identity of power series over $\mathbb{Q}$, while our formalization (Theorem \ref{thm:PNT}) again holds for any \texttt{NonarchimedeanRing}.
Wang also formalized Euler's pentagonal number theorem\footnote{\url{https://github.com/wwylele/PentagonalNumberTheorem}} in Lean. One version holds for general $T_2$ topological rings, which requires summability and convergence of relevant infinite series and products as hypothesis (see \texttt{pentagonalNumberTheorem\_generic}). Another version is as an identity over a power series ring over $T_2$ topological rings, which is more general than RepoProver's formalization (see \texttt{pentagonalNumberTheorem\_intPos\_powerSeries}), but still not as general as ours.

Jacobi Triple Product, Euler's Pentagonal Number Theorem, and the Rogers--Ramanujan identities are also formalized in Isabelle \cite{isabelle} by Eberl \cite{isabelle-theta,isabelle-pentagonal,isabelle-rr}.
The proof of the Jacobi triple product identity follows Ramanujan's original argument in his lost notebook \cite{Berndt} using Ramanujan's theta function
$$
f(a,b) = \sum_{n=-\infty}^{\infty} a^{\frac{n(n+1)}{2}} b^{\frac{n(n-1)}{2}},
$$
where the final result is stated for complex numbers (see \texttt{ramanujan\_theta\_triple\_product\_complex} and \texttt{jacobi\_theta\_nome\_triple\_product\_complex}), and Euler's pentagonal number theorem follows from it.
The proof of the Rogers--Ramanujan identities follows the argument in \cite{AndrewsPartition}, whereas our work makes fundamental use of Bailey's lemma. Instead, it proves certain finite identities and takes limits of them, and applies the Jacobi triple product identity to complete the proof.

\section{Formalizing \texorpdfstring{$q$}{q}-series}\label{sec:Formalizing_q_series}

\subsection{Finite products, infinite products, and controlled limits}

In mathlib, taking a finite product of elements requires a commutative monoid. Then, an arbitrary product\footnote{\url{https://leanprover-community.github.io/mathlib4_docs/Mathlib/Topology/Algebra/InfiniteSum/Defs.html\#tprod}} require a topology, and is defined using filters and can be optionally customized using summation filters. We first state the definition in full generality, and then specialize to two cases that are often used.

\begin{definition}
    A \textbf{summation filter}\footnote{\url{https://leanprover-community.github.io/mathlib4_docs/Mathlib/Topology/Algebra/InfiniteSum/SummationFilter.html\#SummationFilter}} on $\alpha$ is a filter $L$ on $|\alpha|^{<\omega}$, the set of finite subsets of $\alpha$, often assumed to satisfy the following two conditions:
\begin{enumerate}
    \item $\varnothing \notin L$
    \item For any finite set $S \subseteq \alpha$, $\{T \in |\alpha|^{<\omega} \mid T \supseteq S\} \in L$.
\end{enumerate}
\end{definition}

\begin{definition}
    Define ``$y \in \beta$ is a product of $f : \alpha \to \beta$ under the summation filter $L$'' to mean that for any neighborhood $U$ of $y$, there is $S_0 \in L$, such that for any $S \in S_0$, the finite product $\prod_{x \in S} f(x)$ is in $U$.\footnote{\url{https://leanprover-community.github.io/mathlib4_docs/Mathlib/Topology/Algebra/InfiniteSum/Defs.html\#HasProd}}
\end{definition}

Note that such $y$ is not necessarily unique, unless $\beta$ is a $T_2$-space.

\begin{definition}
    The \textbf{unconditional} summation filter on $\alpha$ is the summation filter generated by $\{T \in |\alpha|^{<\omega} \mid T \supseteq S\}$ for $S \in |\alpha|^{<\omega}$.\footnote{\url{https://leanprover-community.github.io/mathlib4_docs/Mathlib/Topology/Algebra/InfiniteSum/SummationFilter.html\#SummationFilter.unconditional}}
\end{definition}

Note that unless specified otherwise, the unconditional summation filter is assumed when we talk about arbitrary products.

\begin{definition}
    The \textbf{conditional} summation filter on $\N$ is the summation filter generated by $\{[0,n] \mid n \ge N\}$ for $N \in \N$.\footnote{\url{https://leanprover-community.github.io/mathlib4_docs/Mathlib/Topology/Algebra/InfiniteSum/SummationFilter.html\#SummationFilter.conditional}}
\end{definition}

Note that it is defined more generally, but we often only need the one for $\N$. We now explain their names:

\begin{theorem}
    The unconditional summation filter picks out unconditional convergence, in the sense that $y \in \beta$ is unconditionally a product of $f : \alpha \to \beta$ iff for any neighborhood $U$ of $y$, there is a finite set $S \subseteq \alpha$, such that for any finite $T \supseteq S$, the finite product $\prod_{x \in T} f(x)$ is in $U$.

    The conditional summation filter picks out conditional convergence, in the sense that $f : \N \to \beta$ has product $y \in \beta$ iff $\lim_{n \to \infty} \left( \prod_{i=0}^{n-1} f(i) \right) = y$.
\end{theorem}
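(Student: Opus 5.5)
Both claims come down to unwinding the definition of ``$y$ is a product of $f$ under $L$'' and identifying a basis of the filter $L$. The general fact I will use first is this: if a filter $L$ is generated by a family $\mathcal B$ that is closed under finite intersections up to containment (each intersection of two members contains a member), then $\mathcal B$ is a filter basis. In that case membership $S_0\in L$ is equivalent to $S_0\supseteq B$ for some $B\in\mathcal B$. Since the defining condition only asks for \emph{some} $S_0\in L$ with $\prod_{x\in S}f(x)\in U$ for all $S\in S_0$, and shrinking $S_0$ preserves this, we may replace $S_0$ by a basic member $B\subseteq S_0$. Conversely, every basic member lies in $L$. So ``$y$ is a product of $f$ under $L$'' becomes: for every neighbourhood $U$ of $y$ there is $B\in\mathcal B$ with $\prod_{x\in S}f(x)\in U$ for all $S\in B$.

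For the unconditional filter, take $\mathcal B=\{\,\{T\supseteq S\}: S\in|\alpha|^{<\omega}\,\}$. It is closed under intersections because $\{T\supseteq S_1\}\cap\{T\supseteq S_2\}=\{T\supseteq S_1\cup S_2\}$, and $S_1\cup S_2$ is again finite. Substituting into the reformulation above gives exactly the stated characterisation: for every neighbourhood $U$ there is a finite $S$ such that every finite $T\supseteq S$ has $\prod_{x\in T}f(x)\in U$.

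For the conditional filter on $\N$, take $\mathcal B=\{B_N: N\in\N\}$ with $B_N=\{[0,n]: n\ge N\}$. These sets are nested decreasing, since $B_N\cap B_M=B_{\max(N,M)}$, so $\mathcal B$ is a filter basis. The reformulated condition then says: for every $U$ there is $N$ such that $\prod_{i\in[0,n]}f(i)\in U$ for all $n\ge N$. This is exactly $\lim_{n\to\infty}\prod_{i=0}^{n}f(i)=y$. The statement uses $\prod_{i=0}^{n-1}$ instead, which amounts to reading $[0,n]$ as $\{0,\dots,n-1\}$ (the Lean \texttt{range n}), or equivalently shifting the index by one. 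Either way a tail of the sequence $(\prod_{i<n}f(i))_n$ coincides with a reindexed tail of $(\prod_{i\le n}f(i))_n$, and convergence of a sequence is unaffected by such a shift.

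The only delicate step is the general basis lemma, specifically that the filter generated by $\mathcal B$ consists exactly of the supersets of members of $\mathcal B$. In general the generated filter consists of supersets of \emph{finite intersections} of generators, so I first check directed-ness as above and then cite the standard fact that a directed family generates the filter of its supersets (\texttt{Filter.HasBasis} / \texttt{Filter.generate\_eq\_biInf} style in mathlib). Everything after that is bookkeeping. The index convention in the conditional case is the only place where an off-by-one mismatch with the displayed formula needs to be handled explicitly.
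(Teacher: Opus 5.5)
Your argument is correct and matches the paper's approach: the paper gives no separate proof and treats the statement as a direct unwinding of the definitions, which is what your filter-basis reduction does. The one tacit point, nonemptiness of the generating families (witnessed by $S=\varnothing$ and $N=0$), is harmless, and your treatment of the $[0,n]$ versus $\prod_{i=0}^{n-1}$ indexing is fine; in fact the two readings generate the same filter on finite subsets of $\N$.
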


\subsection{Basic objects of \texorpdfstring{$q$}{q}-theory and their functorial behavior}

We formalized the following $q$-integer, $q$-factorial, $q$-binomial coefficients, and $q$-Pochhammer symbol, as building blocks.

\begin{definition}
    The $q$-integer $[n]_q := \sum_{i=0}^{n-1} q^i$, where $n \in \N$ and $q \in R$ where $R$ is an arbitrary commutative semiring. Here we make the decision that $q$ would be an element of a ring-like object, rather than a formal variable.
\end{definition}

\begin{definition}
    The $q$-factorial $[n]_q ! := \prod_{i=1}^n [i]_q$, with the same assumptions as above.
\end{definition}

\begin{definition}
    The $q$-binomial coefficients, which we have chosen to define recursively to better adhere to Lean's internal structure:
    \begin{equation}\begin{cases}
      \qbinom{n}{0} & := 1 \\
      \qbinom{0}{k+1} & := 1 \\
      \qbinom{n+1}{k+1} & := \qbinom{n}{k} + q^{k+1} \qbinom{n}{k+1}
    \end{cases}\end{equation}
    with the same assumptions as above.
\end{definition}

\begin{definition}
    Now we assume $R$ is a commutative ring, because we require subtraction. Then, the finite $q$-Pochhammer symbol is $(a; q)_n := \prod_{i=0}^{n-1} (1 - aq^i)$.
\end{definition}

\begin{definition}
    Now additionally assume $R$ has a topology. Then, we define the infinite $q$-Pochhammer symbol $(a; q)_\infty$ to be the conditional infinite product $\prod_{i=0}^{\infty} (1 - aq^i)$. Note that if it exists, then it is equal to $\lim_{n \to \infty} (a; q)_n$. In compliance with mathlib's philosophy of junk values, the value is assigned to be $1$ if the product does not exist. Note that in actual applications in this paper, we can prove unconditional convergence of the product; see Lemma \ref{lem:poch-inf-exists}.
\end{definition}

For the above definitions, we have the following in Lean:

\begin{code}{RogersRamanujan/NumberTheory/QTheory/Defs.lean}
\kw{def} qInt \imp{R} [CommSemiring \var{R}] (\var{q} : \var{R}) (\var{n} : ℕ) : \var{R} :=
  ∑ \var{i} ∈ range \var{n}, \var{q} ^ \var{i}

\kw{def} qFactorial \imp{R} [CommSemiring \var{R}] (\var{q} : \var{R}) (\var{n} : ℕ) : \var{R} :=
  ∏ \var{i} ∈ range \var{n}, qInt \var{q} (\var{i} + \num{1})

\kw{def} qChoose \imp{R} [CommSemiring \var{R}] (\var{q} : \var{R}) : ℕ → ℕ → \var{R}
  | _, \num{0} => \num{1}
  | \num{0}, _ + \num{1} => \num{0}
  | \var{n} + \num{1}, \var{k} + \num{1} => qChoose \var{q} \var{n} \var{k} + \var{q} ^ (\var{k} + \num{1}) * qChoose \var{q} \var{n} (\var{k} + \num{1})

\kw{def} qPochhammer \imp{R} [CommRing \var{R}] (\var{a} \var{q} : \var{R}) (\var{n} : ℕ) : \var{R} :=
  ∏ \var{i} ∈ range \var{n}, (\num{1} - \var{a} * \var{q} ^ \var{i})

\kw{def} qPochhammerInf \imp{R} [TopologicalSpace \var{R}] [CommRing \var{R}] (\var{a} \var{q} : \var{R}) : \var{R} :=
  ∏'[conditional ℕ] \var{i}, (\num{1} - \var{a} * \var{q} ^ \var{i})
\end{code}

\begin{notation}
    Apart from the notation $(a; q)_n$ and $(a; q)_\infty$ for the finite and infinite q-Pochhammer symbol respectively, we also write $(a)_n$ and $(a)_\infty$ respectively when $q$ is clear.
\end{notation}

We make the same convenience in the Lean code, where the former notations are available upon \texttt{open QTheory}, and the latter upon \texttt{open QTheoryUnsafe}. They are called ``unsafe'' because using the name $q$ is inherently fragile.
Every object defined here is functorial. To be precise, we have the following.

\begin{theorem}
If $S$ is a commutative (semi)ring and $f: R \to S$ is a (semi)ring homomorphism, then we have the following four identities:
\begin{equation}
f([n]_q) = [n]_{f(q)}
\end{equation}
\begin{equation}
f([n]_q!) = [n]_{f(q)}!
\end{equation}
\begin{equation}
f\left(\qbinom{n}{r}\right) = \qbinom[f(q)]{n}{r}
\end{equation}
\begin{equation}
f\left((a;q)_n\right) = (f(a); f(q))_n
\end{equation}
\end{theorem}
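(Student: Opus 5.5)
The plan is to unfold each definition and push $f$ through using the fact that a (semi)ring homomorphism preserves $0$, $1$, addition, multiplication and natural-number powers. In the finite-product and finite-sum cases this extends to $f$ commuting with finite sums and finite products over \texttt{range}~$n$; in Lean these are the lemmas \texttt{map\_sum}, \texttt{map\_prod} and \texttt{map\_pow}. All four identities are then structural, and we prove them in the order stated, since the second depends on the first.

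\textbf{The $q$-integer.} Since $[n]_q=\sum_{i=0}^{n-1}q^i$, we compute
\[
f([n]_q)=\sum_{i<n}f(q^i)=\sum_{i<n}f(q)^i=[n]_{f(q)}.
\]

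\textbf{The $q$-factorial.} Since $[n]_q!=\prod_{i<n}[i+1]_q$, applying $f$ to each factor and using the first identity gives $f([n]_q!)=\prod_{i<n}[i+1]_{f(q)}=[n]_{f(q)}!$.

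\textbf{The $q$-Pochhammer symbol.} This needs $R$ and $S$ to be commutative rings, so that $f$ also preserves subtraction. Then
\[
f\bigl((a;q)_n\bigr)=\prod_{i<n}\bigl(1-f(a)f(q)^i\bigr)=(f(a);f(q))_n.
\]

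\textbf{The $q$-binomial coefficient.} This is the only case needing more than rewriting, because the coefficient is defined by recursion on two natural numbers. We argue by induction on $n$, generalizing over $k$, and split into the three defining cases.
\begin{enumerate}
\item If $k=0$, both sides equal $f(1)=1$.
\item If $n=0$ and $k\geq 1$, both sides equal the base value ($0$ in the Lean definition), and $f(0)=0$.
\item In the recursive case,
\[
f\Bigl(\qbinom{n+1}{k+1}\Bigr)=f\Bigl(\qbinom{n}{k}\Bigr)+f(q)^{k+1}\,f\Bigl(\qbinom{n}{k+1}\Bigr).
\]
The induction hypothesis at $n$, applied to both $k$ and $k+1$, turns the right-hand side into the defining recursion for $\qbinom[f(q)]{n+1}{k+1}$.
\end{enumerate}

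I expect the only real obstacle to be this binomial case. The induction hypothesis must be quantified over all $k$, because the recursion calls the same $n$ at two different values of $k$. The induction should follow the same recursion pattern as the definition (functional induction on \texttt{qChoose}), so that the defining equations unfold definitionally. Everything else is a direct one-line simplification.
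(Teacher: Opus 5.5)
Your proof is correct and is the natural argument. The paper states this functoriality without writing out a proof, and it amounts to what you describe: unfold the definitions, push $f$ through finite sums, products and powers (\texttt{map\_sum}, \texttt{map\_prod}, \texttt{map\_pow}), and for the recursively defined $q$-binomial coefficient do induction on $n$ generalized over $k$. You took the base value $\qbinom{0}{k+1}$ to be $0$ as in the Lean code, while the paper's displayed definition says $1$ (apparently a typo); either way the step is harmless, since $f$ preserves both $0$ and $1$.
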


We delay stating the functoriality of the infinite $q$-Pochhammer product $(a; q)_\infty$ until Lemma \ref{lem:poch-inf-func}.

\subsection{An exposition of strongly non-archimedean rings}
\label{sec:strong}

Since this is a new mathematical object, we briefly describe its definition and properties.

Recall that a non-archimedean ring is a topological ring such that the set of open additive subgroups forms a basis of the neighborhood filter of $0$. In other words, if $U$ is a neighborhood of $0$, then there is an open additive subgroup $V$ such that $V \subseteq U$.

\begin{definition}
    A \textbf{strongly non-archimedean ring} is a topological ring such that for any neighborhood $U$ of $0$, there is an open subrng $V$ such that $V \subseteq U$. A subrng (subring without identity) is a subset containing $0$ and is closed under addition, negation, and multiplication, but does not necessarily contain $1$.
\end{definition}

Examples of such rings are abundant.
\begin{example}
Any normed ring satisfying the ultrametric inequality, such as $\Q_p$, $R(\!(q)\!)$, is strongly non-archimedean.
More generally, our new construction of the ring of multivariate Laurent series is also strongly non-archimedean.
\end{example}

This corresponds to the following in Lean syntax:
\begin{code}{RogersRamanujan/Topology/MetricSpace/Ultra/Basic.lean}
\kw{instance} \imp{F} [NormedRing \var{F}] [IsUltrametricDist \var{F}] : StrongNonarchimedeanRing \var{F}
\end{code}

\begin{code}{RogersRamanujan/RingTheory/MvLaurentSeries/Basic.lean}
\kw{instance} \imp{σ R} [Ring \var{R}] : StrongNonarchimedeanRing (MvLaurentSeries \var{σ} \var{R})
\end{code}

\begin{example}
Since ideals are subrngs, any topological ring $R$ with an open subring $R_0$ such that $R$ is $R_0$-linearly topologized is strongly non-archimedean. Here $R_0$-linearly topologized means that open ideals of $R_0$ form a neighborhood basis of $0$.
So in particular, all Huber rings are strongly non-archimedean.
\end{example}

This corresponds to the following in Lean syntax:
\begin{code}{RogersRamanujan/Topology/Algebra/Nonarchimedean/Strong.lean}
\kw{instance} IsLinearTopology.instStrongNonarchimedeanRing \imp{R} [Ring \var{R}] [TopologicalSpace \var{R}]
  [IsTopologicalRing \var{R}] [IsLinearTopology \var{R} \var{R}] : StrongNonarchimedeanRing \var{R}
\end{code}

\begin{example}
An arbitrary product of strongly non-archimedean rings is strongly non-archimedean. This gives us the fact that not all strongly non-archimedean rings are Huber rings. See Section \ref{sec:strong-not-open} for more details.
\end{example}

A very useful property of complete non-archimedean rings $R$ is that for any sequence $f: \N \to R$, $\sum f$ converges iff $\lim_{n \to \infty} f(n) = 0$. Here we list useful properties of \textit{strongly non-archimedean rings} $R$ that are not enjoyed by all non-archimedean rings (see Theorem \ref{thm:not-strong}):

\begin{theorem}\label{thm:strong-prop}
    Let $(a_n)_{n \in \N}$ be a sequence in $R$.

    If $\lim_{n \to \infty} a_n = 0$, then $\lim_{n \to \infty} \prod_{i=0}^{n-1} a_i = 0$.

    As a very useful special case, if $q$ is topologically nilpotent, then $\lim_{n \to \infty} q^n = 0$, and since multiplication is continuous, we get $\lim_{n \to \infty} aq^n = 0$, and so by the previous property, we have $\lim_{n \to \infty} a^n q^{\binom{n}{2}} = 0$.

    Assuming $R$ is complete, if $\lim_{n \to \infty} a_n = 0$, then $\prod_n (1 + a_n)$ converges unconditionally.
\end{theorem}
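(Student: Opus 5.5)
The plan is to prove the three statements of Theorem~\ref{thm:strong-prop} in order, using the defining property of a strongly non-archimedean ring: every neighborhood $U$ of $0$ contains an open subrng $V$. For the first claim, fix a neighborhood $U$ of $0$ and choose an open subrng $V\subseteq U$. Since $a_n\to 0$, there is $N$ with $a_n\in V$ for all $n\ge N$. The head $P=\prod_{i<N}a_i$ is a fixed element. Continuity of $x\mapsto Px$ at $0$ gives a neighborhood $W$ of $0$ with $PW\subseteq U$, and we may shrink $W$ to an open subrng $V'\subseteq V\cap W$. Enlarging $N$ so that $a_n\in V'$ for $n\ge N$, the tail $\prod_{i=N}^{n-1}a_i$ lies in $V'$ for every $n>N$, because a subrng is closed under multiplication and we need no unit. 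Hence the full product lies in $PV'\subseteq U$. To avoid circularity, choose $N_0$ first, then $P$ depends on $N_0$; next pick $V'$ from $P$, then $N_1\ge N_0$ with $a_n\in V'$ for $n\ge N_1$. Write the product as $P\cdot\prod_{N_0\le i<n}a_i$, where the factors with $i\ge N_1$ lie in $V'$ and the finitely many factors with $N_0\le i<N_1$ are handled by absorbing them into the head. In other words, simply take $P=\prod_{i<N_1}a_i$, which is fine because $V'$ can be chosen with respect to a head of any length. The cleaner route is therefore: choose $W$ for the head of length $N$ where $N$ is defined via $V$, and then note that the tail lies in $V$ and not merely $V'$; so first shrink $U$ to $V$ and take $W$ with $P\cdot V\subseteq U$. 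That step is really the only subtle point, and it is resolved by using continuity of left multiplication by $P$ in the product topology, as follows. Pick $V$ open subrng with $PV\subseteq U$ (possible since $P\cdot 0=0$ and multiplication is continuous), then $N'\ge N$ large enough that the tail starting at $N'$ lies in $V$, and redefine the head to be $\prod_{i<N'}$. This changes $P$, but $P_{N'}=P_N\cdot\prod_{N\le i<N'}a_i$. So instead I would argue directly: for $n\ge N$, $\prod_{i<n}a_i = P_N\cdot t_n$ with $t_n\in V$ and $P_N$ fixed, where $V$ was chosen after $N$ inside the neighborhood $\{x: P_Nx\in U\}$, and $N$ only needs $a_n\in V$. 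To break the dependency, use $N$ from $a_n\to0$ applied to an arbitrary fixed neighborhood and note that the element $P_N$ depends only on $N$. Then choose $V$ and a further $M\ge N$ with $a_i\in V$ for $i\ge M$; the middle factors $a_N,\dots,a_{M-1}$ are bounded. I expect this bookkeeping to be the main obstacle. I would resolve it by proving the cleaner intermediate lemma that if $x_n\to 0$ and $c$ is fixed then $cx_n\to0$, applied with $x_n=\prod_{M\le i<n}a_i$ (which tends to $0$ because it eventually lies in any given subrng) and $c=\prod_{i<M}a_i$ for each fixed $M$. The key subtlety is that $x_n\to 0$ holds for the tail from any fixed start $M$ once we know $a_i\to 0$. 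Indeed, given a subrng $V$, all sufficiently late tails lie in $V$, but a tail starting at $M$ includes early factors, so again the reduction to a fixed head is needed. The clean statement is therefore: for each $U$, pick an open subrng $V$ with $a_iV\subseteq V$? This is not available in general, so the fixed-head argument is the one to use.

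The special case follows as stated. If $q$ is topologically nilpotent then $aq^n\to0$ by continuity, and applying the first claim to $a_i=aq^i$ gives $\prod_{i<n}aq^i=a^nq^{\binom n2}\to0$.

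For the third claim, the plan is to show that the finite partial products $\prod_{i\in T}(1+a_i)$ form a Cauchy net along finite sets $T$, and then conclude by completeness. Given a neighborhood $U$, I choose an open subrng $V\subseteq U$ and then $N$ with $a_i\in V$ for $i\ge N$. For a finite $T\supseteq[0,N)$, write
\[
\prod_{i\in T}(1+a_i)=\prod_{i<N}(1+a_i)\cdot\Bigl(1+\sum_{\varnothing\ne S\subseteq T\setminus[0,N)}\ \prod_{i\in S}a_i\Bigr).
\]
Each inner sum lies in $V$ because $V$ is a subrng. Consequently two such products differ by $P\cdot v$ with $P=\prod_{i<N}(1+a_i)$ fixed and $v\in V$. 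Handling the fixed factor $P$ requires exactly the same ``continuity of multiplication by a fixed element'' adjustment as in the first claim: first choose $W$ with $PW\subseteq U$, then choose the subrng inside $W$. Because $N$ is needed to define $P$, I would take $N$ from an initial subrng and accept that the differences $v$ lie in the final subrng. This works because enlarging $N$ changes the tail but keeps the head factors as elements of $1+V_0$. So the differences are products of at most finitely many fixed factors with elements of a subrng that can be made arbitrarily small. This gives the Cauchy property, and completeness then yields unconditional convergence.
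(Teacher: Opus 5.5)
Your reduction of the special case to the first claim is fine, and a Cauchy net plus completeness is the right frame for the third claim. But the first and third claims are not proved: the dependency you flag as the ``main obstacle'' is never broken, and it is the entire content of the theorem.

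The head $P=\prod_{i<N}a_i$ depends on $N$, hence on the subrng $V$. Yet getting $P$ times the tail into $U$ requires $V$ to be chosen after $P$. Each repair you try brings the dependency back:
\begin{itemize}
\item Enlarging $N$ changes $P$.
\item Bounding the fixed middle product $a_N\cdots a_{M-1}$ only trades it for a smaller neighborhood into which a further tail must fall. This is an infinite regress.
\item The lemma ``$x_n\to0$, $c$ fixed $\Rightarrow cx_n\to0$'' with $x_n=\prod_{M\le i<n}a_i$ needs $x_n\to0$, which is the original claim for the shifted sequence $(a_{M+k})_k$. Your justification (``it eventually lies in any given subrng'') is circular, since $x_n$ contains early factors that need not lie in that subrng.
\end{itemize}
In the third claim, ``the head factors are in $1+V_0$'' does not help either. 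For $w\in V_0$ and $v\in V_1$ one has $(1+w)v=v+wv$, where $wv$ is only known to lie in $V_0V_1\subseteq V_0$, so $P_0(1+w)v$ is not controlled. You would need $V_0V_1\subseteq V_1$, a linear-topology-type condition that strong non-archimedeanity does not provide.

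The missing idea is to demand smallness of only \emph{one} late factor relative to the head, and to use commutativity (assumed throughout the paper) to regroup. Given $U$, choose in this order:
\begin{itemize}
\item an open subrng $V\subseteq U$;
\item $N$ with $a_i\in V$ for $i\ge N$;
\item $P=\prod_{i<N}a_i$;
\item since $Pa_i\to0$, some $M\ge N$ with $Pa_i\in V$ for $i\ge M$.
\end{itemize}
There is no circularity in these choices. For $n>M$,
\[
\prod_{i<n}a_i=(Pa_{n-1})\cdot\prod_{N\le i<n-1}a_i\in V\subseteq U,
\]
where the second factor is either $1$ or lies in $V$.

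For the third claim, set $P'=\prod_{i<N}(1+a_i)$ and choose $M\ge N$ with $P'a_i\in V$ for $i\ge M$. Let $T\supseteq T'\supseteq[0,M)$ be finite. Then $\prod_{T'}(1+a_i)=P'(1+v)$ with $v\in V$, and
\[
\prod_{T}(1+a_i)-\prod_{T'}(1+a_i)=\sum_{\varnothing\ne S\subseteq T\setminus T'}(P'a_{j_S})(1+v)\prod_{i\in S\setminus\{j_S\}}a_i
\]
for any choice of $j_S\in S$. Every term lies in $V$ because $j_S\ge M$. Comparing arbitrary $T,T'\supseteq[0,M)$ through $T\cap T'$ gives the Cauchy property, and completeness yields the unconditional limit.
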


In Lean:

\begin{code}{RogersRamanujan/Topology/Algebra/Nonarchimedean/Strong.lean}
\kw{theorem} prod_tendsto_zero \imp{R} [CommRing \var{R}] [TopologicalSpace \var{R}] [StrongNonarchimedeanRing \var{R}]
  (\var{a} : ℕ → \var{R}) (\var{ha} : Tendsto \var{a} atTop (\nhds \num{0})) :
  Tendsto (∏ \var{i} ∈ range ·, \var{a} \var{i}) atTop (\nhds \num{0}) := \kw{by}
\end{code}

\begin{code}{RogersRamanujan/Topology/Algebra/Nonarchimedean/Strong.lean}
\kw{theorem} multipliable_one_add_of_tendsto_zero \imp{R} [CommRing \var{R}] [UniformSpace \var{R}]
  [CompleteSpace \var{R}] [StrongNonarchimedeanRing \var{R}] [IsUniformAddGroup \var{R}]
  \imp{a : ℕ → R} (\var{ha} : Tendsto \var{a} atTop (\nhds \num{0})) : Multipliable (\num{1} + \var{a} ·) :=
\end{code}

We then have:
\begin{lemma}[Existence of infinite $q$-Pochhammer symbol]\label{lem:poch-inf-exists}
    If $R$ is a complete strongly non-archimedean ring and $a, q \in R$ with $q$ topologically nilpotent, then $(a; q)_\infty := \prod_{n} (1 - aq^n)$ exists unconditionally.
\end{lemma}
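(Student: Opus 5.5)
The plan is to reduce the lemma directly to the third part of Theorem \ref{thm:strong-prop} (in Lean, \texttt{multipliable\_one\_add\_of\_tendsto\_zero}). Write $1 - aq^n = 1 + a_n$ with $a_n := -aq^n$. It then suffices to show that $a_n \to 0$. Completeness and the strongly non-archimedean hypothesis are exactly what that result needs to conclude that $\prod_n (1 + a_n)$ converges unconditionally. The lemma itself adds only the specialization to the $q$-Pochhammer factors.

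To see that $a_n \to 0$, first use that $q$ is topologically nilpotent, meaning by definition that $q^n \to 0$; this is the same fact invoked in the ``very useful special case'' of Theorem \ref{thm:strong-prop}. Next, left multiplication by the fixed element $-a$ is continuous in the topological ring $R$ and sends $0$ to $0$, so $-aq^n \to -a\cdot 0 = 0$. This is a one-line continuity argument (\texttt{Tendsto.const\_mul}), and no strong non-archimedean property is needed for it.

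Finally, apply the multipliability result to get that $(1 + a_n)_n = (1 - aq^n)_n$ is unconditionally multipliable. In Lean we must rewrite the function $n \mapsto 1 - aq^n$ as $n \mapsto 1 + (-aq^n)$ (via \texttt{sub\_eq\_add\_neg} and \texttt{neg\_mul}) so that it syntactically matches the form $(1 + a\,\cdot)$. If one also wants the conditional product in the definition of $(a;q)_\infty$ to agree with this unconditional product, note that unconditional convergence implies convergence along the conditional filter, since the conditional filter is finer. In a Hausdorff space the limits then coincide, so $(a;q)_\infty$ is not the junk value.

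The main obstacle is not in this lemma but upstream, in the multipliability theorem, whose proof we may assume. Within the lemma, the only delicate point is matching the form $1 - aq^n$ to $1 + a_n$ and confirming that the definition of topological nilpotence in use is literally $q^n \to 0$. If it is instead phrased via neighborhoods, we would first derive $q^n \to 0$ from it.
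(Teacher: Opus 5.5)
Your proposal is correct and matches the paper's proof. Topological nilpotence gives $q^n \to 0$, so $aq^n \to 0$ by continuity of multiplication, and the third part of Theorem~\ref{thm:strong-prop} (\texttt{multipliable\_one\_add\_of\_tendsto\_zero}) then gives unconditional convergence of $\prod_n (1 - aq^n)$. Your side remark, that unconditional convergence implies convergence along the finer conditional filter, is also correct but not needed for the lemma as stated.
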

\begin{proof}
    $\lim_{n \to \infty} q^n = 0$, so $\lim_{n \to \infty} aq^n = 0$, so by the third fact above, $\prod_{n} (1 - aq^n)$ converges unconditionally.
\end{proof}

In Lean:

\begin{code}{RogersRamanujan/NumberTheory/QTheory/StrongNonarchimedean.lean}
\kw{theorem} multipliable_one_sub_mul_pow \imp{R} [CommRing \var{R}] [UniformSpace \var{R}] [CompleteSpace \var{R}]
  [StrongNonarchimedeanRing \var{R}] [IsUniformAddGroup \var{R}]
  \imp{a q : R} (\var{hq} : IsTopologicallyNilpotent \var{q}) :
  Multipliable (\num{1} - \var{a} * \var{q} ^ ·) :=
\end{code}

\begin{lemma}[Functoriality of infinite $q$-Pochhammer symbol]\label{lem:poch-inf-func}
Let $R$ be a complete strongly non-archimedean ring, and let $S$ be a ring that is a $T_2$ space, and let $f: R \to S$ be a ring homomorphism that is continuous, and let $a, q \in R$ with $q$ topologically nilpotent. Then:
\begin{equation}
f\left( (a; q)_\infty \right) = (f(a); f(q))_\infty
\end{equation}
\end{lemma}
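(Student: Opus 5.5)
The plan is to reduce everything to the finite partial products, where functoriality is already known, and then pass to the limit using continuity of $f$ and uniqueness of limits in the Hausdorff space $S$.

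By Lemma \ref{lem:poch-inf-exists}, the product $\prod_n (1 - aq^n)$ converges unconditionally in $R$. Unconditional convergence implies convergence along the conditional summation filter, since the sets $[0,n]$ eventually contain any fixed finite set. Hence $(a;q)_\infty$, defined by the conditional product, is a genuine limit rather than the junk value:
\[
(a;q)_n \longrightarrow (a;q)_\infty \quad (n\to\infty).
\]

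Next apply $f$. Since $f$ is continuous, $f((a;q)_n) \to f((a;q)_\infty)$ in $S$. By the finite functoriality theorem, $f((a;q)_n) = (f(a);f(q))_n$. Therefore the partial products $\prod_{i<n}(1 - f(a)f(q)^i)$ converge in $S$ to $f((a;q)_\infty)$. This says exactly that $f((a;q)_\infty)$ is a conditional product of $i \mapsto 1 - f(a)f(q)^i$.

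To identify this with $(f(a);f(q))_\infty$, I use that the product is multipliable along the conditional filter on $S$. The \texttt{tprod} with the conditional filter therefore returns a limit of the partial products, not the junk value $1$. Since $S$ is $T_2$, limits are unique, so that value equals $f((a;q)_\infty)$. Note that no completeness or non-archimedean hypothesis on $S$ is needed: all convergence is transported from $R$ through $f$, which is why the hypotheses on $S$ are so weak.

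The main obstacle is the bookkeeping around the junk-value definition. One must show that multipliability on the $R$ side is with respect to the \emph{conditional} filter, i.e.\ that unconditional convergence implies conditional convergence. One must also make sure the existence witness on the $S$ side comes from the transported limit and not from any property of $S$ itself. Both are routine filter comparisons, but they are where care is needed. The key identity is simply
\[
f\Bigl(\lim_n (a;q)_n\Bigr)=\lim_n (f(a);f(q))_n.
\]
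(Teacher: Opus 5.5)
Your proof is correct and is essentially the intended argument; the paper gives no written proof here, only the Lean statement \texttt{map\_qPochhammerInf}, whose hypotheses match yours. You obtain existence from Lemma~\ref{lem:poch-inf-exists}, pass from unconditional to conditional convergence, push the partial products through the continuous homomorphism $f$ using finite functoriality, and conclude by uniqueness of limits in the $T_2$ space $S$ (and, as your argument implicitly shows, no $T_2$ hypothesis on $R$ is needed, since the conditional product in $R$ is by construction a limit of the partial products).
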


In Lean:

\begin{code}{RogersRamanujan/NumberTheory/QTheory/StrongNonarchimedean.lean}
\kw{theorem} map_qPochhammerInf \imp{R S F} [CommRing \var{R}] [UniformSpace \var{R}]
  [StrongNonarchimedeanRing \var{R}] [CompleteSpace \var{R}] [IsUniformAddGroup \var{R}]
  [CommRing \var{S}] [TopologicalSpace \var{S}] [T2Space \var{S}]
  [FunLike \var{F} \var{R} \var{S}] [RingHomClass \var{F} \var{R} \var{S}] (\var{f} : \var{F}) (\var{hf} : Continuous \var{f})
  (\var{a} : \var{R}) \imp{q : R} (\var{hq} : IsTopologicallyNilpotent \var{q}) :
  \var{f} (\var{a}; \var{q})_∞ = (\var{f} \var{a}; \var{f} \var{q})_∞ :=
\end{code}

\subsection{\texorpdfstring{$q$}{q}-binomial coefficients and finite \texorpdfstring{$q$}{q}-identities}

The identities in this section do not involve topology (i.e. arbitrary sum or product), so we can just assume that $R$ is a commutative ring. As a result, we can also treat each identity as taking place in the universal ring $\Z[\mathrm{vars}]$ where vars is the (finite) set of variables involved.

\begin{lemma}[$q$-Binomial theorem]
    \label{lem:binom-thm}
    \begin{equation}
        (a)_n = \sum_{k=0}^{n} \begin{bmatrix} n \\ k \end{bmatrix}_q (-a)^k q^{\frac{k(k-1)}{2}}  
    \end{equation}
\end{lemma}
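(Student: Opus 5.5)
Induct on $n$, using the recursive definition of $\qbinom{n}{k}$ rather than the factorial formula; this avoids any division and works in an arbitrary commutative ring. Equivalently, we prove the identity in the universal ring $\Z[a,q]$ and transport it by functoriality, but the direct induction is already ring-agnostic. The base case $n=0$ is immediate: $(a)_0 = 1$, and the sum has the single term $k=0$, equal to $\qbinom{0}{0}(-a)^0q^0 = 1$.

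For the inductive step we peel the \emph{first} factor rather than the last. Since the recursion $\qbinom{n+1}{k+1} = \qbinom{n}{k} + q^{k+1}\qbinom{n}{k+1}$ places the power of $q$ on the second term, the natural splitting is
\[
(a;q)_{n+1} = (1-a)\,(aq;q)_n ,
\]
which uses $\prod_{i=0}^{n}(1-aq^i) = (1-a)\prod_{i=0}^{n-1}(1-(aq)q^{i})$. For this to work the inductive hypothesis must hold for all $a$ simultaneously, so we generalize over $a$ before inducting. Applying the hypothesis with $aq$ in place of $a$ gives
\[
(a;q)_{n+1} = (1-a)\sum_{k=0}^{n}\qbinom{n}{k}(-a)^k q^{k}q^{\binom{k}{2}} .
\]

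Next, expand $(1-a)$ into two sums. In the first sum, $q^{k}q^{\binom k2}=q^{\binom{k+1}{2}}$. In the $-a$ sum, shift the index $k\mapsto k-1$, so the term becomes $\qbinom{n}{k-1}(-a)^{k}q^{\binom{k}{2}}$ for $1\le k\le n+1$. We compare coefficients of $(-a)^{k}q^{\binom k2}$ with the target $\sum_{k=0}^{n+1}\qbinom{n+1}{k}(-a)^kq^{\binom k2}$. For $k = j+1$ the first sum contributes $q^{j+1}\qbinom{n}{j+1}(-a)^{j+1}q^{\binom{j+1}{2}}$, since $\binom{j+1}{2} = \binom{j}{2}+j$. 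Let us recheck the exponent bookkeeping. The term of index $j$ in the first sum is $\qbinom{n}{j}(-a)^jq^{\binom{j+1}{2}} = q^{j}\qbinom{n}{j}(-a)^jq^{\binom j2}$. So the coefficient of $(-a)^kq^{\binom k2}$ in total is $q^k\qbinom{n}{k} + \qbinom{n}{k-1}$, which equals $\qbinom{n+1}{k}$ by the defining recursion. The edge cases are $k=0$, where the coefficient is $\qbinom n0=1=\qbinom{n+1}{0}$, and $k=n+1$, where $\qbinom{n}{n+1}=0$ by a small lemma that $\qbinom{n}{k}=0$ for $k>n$, proved by induction from the definition.

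The main obstacle is purely bookkeeping: aligning the ranges $\{0,\dots,n\}$ and $\{0,\dots,n+1\}$ after the shift, and handling the triangular exponent $\binom{k}{2}=k(k-1)/2$ with natural-number division. To make this manageable, we first restate everything with $\binom{k}{2}$ (Nat.choose) and the identity $\binom{k+1}{2}=\binom k2+k$. We then use \texttt{Finset.sum\_range\_succ} and \texttt{Finset.sum\_range\_succ'} to split off the extreme terms, after which the remaining sums match termwise via the recursion.
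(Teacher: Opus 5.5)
Your proof is correct, and it is the same induction on $n$ that the paper gives as its one-line proof. Generalizing over $a$ and peeling off the first factor $(1-a)$ lets the defining recursion $\qbinom{n+1}{k+1}=\qbinom{n}{k}+q^{k+1}\qbinom{n}{k+1}$ apply directly, and your vanishing lemma $\qbinom{n}{k}=0$ for $k>n$ correctly rests on the base case $\qbinom{0}{k+1}=0$ of the Lean definition (the value $1$ in the paper's displayed definition is a typo).
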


In Lean:

\begin{code}{RogersRamanujan/NumberTheory/QTheory/Basic.lean}
\kw{theorem} qPochhammer_eq_sum_qChoose \imp{R} [CommRing \var{R}] \imp{a q : R} \imp{n : ℕ} :
  (\var{a}; \var{q})_\var{n} = ∑ \var{k} ∈ range (\var{n} + \num{1}), qChoose \var{q} \var{n} \var{k} * (-\var{a}) ^ \var{k} * \var{q} ^ \var{k}.\prop{choose} \num{2} :=
\end{code}

\begin{proof}
    Induction on $n$.
\end{proof}

\begin{lemma}[$q$-Cauchy identity]
    \label{lem:binomial_cauchy}
    \begin{equation}
        (uv)_n = \sum_{k=0}^{n} \begin{bmatrix}
            n \\ k
        \end{bmatrix}_q (u)_k v^k (v)_{n-k}
    \end{equation}
\end{lemma}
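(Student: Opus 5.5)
Since the identity involves only finite sums and products, we prove it in an arbitrary commutative ring, equivalently in the universal ring $\Z[u,v,q]$. We argue by induction on $n$, using the recursive definition of the $q$-binomial coefficients, in the same spirit as the $q$-Binomial theorem (Lemma \ref{lem:binom-thm}). The base case $n=0$ is immediate: both sides equal $1$, because $\qbinom{0}{0}=1$ and $(u)_0=(v)_0=1$.

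For the inductive step we first peel off a factor on the left:
\[
(uv)_{n+1}=(uv)_n(1-uvq^n).
\]
It is convenient to use the recursion in its symmetric-looking form
\[
\qbinom{n+1}{k}=\qbinom{n}{k-1}+q^{k}\qbinom{n}{k}.
\]
This is exactly the paper's definition, and it holds for all $k\ge 0$ once we set $\qbinom{n}{-1}=0$. Substituting it into the right-hand side $\sum_k \qbinom{n+1}{k}(u)_k v^k (v)_{n+1-k}$ splits the sum into two pieces.

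\textbf{First piece.} Shift $k\mapsto k+1$, giving
\[
\sum_k \qbinom{n}{k}(u)_{k+1}v^{k+1}(v)_{n-k}.
\]
Then use $(u)_{k+1}=(u)_k(1-uq^k)$.

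\textbf{Second piece.} This is
\[
\sum_k \qbinom{n}{k}q^k(u)_kv^k(v)_{n+1-k}.
\]
Here use $(v)_{n+1-k}=(v)_{n-k}(1-vq^{n-k})$.

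Both pieces are now sums against the common term $\qbinom{n}{k}(u)_kv^k(v)_{n-k}$, so we can add them termwise. The bracketed multiplier on each term is
\[
v(1-uq^k)+q^k(1-vq^{n-k})=v-uvq^k+q^k-vq^n.
\]
To match the left side we want this multiplier to equal $1-uvq^n$ times something constant in $k$, and it does not. So the naive recursion does not close the induction directly.

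The fix is to use the \emph{other} Pascal recursion,
\[
\qbinom{n+1}{k}=q^{n+1-k}\qbinom{n}{k-1}+\qbinom{n}{k},
\]
which we must first derive. It follows from symmetry $\qbinom{n}{k}=\qbinom{n}{n-k}$, or equivalently by showing both recursions yield the closed form $(q)_n/((q)_k(q)_{n-k})$ in the universal ring. This suggests an alternative: do the induction by peeling off a factor of $(u)$ rather than a factor of $(uv)$. The cleanest concrete choice, which I would try first, is to pair the first recursion with the shifted term and the second with the unshifted one. With the second recursion, the combined multiplier becomes
\[
q^{n-k}v(1-uq^k)+(1-vq^{n-k})=1-uvq^n,
\]
which is independent of $k$ and matches the left side exactly. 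The induction hypothesis then finishes the step.

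The main obstacle is therefore establishing the second Pascal recursion (the symmetry of $\qbinom{n}{k}$) from the recursive definition. I would prove it by strong induction on $n$, showing that $\qbinom{n}{k}$ satisfies both recursions simultaneously. Alternatively, one can work over $\Z[q]$, where $(q)_n$ is a nonzerodivisor, verify the closed form there, and transport the result by functoriality. The remaining bookkeeping consists of boundary terms $k=0$ and $k=n+1$, where $\qbinom{n}{n+1}=0$, and reindexing; this is routine.
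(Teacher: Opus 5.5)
Your final argument is correct and is the paper's proof. Both argue by induction on $n$: peel off $(uv)_{n+1}=(uv)_n(1-uvq^n)$, then apply the second Pascal rule $\qbinom{n+1}{k+1}=q^{n-k}\qbinom{n}{k}+\qbinom{n}{k+1}$ together with $(a)_{m+1}=(a)_m(1-aq^m)$, so that the termwise multiplier collapses to $1-uvq^n$. The paper uses that second Pascal rule without proving it from the recursive definition, so your derivation of it (via symmetry, or via the closed form over $\Z[q]$ and functoriality) fills in a step the paper leaves implicit; the aside about peeling off a factor of $(u)$ is not needed.
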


In Lean:

\begin{code}{RogersRamanujan/NumberTheory/QTheory/Basic.lean}
\kw{theorem} qPochhammer_mul_eq_sum_qChoose \imp{R} [CommRing \var{R}] \imp{q : R} (\var{u} \var{v} : \var{R}) (\var{n} : ℕ) :
  (\var{u} * \var{v}; \var{q})_\var{n} = ∑ \var{k} ∈ range (\var{n} + \num{1}), qChoose \var{q} \var{n} \var{k} * (\var{u}; \var{q})_\var{k} * \var{v} ^ \var{k} * (\var{v}; \var{q})_(\var{n} - \var{k}) :=
\end{code}

\begin{proof}
    Induction on $n$, using
    $$
        \begin{bmatrix} n+1 \\ k+1 \end{bmatrix}_q = q^{n-k} \begin{bmatrix}
            n \\ k
        \end{bmatrix}_q + \begin{bmatrix}
            n \\ k + 1
        \end{bmatrix}_q
    $$
    and $(a)_{n+1} = (a)_n \cdot (1 - aq^{n})$.
\end{proof}

\begin{lemma}
    \label{lem:binomial_mul}
    For $n, k, s$ with $s \le k$, we have
    \begin{equation}
        \qbin{n}{k} \qbin{k}{s} = \qbin{n}{s} \qbin{n-s}{k-s}
    \end{equation}
\end{lemma}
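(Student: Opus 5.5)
The plan is to prove the identity first in the universal semiring $\N[q]$, and then transport it to an arbitrary commutative semiring $R$. For the transport we use the evaluation homomorphism $\N[q]\to R$, $q\mapsto q$, together with the functoriality of $\qbin{n}{k}$ stated earlier. Inside $\N[q]\subseteq \Z[q]$ we can cancel nonzero factors, because $\Z[q]$ is an integral domain. This avoids any division by $q$-factorials in $R$.

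\textbf{Step 1: the degenerate case $k>n$.} First I check by induction on $n$, using the recursive definition, that $\qbin{n}{k}=0$ whenever $k>n$. If $k>n$, the left-hand side then vanishes because of $\qbin{n}{k}$. Since $s\le k$ we also have $k-s>n-s$, so the right-hand side vanishes because of $\qbin{n-s}{k-s}$. From now on I assume $s\le k\le n$.

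\textbf{Step 2: the factorial identity.} I prove the standard identity
\[
\qbin{n}{k}\,[k]_q!\,[n-k]_q! = [n]_q! \qquad (k\le n)
\]
in any commutative semiring, by induction on $n$. The inductive step uses the defining recursion $\qbin{n+1}{k+1}=\qbin{n}{k}+q^{k+1}\qbin{n}{k+1}$ together with the elementary relation $[n+1]_q=[k+1]_q+q^{k+1}[n-k]_q$. The edge cases $k=0$ and $k=n$ are handled separately, because natural-number subtraction misbehaves there.

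\textbf{Step 3: the computation in $\Z[q]$.} I multiply both sides of the target by $P:=[s]_q!\,[k-s]_q!\,[n-k]_q!$.
\begin{itemize}
\item On the left, Step 2 applied to $\qbin{k}{s}$ absorbs $[s]_q![k-s]_q!$ into $[k]_q!$. Step 2 applied to $\qbin{n}{k}$ then gives $[n]_q!$.
\item On the right, $n-k=(n-s)-(k-s)$, so Step 2 turns $\qbin{n-s}{k-s}[k-s]_q![n-k]_q!$ into $[n-s]_q!$. Step 2 applied once more turns $\qbin{n}{s}[s]_q![n-s]_q!$ into $[n]_q!$.
\end{itemize}
Hence both sides times $P$ equal $[n]_q!$. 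Each $[i]_q$ with $i\ge1$ has constant term $1$, so it is nonzero in $\Z[q]$, and therefore $P\neq 0$. Cancelling $P$ in the domain $\Z[q]$ yields the identity in $\Z[q]$, hence in $\N[q]$. Applying the evaluation map and functoriality then gives the identity in $R$.

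\textbf{Main obstacle.} I expect the main obstacle to be bookkeeping rather than ideas. The delicate points are the natural-subtraction reindexing, such as $(n-s)-(k-s)=n-k$, and the edge cases in Step 2. If transporting from $\N[q]$ proves awkward, the fallback is a direct double induction on $n$ and $k$ using the Pascal-type recursions, though that is messier.
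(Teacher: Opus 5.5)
Your proposal is correct and is essentially the paper's proof. The paper invokes $\qbin{n}{k}=[n]_q!/([k]_q!\,[n-k]_q!)$ and checks that both sides reduce to the same ratio of $q$-factorials; your Steps 2--3 are the rigorous, division-free version of that check, clearing $P=[s]_q!\,[k-s]_q!\,[n-k]_q!$ and cancelling in the domain $\Z[q]$, which matches the universal-ring viewpoint the paper adopts for finite identities. One small repair in Step 1: if $s=k>n$, truncated subtraction gives $k-s=n-s=0$, so $k-s>n-s$ fails, and the right-hand side vanishes instead because $\qbin{n}{s}=0$ (split on $s>n$ versus $s\le n<k$).
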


In Lean:

\begin{code}{RogersRamanujan/NumberTheory/QTheory/Basic.lean}
\kw{theorem} qChoose_mul \imp{R} [CommRing \var{R}] (\var{q} : \var{R}) {\var{n} \var{k} \var{s} : ℕ} (\var{hsk} : \var{s} ≤ \var{k}) :
  qChoose \var{q} \var{n} \var{k} * qChoose q \var{k} \var{s} = qChoose \var{q} \var{n} \var{s} * qChoose q (\var{n} - \var{s}) (\var{k} - \var{s}) :=
\end{code}

\begin{proof}
    By the identity
    $$
        \begin{bmatrix} n \\ k \end{bmatrix}_q = \frac{[n]_q!}{[k]_q! [n-k]_q!},
    $$
    one can easily check that both sides are equal to $\frac{[n]_q!}{[n-k]_q! [k-s]_q!}$.
\end{proof}

The following lemma will be used to prove the $q$-Pfaff--Saalsch\"utz identity (see Theorem \ref{thm:qPS-cleared}).
\begin{lemma}
    \label{lem:binomial_cauchy_2}
    \begin{equation}
        (ac)_n (bc)_n = \sum_{k=0}^{n} \begin{bmatrix} n \\ k \end{bmatrix}_q (a)_k (b)_k c^k (c)_{n-k} (abcq^k)_{n-k}
    \end{equation}
\end{lemma}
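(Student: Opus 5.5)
The identity follows from \Cref{lem:binomial_cauchy} applied three times, with \Cref{lem:binomial_mul} used to reorganize a double sum. Everything is a finite identity in a commutative ring, so no topology enters. Besides the earlier lemmas, I will use the splitting rule $(x)_{r+s} = (x)_r\,(xq^r)_s$, which follows from the definition $(x)_n=\prod_{i<n}(1-xq^i)$, and the symmetry $\qbin{j}{k}=\qbin{j}{j-k}$ for $k\le j$.

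\emph{Step 1 (expand the last factor on the right).} Apply \Cref{lem:binomial_cauchy} to $(abcq^k)_{n-k}$ with $u=aq^k$ and $v=bc$, so that $uv=abcq^k$:
\[
(abcq^k)_{n-k}=\sum_{m=0}^{n-k}\qbin{n-k}{m}(aq^k)_m\,(bc)^m\,(bc)_{n-k-m}.
\]
By the splitting rule, $(a)_k(aq^k)_m=(a)_{k+m}$. The right-hand side of the lemma therefore becomes the double sum
\[
\sum_{k,m}\qbin{n}{k}\qbin{n-k}{m}(a)_{k+m}(b)_k\,c^{k+m}b^m\,(c)_{n-k}(bc)_{n-k-m}.
\]
Reindex by $j=k+m$, with $0\le k\le j\le n$. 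By \Cref{lem:binomial_mul} (with its $k,s$ replaced by $j,k$), $\qbin{n}{k}\qbin{n-k}{j-k}=\qbin{n}{j}\qbin{j}{k}$. The double sum becomes
\[
\sum_{j=0}^n\qbin{n}{j}(a)_j\,c^j\,(bc)_{n-j}\sum_{k=0}^{j}\qbin{j}{k}(b)_k\,b^{j-k}\,(c)_{n-k}.
\]

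\emph{Step 2 (evaluate the inner sum).} Split $(c)_{n-k}=(c)_{n-j}\,(cq^{n-j})_{j-k}$ and pull out $(c)_{n-j}$. Reversing the summation index $k\mapsto j-k$ and using the symmetry of $\qbin{j}{k}$, the remaining sum is
\[
\sum_{k}\qbin{j}{k}(cq^{n-j})_k\,b^k\,(b)_{j-k}.
\]
This is \Cref{lem:binomial_cauchy} with $u=cq^{n-j}$ and $v=b$, so it equals $(bcq^{n-j})_j$. Since $(bc)_{n-j}(bcq^{n-j})_j=(bc)_n$, the whole right-hand side reduces to
\[
(bc)_n\sum_{j=0}^n\qbin{n}{j}(a)_j\,c^j\,(c)_{n-j}.
\]

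\emph{Step 3 (finish).} A final application of \Cref{lem:binomial_cauchy} with $u=a$ and $v=c$ identifies the last sum with $(ac)_n$, which completes the proof.

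I expect the main obstacle, especially in Lean, to be the reindexing in Step 1. It converts a triangular double sum over $(k,m)$ into one over $(j,k)$ with $k\le j$, and it must carry along the natural-number subtractions $n-k-m=n-j$ and $m=j-k$. The reversal $k\mapsto j-k$ in Step 2 is similarly fiddly. In the formalization I would first prove the reindexing as a separate lemma about sums over $\{(k,m): k+m\le n\}$, for example via \texttt{Finset.sum\_sigma} together with a bijection. Only after that would I substitute the algebraic rewrites.
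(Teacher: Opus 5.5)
Your proof is correct and is essentially the paper's argument with the roles of $a$ and $b$ interchanged. The paper expands $(abcq^k)_{n-k}$ using $u=bq^k$, $v=ac$, regroups with $m=k+r$ and Lemma~\ref{lem:binomial_mul}, collapses the inner sum to $(acq^{n-m})_m$, and finishes with $\sum_m \qbin{n}{m}(b)_m c^m (c)_{n-m}=(bc)_n$; this mirrors your Steps 1--3, including the reversal $k\mapsto j-k$ and the symmetry of $\qbin{j}{k}$, which the paper also uses, though without saying so.
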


In Lean:

\begin{code}{RogersRamanujan/NumberTheory/QTheory/HypergeometricSeries.lean}
\kw{theorem} sum_qChoose_mul_qPochhammer_eq_qPochhammer_mul \imp{R} [CommRing \var{R}] (\var{a} \var{b} \var{c} \var{q} : \var{R}) (\var{n} : ℕ) :
  ∑ \var{k} ∈ Finset.range (\var{n} + \num{1}),
    qChoose \var{q} \var{n} \var{k} * (\var{a})_\var{k} * (\var{b})_\var{k} * \var{c} ^ \var{k} * (\var{c})_(\var{n} - \var{k}) * (\var{a} * \var{b} * \var{c} * \var{q} ^ \var{k})_(\var{n} - \var{k}) =
  (\var{a} * \var{c})_\var{n} * (\var{b} * \var{c})_\var{n} :=
\end{code}

\begin{proof}
    Applying Lemma \ref{lem:binomial_cauchy} to $u = bq^k$ and $v = ac$, we can write
    \begin{equation}
        \label{eqn:qPS-lemma}
        (abcq^k)_{n-k} = \sum_{r=0}^{n-k} \begin{bmatrix}
            n-k \\ r
        \end{bmatrix}_q (bq^k)_r (ac)^r (ac)_{n-k-r}
    \end{equation}
    and the right hand side becomes
    \begin{align*}
        &\sum_{k=0}^{n} \sum_{r=0}^{n-k} \qbin{n}{k} \qbin{n-k}{r} (a)_k (b)_k c^k (c)_{n-k} (bq^k)_r (ac)^r (ac)_{n-k-r}\\
        &=\sum_{k=0}^{n} \sum_{r=0}^{n-k} \qbin{n}{k+r}\qbin{k+r}{k} (a)_k (b)_{k+r} a^r c^{k+r} (c)_{n-k} (ac)_{n-k-r} \\
        &= \sum_{m=0}^{n} \qbin{n}{m} (b)_m c^m (c)_{n-m} (ac)_{n-m} \sum_{k=0}^{m} \qbin{m}{k} (a)_k a^{m-k} (cq^{n-m})_{m-k}.
    \end{align*}
    Here we rearranged the double sums, where $m = k + r$ for the second equality.
    We also used Lemma \eqref{lem:binomial_mul}, $(b)_k (bq^k)_r = (b)_{k+r}$ and $(c)_{n-k} = (c)_{n-m} (cq^{n-m})_{m-k}$.
    By Lemma \ref{lem:binomial_cauchy} to $u = cq^{n-m}$ and $v = a$, the inner sum becomes
    $$
        \sum_{k=0}^{m} \qbin{m}{k} (a)_k a^{m-k} (cq^{n-m})_{m-k} = (acq^{n-m})_m.
    $$
    Plugging this into the last sum and using $(ac)_{n-m}(acq^{n-m})_m = (ac)_n$ gives
    \begin{align*}
        (ac)_n \sum_{m=0}^{n} \qbin{n}{m} (b)_m c^m (c)_{n-m} = (ac)_n (bc)_n
    \end{align*}
    by Lemma \ref{lem:binomial_cauchy} again.
\end{proof}

\subsection{Infinite $q$-identities}

Theorems in this section \textit{do} involve topology, by using the infinite $q$-Pochhammer symbol. As a result, they should be assumed to be taking place in a complete strongly non-archimedean ring $R$, so that the infinite $q$-Pochhammer symbols exist by theorem \ref{lem:poch-inf-exists}, assuming $q$ is topologically nilpotent.

We introduce two lemmas to manipulate these objects.

\begin{lemma}[Shifting lemma]
\label{lem:shift}
\begin{equation}
(a; q)_\infty (a; q)_m^{-1} = (aq^m; q)_\infty
\end{equation}
\end{lemma}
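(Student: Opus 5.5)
The plan is to first prove the cleared form
\[
(a;q)_\infty = (a;q)_m\,(aq^m;q)_\infty,
\]
and then multiply both sides by $(a;q)_m^{-1}$. This last step is only meaningful when $(a;q)_m$ is invertible. With the junk-value convention of \texttt{bInv}, I expect the formal statement either carries an invertibility hypothesis or is stated in the cleared form. I would prove the cleared form as the main lemma and derive the displayed version as a corollary under $\mathrm{IsUnit}\,(a;q)_m$. Throughout I take $R$ to be a complete strongly non-archimedean ring that is Hausdorff, and $q$ topologically nilpotent, as in the standing assumptions of this subsection.

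\emph{Step 1 (finite identity).} For every $n\in\N$,
\[
(a;q)_{m+n}=(a;q)_m\,(aq^m;q)_n .
\]
To see this, split $\prod_{i<m+n}(1-aq^i)$ into the factors with $i<m$ and those with $i=m+j$, $j<n$. Then use $aq^{m+j}=(aq^m)q^j$. This is pure reindexing (\texttt{prod\_range\_add}) and involves no topology.

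\emph{Step 2 (existence).} By Lemma \ref{lem:poch-inf-exists}, both $(a;q)_\infty$ and $(aq^m;q)_\infty$ exist, since $q$ is topologically nilpotent. The second case is the lemma applied with $aq^m$ in place of $a$. In particular, the conditional products agree with the limits of the partial products:
\[
(a;q)_N\to(a;q)_\infty \quad\text{and}\quad (aq^m;q)_n\to(aq^m;q)_\infty .
\]
Here I use that the tprod under the conditional filter equals the limit of the partial products once convergence is known, so the junk value $1$ never appears.

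\emph{Step 3 (pass to the limit).} Let $n\to\infty$ in Step 1. The left side is a tail of the convergent sequence $N\mapsto(a;q)_N$, so it tends to $(a;q)_\infty$. The right side tends to $(a;q)_m(aq^m;q)_\infty$ by continuity of left multiplication by the constant $(a;q)_m$. Uniqueness of limits in a $T_2$ space gives the cleared identity. If $(a;q)_m$ is a unit, multiplying by \texttt{bInv} of it then yields the statement.

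I expect the main obstacle to be bookkeeping rather than mathematics. The work lies in matching the conditional-filter definition of \texttt{qPochhammerInf} with \texttt{Tendsto} of partial products, composing with the shift $n\mapsto m+n$ along \texttt{atTop}, and making sure Hausdorffness is available for uniqueness of limits. If $T_2$ is not among the typeclass assumptions, I would first try to obtain it from completeness plus a separation hypothesis. Failing that, I would add it explicitly, since it already appears in Lemma \ref{lem:poch-inf-func}.
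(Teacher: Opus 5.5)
Your proposal is correct and follows the paper's approach: the paper's one-line proof (``substitute the definitions'') is exactly your splitting $(a;q)_{m+n}=(a;q)_m(aq^m;q)_n$, followed by passing to the limit (existence from Lemma~\ref{lem:poch-inf-exists}, uniqueness of limits) and then cancelling $(a;q)_m$. The only difference is in the hypotheses: the Lean version assumes that $(a;q)_\infty$ is a unit rather than $(a;q)_m$, and by your cleared identity the former implies the latter, so your hypothesis is if anything weaker.
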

\begin{proof}
Substitute the definitions.
\end{proof}

In Lean:

\begin{code}{RogersRamanujan/NumberTheory/QTheory/StrongNonarchimedean.lean}
\kw{theorem} qPochhammerInf_shift_eq_bInv_qPochhammer_mul \imp{R} [CommRing \var{R}] [UniformSpace \var{R}]
  [CompleteSpace \var{R}] [StrongNonarchimedeanRing \var{R}] [IsUniformAddGroup \var{R}]
  \imp{a q : R} (\var{hq} : IsTopologicallyNilpotent \var{q}) (\var{ha} : IsUnit (\var{a}; \var{q})_∞) (\var{m} : ℕ) :
  (\var{a} * \var{q} ^ \var{m}; \var{q})_∞ = bInv (\var{a}; \var{q})_\var{m} * (\var{a}; \var{q})_∞ :=
\end{code}
\begin{lemma}[Splitting lemma]
\label{lem:split}
For any natural number $n \in \N$:
\begin{equation}(a; q)_\infty = (a; q^n)_\infty (aq; q^n)_\infty \cdots (aq^{n-1}; q^n)_\infty\end{equation}
\end{lemma}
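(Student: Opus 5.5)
The plan is to prove the splitting lemma at the level of finite products and then pass to the limit, working in a complete strongly non-archimedean ring $R$ with $q$ topologically nilpotent. Fix $n\ge 1$; for $n=0$ the right-hand side is an empty product, so we assume $n\ge1$ as intended.

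First, each factor on the right exists. Since $q$ is topologically nilpotent, so is $q^n$. Hence Lemma \ref{lem:poch-inf-exists} applied to $a q^j$ and $q^n$ shows that every $(aq^j;q^n)_\infty$, for $0\le j<n$, converges unconditionally. The same lemma gives unconditional convergence of $(a;q)_\infty$. In particular all of these are conditional limits as well, so
\[
(a;q)_\infty=\lim_{N\to\infty}(a;q)_N .
\]

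Next comes the finite identity. Every index $i\in[0,nN)$ is uniquely written as $i=nk+j$ with $0\le j<n$ and $0\le k<N$. Reindexing the finite product by this bijection between $\mathrm{range}(nN)$ and $\mathrm{range}(n)\times\mathrm{range}(N)$, and using $aq^{nk+j}=(aq^j)(q^n)^k$, gives
\[
(a;q)_{nN}=\prod_{j=0}^{n-1}(aq^j;q^n)_N .
\]
This is a purely algebraic identity in any commutative ring.

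Finally, let $N\to\infty$. The left side $(a;q)_{nN}$ is a subsequence of the convergent sequence $(a;q)_M$, so it tends to $(a;q)_\infty$. Each factor $(aq^j;q^n)_N$ on the right tends to $(aq^j;q^n)_\infty$. A finite product of convergent sequences converges to the product of the limits by continuity of multiplication, so the right side tends to $\prod_{j=0}^{n-1}(aq^j;q^n)_\infty$. Uniqueness of limits then finishes the proof. This requires $R$ to be Hausdorff, which holds because a complete uniform space as used in the paper is $T_2$; if it is not, we add that assumption.

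I expect the main obstacle to be the reindexing step in the formalization, namely building the bijection $i\mapsto(i\bmod n,\ i\div n)$ and using \texttt{Finset.prod\_product}. The identification of the limit along the subsequence $nN$ should be routine, via composing the tendsto with $N\mapsto nN\to\infty$.
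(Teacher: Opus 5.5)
Your proposal is correct and is essentially the paper's argument: the paper's proof is just ``substitute the definitions,'' and your reindexing $i=nk+j$ to get $(a;q)_{nN}=\prod_{j=0}^{n-1}(aq^j;q^n)_N$, followed by passing to the limit along $N\mapsto nN$, is exactly that argument spelled out. Your exclusion of $n=0$ matches the Lean hypothesis \texttt{hm}. One correction: completeness does not imply Hausdorffness. An indiscrete ring is complete and even strongly non-archimedean, yet not $T_2$. So uniqueness of limits is not automatic, and the $T_2$ assumption has to be added explicitly, as your fallback clause already anticipates.
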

\begin{proof}
Substitute the definitions.
\end{proof}

In Lean:

\begin{code}{RogersRamanujan/NumberTheory/QTheory/StrongNonarchimedean.lean}
\kw{theorem} qPochhammerInf_eq_prod_range \imp{R} [CommRing \var{R}] [UniformSpace \var{R}]
  [CompleteSpace \var{R}] [StrongNonarchimedeanRing \var{R}] [IsUniformAddGroup \var{R}]
  \imp{a q : R} \imp{m : ℕ} (\var{hm} : \var{m} ≠ \num{0}) (\var{hq} : IsTopologicallyNilpotent \var{q}) :
  (\var{a}; \var{q})_∞ = ∏ \var{j} ∈ Finset.range \var{m}, (\var{a} * \var{q} ^ \var{j}; \var{q} ^ \var{m})_∞ :=
\end{code}

To prove Bailey's lemma, one needs several transformation formulas for $q$-hypergeometric series, which we introduce here.

\begin{definition}[$q$-hypergeometric series]
Let $r, s \ge 0$. The $q$-hypergeometric series ${}_{r}\phi_{s}$ is defined as
\begin{equation}
    {}_{r}\phi_{s}\left(a_0, \dots, a_{r-1}; b_0, \dots, b_{s-1}; q, t\right) := \sum_{n=0}^{\infty} \frac{(a_0)_n \cdots (a_{r-1})_n}{(q)_n (b_0)_n \cdots (b_{s-1})_n} \left((-1)^n q^{\frac{n(n-1)}{2}}\right)^{s+1-r} t^n
\end{equation}
\end{definition}

In Lean:

\begin{code}{RogersRamanujan/NumberTheory/QTheory/HypergeometricSeries.lean}
\kw{def} qHypergeometricInner \imp{R} [CommRing \var{R}]
    (\var{q} : \var{R}) \imp{r s : ℕ} (\var{a} : Fin \var{r} → \var{R}) (\var{b} : Fin \var{s} → \var{R}) (\var{t} : \var{R}) (\var{e} \var{n} : ℕ) :=
  (∏ \var{i}, (\var{a} \var{i})_\var{n}) * bInv (\var{q})_\var{n} * (∏ \var{j}, bInv (\var{b} \var{j})_\var{n}) *
    ((-\num{1}) ^ \var{n} * \var{q} ^ \var{n}.\prop{choose} \num{2}) ^ \var{e} * \var{t} ^ \var{n}

\kw{def} qHypergeometric \imp{R} [CommRing \var{R}] [TopologicalSpace \var{R}]
    (\var{q} : \var{R}) \imp{r s : ℕ} (\var{a} : Fin \var{r} → \var{R}) (\var{b} : Fin \var{s} → \var{R}) (\var{t} : \var{R}) : \var{R} :=
  ∑' \var{n}, qHypergeometricInner \var{q} \var{a} \var{b} \var{t} (\var{s} + \num{1} - \var{r}) \var{n}
\end{code}

In this paper, we will mostly focus on the case when $r = s + 1$.
The most basic identity is the $q$-binomial series identity on ${}_{1}\phi_{0}$. But we first need a lemma:

\begin{lemma}[Infinite $q$-Binomial theorem]
    \label{lem:binom-thm-inf}\label{lem:poch-inf}
    Assume $(a)_\infty$ exists, i.e. $(a)_n$ converges as $n \to \infty$, and $q$ topologically nilpotent. Then:
    \begin{equation}
        (a)_\infty = \sum_{n=0}^\infty (q)_n^{-1} (-a)^n q^{\binom{n}{2}}
    \end{equation}
\end{lemma}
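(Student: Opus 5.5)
Our plan is to start from the finite $q$-binomial theorem (Lemma \ref{lem:binom-thm}) and let $n\to\infty$ term by term. We write
\[
(a)_n=\sum_{k=0}^{n}\qbin{n}{k}(-a)^kq^{\binom{k}{2}},\qquad \qbin{n}{k}=\frac{(q)_n}{(q)_k(q)_{n-k}},
\]
so that the $k$-th term is $c_k\,(q)_n(q)_{n-k}^{-1}$, where $c_k:=(q)_k^{-1}(-a)^kq^{\binom{k}{2}}$ is the $k$-th term of the target series. Inverses are understood via \texttt{bInv}. Since $q$ is topologically nilpotent, $(q)_k$ is a unit: each factor $1-q^{i}$ with $i\ge1$ has inverse $\sum_j q^{ij}$, which converges in the complete setting. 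To avoid divisions we would state the identity in cleared form, i.e. with $(q)_n\qbin{n}{k}$ written as a product of factors $(1-q^{j})$.

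The first key step is to rewrite the finite identity as
\[
(a)_n=\sum_{k=0}^{n}c_k\,\prod_{j=n-k+1}^{n}(1-q^{j}),
\]
using $(q)_n/(q)_{n-k}=\prod_{j=n-k+1}^{n}(1-q^j)$. We then estimate the difference between this and the partial sum $\sum_{k\le n}c_k$. The correction term for index $k$ is
\[
c_k\Bigl(\prod_{j=n-k+1}^{n}(1-q^j)-1\Bigr).
\]
Each such correction lies in the ideal-like neighbourhood generated by $q^{n-k+1}$.

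Next, fix an open subrng $V$ containing $0$, as provided by the strongly non-archimedean hypothesis. By Theorem \ref{thm:strong-prop}, $c_k\to0$: indeed $(-a)^kq^{\binom k2}\to0$, and $(q)_k^{-1}$ should stay bounded, being a product of units of the form $1+(\text{small})$. Choose $K$ with $c_k\in V$ for all $k\ge K$. We split the correction sum into two ranges.
\begin{itemize}
\item For $k\ge K$: $c_k\in V$ and the bracketed product is a sum of monomials in the $q^j$. We would arrange $V$ so that $V\cdot(\text{bounded})\subseteq V$; this is the delicate point.
\item For $k<K$: $n-k\to\infty$, so $\prod_{j>n-k}(1-q^j)-1\to0$, and finitely many terms then each tend to $0$.
\end{itemize}
It follows that $(a)_n-\sum_{k\le n}c_k\to0$. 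Since $(a)_n\to(a)_\infty$ by hypothesis, the series converges (equivalently $c_k\to0$ in a complete non-archimedean ring) and equals $(a)_\infty$.

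The main obstacle is the uniformity in the tail $k\ge K$. It requires showing that the products $\prod_j(1-q^j)$ and the inverses $(q)_k^{-1}$ lie in some bounded set $B$ with $V B\subseteq V$ for a suitable open subrng $V$. If this proves awkward, we would first try choosing $V$ to contain $q$ (after passing to a power) and working in the subrng generated by $1$ and $V$, which is closed under these products.

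A possible alternative avoids these estimates altogether. One proves the functional equation $(a)_\infty=(1-a)(aq)_\infty$, compares it with the corresponding recursion for the series, and uses uniqueness of solutions by the $q$-adic contraction $f(a)\mapsto f(aq)$. This route, however, needs $(aq)_\infty$ to exist as well.
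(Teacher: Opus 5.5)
Your proposal follows the paper's route: the paper likewise passes to the limit $n\to\infty$ in the finite $q$-binomial theorem using $\qbin{n}{k}\to(q)_k^{-1}$ and ``being careful with uniform convergence arguments'', and your split into $k<K$ and $k\ge K$ is exactly that care. Your fallback of putting $q$ into $V$ and working in $\Z+V$ does not close the delicate point, and the ``$1+(\text{small})$'' heuristic for $(q)_k^{-1}$ is also inaccurate, since $1-q^j$ for small $j$ need not lie in $\Z+V$; instead, replace an open subrng $U$ by $V=\{x\in U: q^jx\in U \text{ for all } j\ge 0\}$, which is again a subrng, is open (hence closed) by continuity of multiplication and $q^j\to0$, and satisfies $qV\subseteq V$, so $V$ is stable under $\Z[q]$ and under each $(1-q^i)^{-1}=\sum_{l\ge0}q^{il}$ with $i\ge1$, which gives both $c_k\to0$ and $c_k\bigl(\prod_{j=n-k+1}^{n}(1-q^j)-1\bigr)\in V$ for all $k\ge K$ and all $n\ge k$.
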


In Lean:

\begin{code}{RogersRamanujan/NumberTheory/QTheory/BinomialTheorem.lean}
\kw{def} qPochhammerInfInner \imp{R} [CommRing \var{R}] (\var{a} \var{q} : \var{R}) (\var{k} : ℕ) : \var{R} :=
  bInv (\var{q}; \var{q})_\var{k} * (\var{a} ^ \var{k} * \var{q} ^ \var{k}.\prop{choose} \num{2})

\kw{theorem} qPochhammerInf_eq_tsum \imp{R} [CommRing \var{R}] [UniformSpace \var{R}]
  [IsUniformAddGroup \var{R}] [StrongNonarchimedeanRing \var{R}] [CompleteSpace \var{R}] [T2Space \var{R}]
  \imp{a q : R} (\var{hq} : IsTopologicallyNilpotent \var{q} := \kw{by} \kw{simp}) :
  (\var{a}; \var{q})_∞ = ∑' \var{k}, qPochhammerInfInner (-\var{a}) \var{q} \var{k} :=
\end{code}
\begin{proof}
    Take limit as $n \to \infty$ of both sides of \ref{lem:binom-thm}, noting that:
    \begin{equation}
        \lim_{n \to \infty} \qbin{n}{k} = (q)_k^{-1}
    \end{equation}
    while being careful with uniform convergence arguments, and then rename $k$ to $n$.
\end{proof}

We can use Lemma \ref{lem:binom-thm-inf} to prove the following identity for ${}_{1}\phi_{0}$.
\begin{theorem}[$q$-Binomial Series Identity]
    \label{thm:q-binomial-series}
    \begin{equation}
    \label{eqn:q-binomial-series}
        {}_{1}\phi_{0}(a;q,t) := \sum_{n=0}^{\infty} \frac{(a)_n}{(q)_n} t^n = \frac{(at)_\infty}{(t)_\infty}
    \end{equation}
\end{theorem}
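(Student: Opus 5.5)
We prove the equivalent cleared form
\[
(t)_\infty \sum_{n\ge 0}\frac{(a)_n}{(q)_n}t^n=(at)_\infty ,
\]
working in a complete strongly non-archimedean ring $R$ with $q$ and $t$ topologically nilpotent. This avoids inverting $(t)_\infty$; we assume $(q)_n$ is a unit for each $n$, as in the definition via \texttt{bInv}. Under these hypotheses both infinite products exist by Lemma \ref{lem:poch-inf-exists}. The series converges because $t^n\to 0$ while $(a)_n/(q)_n$ stays in a bounded set, so its terms tend to $0$. We choose the cleared form because the statement with division only makes sense when $(t)_\infty$ is invertible, and that can be added as a corollary.

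The plan is to expand $(t)_\infty$ by Lemma \ref{lem:binom-thm-inf}:
\[
(t)_\infty=\sum_{m\ge0}\frac{(-t)^m q^{\binom m2}}{(q)_m}.
\]
We then form the Cauchy product with $\sum_n \frac{(a)_n}{(q)_n}t^n$. The coefficient of $t^N$ in the product is
\[
\frac{1}{(q)_N}\sum_{n=0}^N \qbin{N}{n}(a)_n(-1)^{N-n}q^{\binom{N-n}{2}}.
\]
We must show this equals $\frac{(-a)^N q^{\binom N2}}{(q)_N}$, which is the $N$-th coefficient of $(at)_\infty$ by Lemma \ref{lem:binom-thm-inf} applied with $a\mapsto at$. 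This is a finite identity in the universal ring $\Z[a,q]$ after clearing $(q)_N$.

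To prove it, we apply Lemma \ref{lem:binomial_cauchy} with $u=a$ and $v=q^{-?}$. Cleaner is the $q$-Cauchy identity specialised at $v$ such that $(v)_{N-n}$ collapses. Concretely, we set $v=0$ in Lemma \ref{lem:binomial_cauchy}, which gives nothing useful. So instead we write $(-1)^{k}q^{\binom k2}=(q^{-?})$-type terms as the $k$-th term of the $q$-binomial theorem (Lemma \ref{lem:binom-thm}) and invert: Gaussian inversion states that $\sum_{n}\qbin{N}{n}(-1)^{N-n}q^{\binom{N-n}2}f(n)$ inverts $f(N)=\sum_k\qbin Nk g(k)$. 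Here $(a)_n=\sum_k\qbin nk(-a)^kq^{\binom k2}$ by Lemma \ref{lem:binom-thm}. Substituting, swapping sums, and using Lemma \ref{lem:binomial_mul} reduces the inner sum to $\sum_j \qbin{N-k}{j}(-1)^{j}q^{\binom j2}=(1;q)_{N-k}$, again by Lemma \ref{lem:binom-thm}. This vanishes unless $k=N$, leaving exactly $(-a)^Nq^{\binom N2}$. The reindexing uses $\binom{N-n}{2}$ versus $\binom{j}{2}$ and needs care with the signs, but it is routine.

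The main obstacle is analytic: justifying the Cauchy product of two convergent series in $R$ and identifying the coefficient of $t^N$. In a complete strongly non-archimedean ring, the double family $\frac{(-t)^mq^{\binom m2}}{(q)_m}\cdot\frac{(a)_n t^n}{(q)_n}$ tends to $0$ along the cofinite filter on $\N^2$, since $t^{m+n}\to0$ and the remaining factors lie in bounded sets. It is therefore unconditionally summable, and can be resummed along the antidiagonals $m+n=N$. That gives the product identity termwise; the finite identity above then finishes the argument.
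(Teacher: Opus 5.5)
Your argument is correct, but it runs the Cauchy product in the opposite direction from the paper.

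\textbf{Your route.} You multiply the expansion of $(t)_\infty$ from Lemma~\ref{lem:binom-thm-inf} against the ${}_1\phi_0$ series. Each antidiagonal then requires the $q$-binomial inversion $\sum_{n=0}^{N}\qbin{N}{n}(a)_n(-1)^{N-n}q^{\binom{N-n}{2}}=(-a)^Nq^{\binom N2}$. Your derivation of it checks out: Lemmas~\ref{lem:binom-thm} and~\ref{lem:binomial_mul}, the symmetry $\qbin{M}{i}=\qbin{M}{M-i}$ (not stated in the paper, but immediate from the factorial formula), and $(1;q)_{M}=\delta_{M,0}$.

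\textbf{The paper's route.} It multiplies the expansion of $(at)_\infty$ by $\sum_m t^m/(q)_m$. The antidiagonal sum is then $\frac{1}{(q)_n}\sum_k\qbin{n}{k}(-a)^kq^{\binom k2}=\frac{(a)_n}{(q)_n}$, which is literally Lemma~\ref{lem:binom-thm}, so no inversion is needed. The price is that it must then identify $\sum_m t^m/(q)_m$ with $(t)_\infty^{-1}$. It does this by setting $a=1$ in its own identity, which yields Lemma~\ref{lem:poch-inf-inv} for later use in Lemma~\ref{lem:jtp-a-q}.

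\textbf{What your route buys.} Your cleared form needs no inverse. The invertibility you defer to ``a corollary'' falls out of your identity at $a=0$: $(t)_\infty\sum_n t^n/(q)_n=(0;q)_\infty=1$. This gives the unit property needed for the quotient (and for \texttt{bInv}), and recovers Lemma~\ref{lem:poch-inf-inv} as well. So you trade a harder finite identity for a cleaner logical order.

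\textbf{Analytic difference.} Your double family contains $(a)_n$, so you must actually prove that $\{(a)_n\}$ is bounded. This holds, but it genuinely needs the strong hypothesis; it fails in the ring of Theorem~\ref{thm:no-poch-inf}. The paper's family contains $a^kq^{\binom k2}$ instead, which tends to $0$ by Theorem~\ref{thm:strong-prop}. There, convergence of the ${}_1\phi_0$ series comes for free from the regrouping rather than being a separate input. No assumption on $(q)_n$ is needed in either route, since each $1-q^i$ with $i\ge 1$ is inverted by a geometric series.

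Finally, drop the abandoned attempts with $v=q^{-?}$ and $v=0$ from the write-up.
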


In Lean:

\begin{code}{RogersRamanujan/NumberTheory/QTheory/BinomialTheorem.lean}
\kw{theorem} qPochhammerInf_div_qPochhammerInf_eq_tsum \imp{R} [CommRing \var{R}] [UniformSpace \var{R}]
  [IsUniformAddGroup \var{R}] [StrongNonarchimedeanRing \var{R}] [CompleteSpace \var{R}] [T2Space \var{R}]
  \imp{a q z : R} (\var{hz} : IsTopologicallyNilpotent \var{z} := \kw{by} \kw{simp})
  (\var{hq} : IsTopologicallyNilpotent \var{q} := \kw{by} \kw{simp}) :
  ((\var{a} * \var{z}); \var{q})_∞ * bInv ((\var{z}; \var{q})_∞) = ∑' \var{n}, (\var{a}; \var{q})_\var{n} * bInv (\var{q}; \var{q})_\var{n} * \var{z} ^ \var{n} :=
\end{code}
\begin{proof}
    The standard proof over $\mathbb{C}$ shows that the middle and the last term of \eqref{eqn:q-binomial-series} satisfy the same functional equation and agree at $t = 0$, to conclude that they are the same by complex analysis (e.g. see \cite[p. 10]{Andrews1986}).
    In our case (for more general ring $R$), we first use the infinite $q$-Binomial theorem (Lemma \ref{lem:binom-thm-inf}) for $at$:
    \begin{equation}
        (at)_\infty = \sum_{n=0}^\infty (q)_n^{-1} (-at)^n q^{\binom{n}{2}}
    \end{equation}
    Then, multiply $\sum_{m=0}^{\infty} (q)_m^{-1} t^m$ to both sides:
    \begin{equation}
        (at)_\infty \sum_{m=0}^{\infty} (q)_m^{-1} t^m = \left( \sum_{k=0}^\infty (q)_k^{-1} (-at)^k q^{\binom{k}{2}} \right) \left( \sum_{m=0}^{\infty} (q)_m^{-1} t^m \right)
    \end{equation}
    Expanding the double-sum, and grouping by $n := k+m$:
    \begin{equation}
        (at)_\infty \sum_{m=0}^{\infty} (q)_m^{-1} t^m = \sum_{n=0}^\infty (q)_n^{-1} t^n \sum_{k=0}^n \qbin{n}{k} (-a)^k q^{\binom{k}{2}}
    \end{equation}
    Then simplifying using the finite $q$-binomial theorem (\ref{lem:binom-thm}):
    \begin{equation}
        (at)_\infty \sum_{m=0}^{\infty} (q)_m^{-1} t^m = \sum_{n=0}^\infty \frac{(a)_n}{(q)_n} t^n
    \end{equation}
    Now substituting $a=1$ gives:
    \begin{equation}
        (t)_\infty \sum_{m=0}^{\infty} (q)_m^{-1} t^m = 1
    \end{equation}
    which gives us:
    \begin{equation}
    \label{eqn:poch-inv}
        (t)_\infty^{-1} = \sum_{m=0}^{\infty} (q)_m^{-1} t^m
    \end{equation}
    So in conclusion, we have
    \begin{equation}
        \frac{(at)_\infty}{(t)_\infty} = \sum_{n=0}^\infty \frac{(a)_n}{(q)_n} t^n
    \end{equation}
    as required.
\end{proof}

We extract Equation \ref{eqn:poch-inv} above as a lemma:
\begin{lemma}[Expansion of inverse infinite $q$-Pochhammer symbol]
    \label{lem:poch-inf-inv}
    If $a$ and $q$ are both topologically nilpotent, then we have
    \begin{equation}
        (a)_\infty^{-1} = \sum_{m=0}^{\infty} (q)_m^{-1} a^m
    \end{equation}
\end{lemma}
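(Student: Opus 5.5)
The plan is to extract the argument already given in the proof of Theorem \ref{thm:q-binomial-series}, specialised to the value $1$ of the numerator parameter, while checking the convergence of each step in a complete strongly non-archimedean ring $R$. Write $S := \sum_{m\ge 0} (q)_m^{-1} a^m$, interpreting $(q)_m^{-1}$ as $\texttt{bInv}\,(q)_m$ as in the Lean statements. The goal is to show that $(a)_\infty \cdot S = 1$. Since $R$ is commutative, this shows that $(a)_\infty$ is a unit with inverse $S$, and so $\texttt{bInv}\,(a)_\infty = S$.

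The first step is convergence. Both series $\sum_k (q)_k^{-1}(-a)^k q^{\binom k2}$ and $S$ have terms tending to $0$, because $a^m\to 0$ and $(q)_m^{-1}$ is a fixed coefficient times a power of a topologically nilpotent element. I am not fully sure the coefficients $(q)_m^{-1}$ stay bounded, and this needs checking. The robust route is to note that in a non-archimedean complete ring a series converges as soon as its terms tend to $0$. To get the terms to tend to $0$, I would avoid $\texttt{bInv}$ junk values and work with the universal case where $(q)_m$ is genuinely invertible. Failing that, I would use Lemma \ref{lem:binom-thm-inf}, which already gives convergence of the first series as a \texttt{tsum} equal to $(a)_\infty$; its hypotheses hold by Lemma \ref{lem:poch-inf-exists}.

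Next I form the Cauchy product of the two series and group the terms by $n=k+m$. This gives
\begin{equation*}
(a)_\infty\, S=\sum_{n\ge0} (q)_n^{-1}a^n\sum_{k=0}^n \qbin{n}{k}(-1)^k q^{\binom k2},
\end{equation*}
where I use $(q)_n^{-1}\qbin{n}{k}=(q)_k^{-1}(q)_{n-k}^{-1}$. By Lemma \ref{lem:binom-thm} with $a=1$, the inner sum equals $(1;q)_n$. This vanishes for $n\ge1$ and equals $1$ for $n=0$, so the whole sum collapses to $1$.

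I expect the main obstacle to be justifying the Cauchy product rearrangement in this generality. The approach I would try first is the non-archimedean principle that if $x_k\to0$ and $y_m\to0$, then the family $x_ky_m$ indexed by $\N^2$ is unconditionally summable. Its sum then equals the product of the two sums, and it can be regrouped along the antidiagonals $k+m=n$. This should follow from the existence of open additive subgroups $V$ with $x_ky_m\in V$ outside a finite set, using continuity of multiplication. A second technical point is the identity $(q)_n^{-1}\qbin{n}{k}=(q)_k^{-1}(q)_{n-k}^{-1}$ when the inverses are \texttt{bInv}. If $(q)_n$ is a unit, then so are its factors $(q)_k$ and $(q)_{n-k}$, and the identity follows from $\qbin{n}{k}(q)_k(q)_{n-k}=(q)_n$. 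In the non-unit case I would instead require units, or argue through the universal ring.
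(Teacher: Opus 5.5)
This is the paper's own argument: the lemma is read off from the proof of Theorem~\ref{thm:q-binomial-series}. That proof multiplies the expansion of $(at)_\infty$ by $\sum_m (q)_m^{-1}t^m$, regroups the Cauchy product along $k+m=n$, applies the finite $q$-binomial theorem and sets the numerator parameter to $1$, exactly as you do, and the paper is equally silent on convergence. The convergence of $S$ that you left open holds and needs no universal ring. Each $1-q^i$ is already a unit with inverse $\sum_n q^{in}$, so \texttt{bInv} never takes a junk value here. If $W$ is an open subrng with $q^k\in W$ for $k\ge K$, then $V=\{x\in W : q^kx\in W \text{ for all } k<K\}$ is an open, hence closed, additive subgroup with $qV\subseteq V$. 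So $(1-q^i)^{-1}x=\sum_n q^{in}x\in V$ for $x\in V$, whence $(q)_m^{-1}V\subseteq V$ for all $m$ and $(q)_m^{-1}a^m\in V\subseteq W$ as soon as $a^m\in V$.
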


In Lean:

\begin{code}{RogersRamanujan/NumberTheory/QTheory/BinomialTheorem.lean}
\kw{theorem} bInv_qPochhammerInf_eq_tsum \imp{R} [CommRing \var{R}] [UniformSpace \var{R}]
  [IsUniformAddGroup \var{R}] [StrongNonarchimedeanRing \var{R}] [CompleteSpace \var{R}] [T2Space \var{R}]
  \imp{a q : R} (\var{ha} : IsTopologicallyNilpotent \var{a} := \kw{by} \kw{simp})
  (\var{hq} : IsTopologicallyNilpotent \var{q} := \kw{by} \kw{simp}) :
  bInv ((\var{a}; \var{q})_∞) = ∑' \var{n}, bInv (\var{q}; \var{q})_\var{n} * \var{a} ^ \var{n} :=
\end{code}

Another important identity is the following Heine's identity for ${}_{2}\phi_{1}$.
\begin{theorem}[Heine]
\label{thm:heine}
    For $x, y, a, t, q \in R$ with $y, t, q$ topologically nilpotent, we have
    \begin{equation}
        {}_{2}\phi_{1}(x,y;ay;q,t) = \frac{(y)_\infty(xt)_\infty}{(ay)_\infty(t)_\infty} {}_{2}\phi_{1}(a, t;xt;q,y). \label{eqn:heine'}
    \end{equation}
    In particular, when $y$ is invertible, taking $z = ay$ gives
    \begin{equation}
        {}_{2}\phi_{1}(x, y; z; q, t) = \frac{(y)_\infty (xt)_\infty}{(z)_\infty (t)_\infty} {}_{2}\phi_{1}\left(\frac{z}{y}, t ; xt; q, y\right).
        \label{eqn:heine}
    \end{equation}
\end{theorem}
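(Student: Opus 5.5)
We follow Heine's classical argument: expand one factor of the summand as a series in a new index, interchange the two sums, and resum using the $q$-binomial series identity (\Cref{thm:q-binomial-series}). Write
\[
{}_{2}\phi_{1}(x,y;ay;q,t) = \sum_{n\ge 0} \frac{(x)_n (y)_n}{(q)_n (ay)_n} t^n .
\]

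The first step uses the shifting lemma (\Cref{lem:shift}) in the form
\[
\frac{(y)_n}{(ay)_n} = \frac{(y)_\infty}{(ay)_\infty}\cdot\frac{(ayq^n)_\infty}{(yq^n)_\infty},
\]
which needs $(y)_\infty$ and $(ay)_\infty$ to be invertible. Since $y$ is topologically nilpotent, the factors $1-yq^j$ and $1-ayq^j$ are $1$ plus topologically nilpotent elements. Invertibility then follows from \Cref{lem:poch-inf-inv} applied to $(y)_\infty$; for $(ay)_\infty$, note that $ay$ is also topologically nilpotent, since $a\cdot y^n\to 0$ by continuity of multiplication and hence $(ay)^n\to0$ in a strongly non-archimedean ring by \Cref{thm:strong-prop}.

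Next, apply \Cref{thm:q-binomial-series} with parameter $a$ and variable $yq^n$, which is topologically nilpotent:
\[
\frac{(ayq^n)_\infty}{(yq^n)_\infty} = \sum_{m\ge0} \frac{(a)_m}{(q)_m} y^m q^{nm}.
\]
Substituting gives the double series
\[
\frac{(y)_\infty}{(ay)_\infty}\sum_{n}\sum_m \frac{(x)_n (a)_m}{(q)_n (q)_m} t^n y^m q^{nm}.
\]
Swapping the order and using \Cref{thm:q-binomial-series} again, now with parameter $x$ and variable $tq^m$, the inner sum over $n$ becomes $(xtq^m)_\infty/(tq^m)_\infty$. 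Applying \Cref{lem:shift} once more,
\[
\frac{(xtq^m)_\infty}{(tq^m)_\infty} = \frac{(xt)_\infty}{(t)_\infty}\cdot\frac{(t)_m}{(xt)_m}.
\]
This produces exactly $\frac{(y)_\infty(xt)_\infty}{(ay)_\infty(t)_\infty}\,{}_{2}\phi_{1}(a,t;xt;q,y)$.

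Care is needed with \texttt{bInv} here. The identity $\frac{(xtq^m)_\infty}{(tq^m)_\infty}=\frac{(xt)_\infty (t)_m}{(t)_\infty (xt)_m}$ holds with junk inverses only if $(xt)_m$ is a unit. In the Lean statement the ${}_{2}\phi_{1}$ already uses $\operatorname{bInv}((xt)_m)$, so the better route is to avoid dividing by $(xt)_m$ altogether. One writes
\[
(xtq^m)_\infty\,(t)_\infty^{-1} = (xt)_\infty\,(xt)_m^{-1}\,(t)_\infty^{-1}(t)_m
\]
only when $(xt)_m$ is a unit, and otherwise checks the equation in the universal setting. The simplest fix is to prove the identity first under the hypothesis that $x$ is topologically nilpotent or $(xt)_m$ is a unit. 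However, since $t$ is topologically nilpotent, $xt$ is as well, so every $1-xtq^j$ is a unit and the issue disappears. The same remark covers $(ay)_n$.

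The second variant, \eqref{eqn:heine}, follows by substituting $a=z/y$, i.e.\ $a=zy^{-1}$, so that $ay=z$.

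The main obstacle is justifying the interchange of the double sum $\sum_n\sum_m$. In a complete non-archimedean Hausdorff ring, I would show that the family $c_{n,m}=\frac{(x)_n(a)_m}{(q)_n(q)_m}t^ny^mq^{nm}$ tends to $0$ along the cofinite filter on $\N\times\N$. Then it is unconditionally summable, and Fubini for unconditional sums (\texttt{HasSum.prod\_fiberwise} / \texttt{tsum\_comm'} in mathlib) lets us swap. The decay argument is as follows. The factors $(x)_n$, $(a)_m$, $(q)_n^{-1}$ and $(q)_m^{-1}$ lie in a bounded set: each is a finite product of elements of the form $1+(\text{element of an open subrng }V)$, or the inverse of one, and in the strongly non-archimedean setting these stay in $1+V$, or in a fixed set $S$ with $S\cdot V\subseteq U$. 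Meanwhile $t^n y^m\to0$ as $n+m\to\infty$, because both $t$ and $y$ are topologically nilpotent; this uses the open subrng $V$ from the definition to absorb products. Making this "bounded times nilpotent" argument precise is where I expect most of the work.
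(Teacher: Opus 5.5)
Your overall route is the paper's. The paper also uses the $q$-binomial series to turn $\frac{(ay)_\infty}{(y)_\infty}\,{}_{2}\phi_{1}(x,y;ay;q,t)$ into the double sum $\sum_{n,m}\frac{(x)_n(a)_m}{(q)_n(q)_m}t^ny^mq^{nm}$, and then reads off the other side from the sum's symmetry under $(x,t)\leftrightarrow(a,y)$; that is your interchange-and-resum step. Your plan for justifying the interchange is fine in outline: bounded coefficients (include $q^{nm}$ among them) times $t^ny^m\to 0$ along the cofinite filter, then Fubini for unconditional sums.

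The genuine error is in how you discharge the invertibility side conditions. You claim $ay$ is topologically nilpotent because $y$ is, and likewise $xt$ because $t$ is. This is false. In a strongly non-archimedean ring the topologically nilpotent elements need not form an ideal; the paper deliberately allows rings such as $\Q_p$ and Laurent series, where they do not. In $\Q_p$ with $y=p$ and $a=p^{-1}$ one has $ay=1$ and $(ay;p)_\infty=0$. Your appeal to \Cref{thm:strong-prop} does not give what you need either: with $a_i=ay^i$ it yields $\prod_{i<n}a_i=a^ny^{\binom n2}\to 0$, not $(ay)^n\to 0$. The same objection defeats your claims that all the factors $1-ayq^j$ and $1-xtq^j$ are units.

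This is not cosmetic: with $\mathtt{bInv}$ junk values the identity is actually false without further hypotheses. Take $R=\Q_p$, $q=t=y=p$, $a=p^{-1}$, $x=0$.
\begin{itemize}
\item Left side: since $(1;p)_n=0$ for $n\ge 1$ and $\mathtt{bInv}(0)=1$, it equals $\sum_n p^n=\frac{1}{1-p}$.
\item Right side: $(xt)_\infty=1$, the units $(y)_\infty=(t)_\infty$ cancel, and $\mathtt{bInv}\,(ay)_\infty=1$. What remains is ${}_{2}\phi_{1}(p^{-1},p;0;p,p)=\sum_m(p^{-1};p)_m\,p^m=1+(1-p^{-1})p=p$.
\end{itemize}
The fix is to \emph{assume} that $(ay)_\infty$ and $(xt)_\infty$ are units. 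This is exactly what the formal statement \texttt{qHypergeometric\_Heine'} does with \texttt{hay} and \texttt{hxt}, and it uses \texttt{hzu} and \texttt{hyu} for the second form. Then $(ay)_n$, $(xt)_m$, $(ayq^n)_\infty$ and $(xtq^m)_\infty$ are automatically units, since each divides a unit in a commutative ring. With that change the rest of your argument goes through unchanged.
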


In Lean:

\begin{code}{RogersRamanujan/NumberTheory/QTheory/HypergeometricSeries.lean}
\kw{theorem} qHypergeometric_Heine' \imp{R} [CommRing \var{R}] [UniformSpace \var{R}]
  [IsUniformAddGroup \var{R}] [CompleteSpace \var{R}] [StrongNonarchimedeanRing \var{R}] [T2Space \var{R}]
  \imp{x y a q t : R} (\var{hq} : IsTopologicallyNilpotent \var{q}) (\var{ht} : IsTopologicallyNilpotent \var{t})
  (\var{hy} : IsTopologicallyNilpotent \var{y})
  (\var{hxt} : IsUnit (\var{x} * \var{t})_∞) (\var{hay} : IsUnit (\var{a} * \var{y})_∞) :
  ₂φ₁(\var{x}, \var{y}; \var{a} * \var{y}; \var{q}, \var{t}) =
    (\var{y})_∞ * (\var{x} * \var{t})_∞ * bInv (\var{a} * \var{y})_∞ * bInv (\var{t})_∞ * ₂φ₁(\var{a}, \var{t}; \var{x} * \var{t}; \var{q}, \var{y}) :=

\kw{theorem} qHypergeometric_Heine \imp{R} [CommRing \var{R}] [UniformSpace \var{R}]
  [IsUniformAddGroup \var{R}] [CompleteSpace \var{R}] [StrongNonarchimedeanRing \var{R}] [T2Space \var{R}]
  \imp{x y z q t : R} (\var{hq} : IsTopologicallyNilpotent \var{q}) (\var{ht} : IsTopologicallyNilpotent \var{t})
  (\var{hy} : IsTopologicallyNilpotent \var{y})
  (\var{hxt} : IsUnit (\var{x} * \var{t})_∞) (\var{hyu} : IsUnit \var{y}) (\var{hzu} : IsUnit (\var{z})_∞) :
  ₂φ₁(\var{x}, \var{y}; \var{z}; \var{q}, \var{t}) =
    (\var{y})_∞ * (\var{x} * \var{t})_∞ * bInv (\var{z})_∞ * bInv (\var{t})_∞ * ₂φ₁(\var{z} * bInv \var{y}, \var{t}; \var{x} * \var{t}; \var{q}, \var{y}) :=
\end{code}
\begin{proof}
    By Theorem \ref{thm:q-binomial-series}, we can prove that
    \begin{equation}
        \frac{(ay)_\infty}{(y)_\infty} {}_{2}\phi_{1}(x,y;ay;q,t) = \sum_{n, m = 0}^{\infty} \frac{(x)_n(a)_m}{(q)_n(q)_m} t^n y^m q^{nm}
    \end{equation}
    where the right hand side is symmetric under $a \leftrightarrow x$ and $y \leftrightarrow t$, which gives
    \begin{equation}
        \frac{(ay)_\infty}{(y)_\infty} {}_{2}\phi_{1}(x,y;ay;q,t) = \frac{(xt)_\infty}{(t)_\infty} {}_{2}\phi_{1}(a, t;xt;q,y).
    \end{equation}
    Now set $z = ay$ to get \eqref{eqn:heine}.
\end{proof}

Heine's identity can be used to prove the following $q$-analogue of Gauss's identity for ${}_{2}\phi_{1}$.
\begin{theorem}[$q$-Gauss]
    For $x, y, t, q \in R$ with $y, t, q$ topologically nilpotent, we have
    \begin{equation}
        {}_{2}\phi_{1}\left(x, y; xyt; q, t\right) = \frac{\left(xt\right)_\infty\left(yt\right)_\infty}{(xyt)_\infty \left(t\right)_\infty}. \label{eqn:gauss'}
    \end{equation}
    In particular, when $x, y$ are invertible, taking $z = xyt$ gives
    \begin{equation}
        {}_{2}\phi_{1}\left(x, y; z; q, \frac{z}{xy}\right) = \frac{\left(\frac{z}{x}\right)_\infty\left(\frac{z}{y}\right)_\infty}{(z)_\infty \left(\frac{z}{xy}\right)_\infty}. \label{eqn:gauss}
    \end{equation}
\end{theorem}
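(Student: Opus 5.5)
Apply Heine's identity in the form \eqref{eqn:heine'}, choosing the parameters so that the transformed ${}_{2}\phi_{1}$ on the right collapses to a ${}_{1}\phi_{0}$, which Theorem~\ref{thm:q-binomial-series} then evaluates as a product. Concretely, in \eqref{eqn:heine'} replace $(x,y,a,t)$ by $(x, y, xt, t)$. The denominator parameter becomes $ay = xyt$, so the left side is exactly ${}_{2}\phi_{1}(x,y;xyt;q,t)$. The hypotheses of Theorem~\ref{thm:heine} only require $y,t,q$ topologically nilpotent, which we have.

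The right side is
\[
\frac{(y)_\infty (xt)_\infty}{(xyt)_\infty (t)_\infty}\, {}_{2}\phi_{1}(xt, t; xt; q, y).
\]
In the inner series the upper parameter $xt$ and the lower parameter $xt$ cancel termwise: $(xt)_n\,(xt)_n^{-1} = 1$. This needs $(xt)_n$ to be a unit, which follows from $(xt)_\infty$ being a unit, since $(xt)_n$ divides it by Lemma~\ref{lem:shift}. What remains is ${}_{1}\phi_{0}(t;q,y) = \sum_n \frac{(t)_n}{(q)_n} y^n$. By Theorem~\ref{thm:q-binomial-series} with $a = t$ and variable $y$ (nilpotent), this equals $(ty)_\infty/(y)_\infty$. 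Substituting, the factor $(y)_\infty$ cancels and we obtain \eqref{eqn:gauss'}. For \eqref{eqn:gauss}, with $x,y$ invertible, set $t = z/(xy)$, so that $xt = z/y$, $yt = z/x$ and $xyt = z$. This is then purely notational, provided $z/(xy)$ is topologically nilpotent, which is an implicit hypothesis of that form.

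The main obstacle is the unit and junk-value bookkeeping rather than the mathematics. Three cancellations need care. First, $(xt)_n\,\mathrm{bInv}((xt)_n) = 1$ requires invertibility of each $(xt)_n$, which I plan to derive from a hypothesis that $(xt)_\infty$ is a unit, exactly as in the Lean statement of Heine. Second, $(y)_\infty \cdot \mathrm{bInv}((y)_\infty) = 1$ holds because $y$ nilpotent makes $(y)_\infty$ a unit by Lemma~\ref{lem:poch-inf-inv}. Third, the $(xyt)_\infty$ factor requires $(xyt)_\infty$ to be a unit; since $xyt$ is nilpotent, this follows from Lemma~\ref{lem:poch-inf-inv} as well.

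I would therefore state the formal version with hypothesis IsUnit $(x t)_\infty$, derive the remaining unit facts from nilpotency, and then rewrite by Heine, simplify the inner sum termwise (tsum congruence), apply the $q$-binomial series identity, and finish with ring normalization.
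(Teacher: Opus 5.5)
Your route is the paper's: Heine with $z = xyt$ (your $a = xt$) makes the upper and lower parameters coincide, the inner series collapses to ${}_{1}\phi_{0}(t;q,y)$, and Theorem~\ref{thm:q-binomial-series} evaluates it. The step that fails is your third piece of unit bookkeeping. You assert that $xyt$ is topologically nilpotent, but only $y$, $t$, $q$ are assumed to be. Since $x$ is arbitrary, $(xyt)^n = x^n y^n t^n$ need not tend to $0$: if $yt$ is invertible and $x = (yt)^{-1}$, then $xyt = 1$ and $(xyt)_\infty = 0$. So invertibility of $(xyt)_\infty$ cannot be derived from the hypotheses and must be assumed. It is implicit in the fraction in the informal statement, and it is exactly the hypothesis \texttt{hxyt} of the paper's \texttt{qHypergeometric\_Gauss'}. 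You also need it just to invoke Heine' with $a = xt$, whose hypothesis \texttt{hay} is the same condition.

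This is not cosmetic: the formal statement you propose is false under the junk-value conventions. Your hypotheses are only $\mathrm{IsUnit}\,(xt)_\infty$ plus nilpotency of $y,t,q$. In $\mathbb{Q}(\!(s)\!)$ take $q = s^2$, $y = t = s$, $x = s^{-2}$. Then $(xt)_\infty = (1-s^{-1})(s;s^2)_\infty$ is a unit and $xyt = 1$.

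\begin{itemize}
\item Left side: since $(x)_n = 0$ for $n \ge 2$ and $\mathrm{bInv}((1;q)_1) = \mathrm{bInv}(0) = 1$, it is the finite sum $1 + (1-s^{-2})(1-s)s/(1-s^2) = 2 - s^{-1}$.
\item Right side: it is $(1-s^{-1})(s;s^2)_\infty (s^2;s^2)_\infty \cdot 1 \cdot (s;s^2)_\infty^{-1} = (1-s^{-1})(s^2;s^2)_\infty$, whose constant term is $1$, not $2$.
\end{itemize}

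Adding $\mathrm{IsUnit}\,(xyt)_\infty$ as a hypothesis repairs the argument. Your other unit facts are fine: $(xt)_n$ is a unit because $(xt)_\infty$ is, and $(y)_\infty$, $(t)_\infty$ are units by nilpotency. In the second form the corresponding hypotheses are $\mathrm{IsUnit}\,(z)_\infty$ and $\mathrm{IsUnit}\,(z/y)_\infty$, as in the paper's Lean statement.
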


In Lean:

\begin{code}{RogersRamanujan/NumberTheory/QTheory/HypergeometricSeries.lean}
\kw{theorem} qHypergeometric_Gauss' \imp{R} [CommRing \var{R}] [UniformSpace \var{R}]
  [IsUniformAddGroup \var{R}] [CompleteSpace \var{R}] [StrongNonarchimedeanRing \var{R}] [T2Space \var{R}]
  \imp{x y q t : R} (\var{hq} : IsTopologicallyNilpotent \var{q}) (\var{ht} : IsTopologicallyNilpotent \var{t})
  (\var{hy} : IsTopologicallyNilpotent \var{y})
  (\var{hxt} : IsUnit (\var{x} * \var{t})_∞) (\var{hxyt} : IsUnit (\var{x} * \var{y} * \var{t})_∞) :
  ₂φ₁(\var{x}, \var{y}; \var{x} * \var{y} * \var{t}; \var{q}, \var{t}) = (\var{x} * \var{t})_∞ * (\var{y} * \var{t})_∞ * bInv (\var{x} * \var{y} * \var{t})_∞ * bInv (\var{t})_∞ :=

\kw{theorem} qHypergeometric_Gauss \imp{R} [CommRing \var{R}] [UniformSpace \var{R}]
  [IsUniformAddGroup \var{R}] [CompleteSpace \var{R}] [StrongNonarchimedeanRing \var{R}] [T2Space \var{R}]
  \imp{x y q z : R} (\var{hxu} : IsUnit \var{x}) (\var{hyu} : IsUnit \var{y})
  (\var{hq} : IsTopologicallyNilpotent \var{q})
  (\var{hzxy} : IsTopologicallyNilpotent (\var{z} * bInv \var{x} * bInv \var{y}))
  (\var{hy} : IsTopologicallyNilpotent \var{y})
  (\var{hz} : IsUnit (\var{z})_∞) (\var{hzy} : IsUnit (\var{z} * bInv \var{y})_∞) :
  ₂φ₁(\var{x}, \var{y}; \var{z}; \var{q}, \var{z} * bInv \var{x} * bInv \var{y}) =
    (\var{z} * bInv \var{x})_∞ * (\var{z} * bInv \var{y})_∞ * bInv (\var{z})_∞ * bInv (\var{z} * bInv \var{x} * bInv \var{y})_∞ :=
\end{code}
\begin{proof}
    It follows by applying Theorem \ref{thm:heine} with $t = \frac{z}{xy}$ and using Theorem \ref{thm:q-binomial-series}.
\end{proof}

We also proved the $q$-Pfaff--Saalsch\"utz identity on ${}_{3}\phi_{2}$, which will be used in the proof of Bailey's lemma for $q$-series.
\begin{theorem}[$q$-Pfaff--Saalsch\"utz]
\label{thm:qPS-original}
    \begin{equation}
        {}_{3}\phi_{2}\left(x, y, q^{-n}; z, \frac{xyq^{1-n}}{z}; q, q\right) = \frac{\left(\frac{z}{x}\right)_n \left(\frac{z}{y}\right)_n}{(z)_n \left(\frac{z}{xy}\right)_n}
    \end{equation}
\end{theorem}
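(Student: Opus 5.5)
The plan is to clear denominators and reduce the statement to Lemma \ref{lem:binomial_cauchy_2}; the paper has already flagged a ``cleared'' form (Theorem \ref{thm:qPS-cleared}) built from that lemma. Since the left side is a terminating series ($(q^{-n})_k = 0$ for $k > n$), everything is a finite identity and can be checked in the universal ring. The natural substitution is to rename the parameters so that the identity of Lemma \ref{lem:binomial_cauchy_2}, namely
\[
(ac)_n (bc)_n = \sum_{k=0}^{n} \qbin{n}{k} (a)_k (b)_k c^k (c)_{n-k} (abcq^k)_{n-k},
\]
matches the qPS sum. I will take $c = z/(xy)$, $a = x$ and $b = y$. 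Then $ac = z/y$, $bc = z/x$ and $abc = z$, so the left side is $(z/x)_n (z/y)_n$. The right side carries the factors $(z)_n$ and $(z/(xy))_n$ distributed across the terms.

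First I would rewrite the $k$-th term of ${}_3\phi_2$ in closed form. The key step is the reversal identity
\[
\frac{(q^{-n})_k}{(q)_k} = (-1)^k q^{\binom{k}{2} - nk} \qbin{n}{k},
\]
together with the reflection formula
\[
\Bigl(\tfrac{xyq^{1-n}}{z}\Bigr)_k = \Bigl(-\tfrac{xy}{z}\Bigr)^k q^{\binom{k}{2} + k - nk}\, \frac{\bigl(\tfrac{z}{xy}\bigr)_n}{\bigl(\tfrac{z}{xy}\bigr)_{n-k}},
\]
which comes from reversing the order of the factors in the finite product. Using also $(z)_n = (z)_k\,(zq^k)_{n-k}$, I multiply the whole identity by $(z)_n (z/(xy))_n$. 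The $k$-th term then becomes
\[
\qbin{n}{k} (x)_k (y)_k \Bigl(\tfrac{z}{xy}\Bigr)^k \Bigl(\tfrac{z}{xy}\Bigr)_{n-k} (zq^k)_{n-k}.
\]
In the product of the two power-of-$q$ factors the $\binom{k}{2}$ and $nk$ exponents cancel between numerator and denominator, and the signs combine to leave exactly $q^{-k}\cdot q^{k} = 1$. The summand is then precisely the $k$-th summand of Lemma \ref{lem:binomial_cauchy_2} with $(a,b,c) = (x, y, z/(xy))$, so the cleared identity follows.

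To return to the stated form, I would divide by $(z)_n (z/(xy))_n$, under the hypotheses that $x$ and $y$ are units and that $(z)_n$, $(z/(xy))_n$ and $(xyq^{1-n}/z)_k$ for $k \le n$ are units. With \texttt{bInv} these conditions simply appear as \texttt{IsUnit} hypotheses. Also, $q$ must be a unit for $q^{-n}$ to make sense.

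I expect the main obstacle to be the bookkeeping in the reflection step: showing that the exponent of $q$ and the sign cancel exactly, with natural-number subtraction $n-k$ and $\binom{k}{2}$ as \texttt{choose}. I would prove the reflection formula separately by induction on $k$, in the form $(wq^{1-n})_k\,(1/w)_{n-k}\cdot(\text{monomial}) = (1/w)_n$ with no division, and only then assemble the summands termwise.
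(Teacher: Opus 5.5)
Your proposal is correct and follows essentially the paper's route: the paper also specializes Lemma~\ref{lem:binomial_cauchy_2} at $(a,b,c)=(x,y,z/(xy))$ to get the denominator-cleared Theorem~\ref{thm:qPS-cleared}, which it writes after multiplying by $(q)_n$ and with $(q^{n+1-k})_k(q^{k+1})_{n-k}$ in place of $(q)_n\qbin{n}{k}$; your reversal/reflection computation is exactly the step, left implicit in the paper, that turns the ${}_3\phi_2$ summands into that cleared form. One small addition to your list of unit hypotheses: you also need $z$ to be a unit, so that $xy/z$ and $z/(xy)$ are mutually inverse in the reflection step, and $(q)_k$ to be a unit for $k\le n$ for the reversal step.
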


In Lean:

\begin{code}{RogersRamanujan/NumberTheory/QTheory/HypergeometricSeries.lean}
\kw{theorem} qHypergeometric_PfaffSaalschutz \imp{R} [CommRing \var{R}] [UniformSpace \var{R}]
  [IsUniformAddGroup \var{R}] [CompleteSpace \var{R}] [StrongNonarchimedeanRing \var{R}] [T2Space \var{R}]
  \imp{q : R} (\var{hq} : IsTopologicallyNilpotent \var{q}) (\var{x} \var{y} \var{z} : \var{R}) (\var{n} : ℕ)
  (\var{hqu} : IsUnit \var{q}) (\var{hxu} : IsUnit \var{x}) (\var{hyu} : IsUnit \var{y}) (\var{hzu} : IsUnit \var{z})
  (\var{hz} : ∀ \var{k}, IsUnit (\var{z})_\var{k})
  (\var{hxyqz} : ∀ \var{k}, IsUnit (\var{x} * \var{y} * \var{q} * (bInv \var{q}) ^ \var{n} * bInv \var{z})_\var{k})
  (\var{hzxy} : IsTopologicallyNilpotent (\var{z} * bInv \var{x} * bInv \var{y})) :
  ₃φ₂(\var{x}, \var{y}, (bInv \var{q}) ^ \var{n}; \var{z}, \var{x} * \var{y} * \var{q} * (bInv \var{q}) ^ \var{n} * bInv \var{z}; \var{q}, \var{q}) =
    (\var{z} * bInv \var{x})_\var{n} * (\var{z} * bInv \var{y})_\var{n} * bInv (\var{z})_\var{n} * bInv (\var{z} * bInv \var{x} * bInv \var{y})_\var{n} :=
\end{code}

The standard proof of Theorem \ref{thm:qPS-original} is obtained by comparing the coefficients of $t^n$ in the following Rogers' identity
\begin{equation}
    {}_{2}\phi_{1}\left(\frac{z}{x}, \frac{z}{y}; z; \frac{xyt}{q}\right) = \sum_{n=0}^{\infty} \frac{\left(\frac{z}{xy}\right)_n\left(\frac{xyt}{z}\right)^n}{(q)_n} {}_{2}\phi_{1}(x,y;z;q,t).
\end{equation}
(e.g. see \cite{Watson1929}).
Instead, we proved a denominator-cleared version of the identity, which does not include inverses.
In particular, the identity is purely algebraic and we do not require any topology on the ring $R$.
\begin{theorem}[$q$-Pfaff--Saalsch\"utz, denominator-cleared version]
\label{thm:qPS-cleared}
    \begin{equation}
        \sum_{k=0}^{n} (x)_k (y)_k (q^{n+1-k})_k \left(\frac{z}{xy}\right)^{k} (q^{k+1})_{n-k} (zq^k)_{n-k} \left(\frac{z}{xy}\right)_{n-k} = (q)_n \left(\frac{z}{x}\right)_n \left(\frac{z}{y}\right)_n
    \end{equation}
\end{theorem}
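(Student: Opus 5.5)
The identity is a specialization of Lemma~\ref{lem:binomial_cauchy_2}, combined with one elementary identity for $q$-binomial coefficients. To stay division-free, I will write $w$ for $\frac{z}{xy}$. I will read the statement with $z/x$ replaced by $yw$ and $z/y$ replaced by $xw$; in the formal version $w$ is simply a ring element. Since $z = xyw$, the claim becomes a polynomial identity in $x,y,w,q$ over any commutative ring, with no topology and no invertibility needed.

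\textbf{Step 1 (specialize Lemma~\ref{lem:binomial_cauchy_2}).} Apply Lemma~\ref{lem:binomial_cauchy_2} with $a = y$, $b = x$, $c = w$. Then $ac = yw = z/x$, $bc = xw = z/y$ and $abc = xyw = z$, so
\begin{equation*}
\left(\tfrac{z}{x}\right)_n\left(\tfrac{z}{y}\right)_n = \sum_{k=0}^{n} \qbin{n}{k} (y)_k (x)_k\, w^k\, (w)_{n-k}\, (zq^k)_{n-k}.
\end{equation*}
Multiplying both sides by $(q)_n$, the right-hand side agrees term by term with the left-hand side of Theorem~\ref{thm:qPS-cleared}, up to commutativity, provided that for every $0\le k\le n$
\begin{equation*}
(q^{n+1-k})_k\,(q^{k+1})_{n-k} = (q)_n \qbin{n}{k}.
\end{equation*}

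\textbf{Step 2 (the $q$-binomial identity).} I prove this in two division-free pieces.

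First, $(q)_k\,(q^{k+1})_{n-k} = (q)_n$. This follows directly from the product definition by splitting $\prod_{i<n}$ at $i=k$; it is the finite analogue of Lemma~\ref{lem:shift}.

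Second, $(q^{n+1-k})_k = \qbin{n}{k}(q)_k$, the $q$-analogue of ``falling factorial $=$ binomial $\times$ factorial.'' I would prove it by induction on $n$ with $k$ general, using the recursive definition of $\qbin{n}{k}$. Equivalently, one can use the symmetric Pascal rule $\qbin{n+1}{k+1} = q^{n-k}\qbin{n}{k} + \qbin{n}{k+1}$, already used in the proof of Lemma~\ref{lem:binomial_cauchy}, together with $(q)_{k+1} = (q)_k(1-q^{k+1})$. Alternatively, both pieces follow at once from the factorial formula $\qbin{n}{k}[k]_q![n-k]_q! = [n]_q!$ used in Lemma~\ref{lem:binomial_mul}, together with $(q)_m = (1-q)^m [m]_q!$.

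Combining the two pieces gives
\begin{equation*}
(q^{n+1-k})_k\,(q^{k+1})_{n-k} = \qbin{n}{k}(q)_k\,(q^{k+1})_{n-k} = \qbin{n}{k}(q)_n,
\end{equation*}
which is exactly the identity required in Step~1.

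\textbf{Main obstacle.} I expect the main difficulty to be the second piece of Step~2. The exponent $n+1-k$ involves natural-number subtraction, so the induction must be set up so that the shift $(q^{n+1-k})_k$ at level $n+1$ correctly relates to the terms at level $n$. I would first try the factorial route, since Lemma~\ref{lem:binomial_mul} already relies on the same formula. Everything else is reindexing and commutative ring normalization.
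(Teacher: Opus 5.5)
Your proposal is correct and follows the paper's proof: specialize Lemma~\ref{lem:binomial_cauchy_2} at $c = z/(xy)$ (your swap $a=y$, $b=x$ is immaterial), multiply by $(q)_n$, and use $(q^{n+1-k})_k\,(q^{k+1})_{n-k} = (q)_n\qbin{n}{k}$. You justify that last identity in more detail than the paper does. One small caution: the factorial route to $(q^{n+1-k})_k = \qbin{n}{k}(q)_k$ requires cancelling $(q)_{n-k}$, which is only legitimate in a domain such as $\Z[q]$ (followed by transfer via functoriality), whereas your induction via the defining recursion of $\qbin{n}{k}$ works directly in any commutative ring.
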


In Lean:

\begin{code}{RogersRamanujan/NumberTheory/QTheory/HypergeometricSeries.lean}
\kw{theorem} qPfaffSaalschutz_denom_cleared \imp{R} [CommRing \var{R}] \imp{q : R} (\var{x} \var{y} \var{z} : \var{R}) (\var{n} : ℕ)
  (\var{hx} : IsUnit \var{x}) (\var{hy} : IsUnit \var{y}) :
  \kw{let} \var{c} := \var{z} * bInv \var{x} * bInv \var{y}
  ∑ \var{k} ∈ Finset.range (\var{n} + \num{1}),
    (\var{x})_\var{k} * (\var{y})_\var{k} * (\var{q} ^ (\var{n} + \num{1} - \var{k}))_\var{k} *
      \var{c} ^ \var{k} * (\var{q} ^ (\var{k} + \num{1}))_(\var{n} - \var{k}) * (\var{z} * \var{q} ^ \var{k})_(\var{n} - \var{k}) * (\var{c})_(\var{n} - \var{k}) =
  (\var{q})_\var{n} * (\var{z} * bInv \var{x})_\var{n} * (\var{z} * bInv \var{y})_\var{n} :=
\end{code}
\begin{proof}
Applying Lemma \ref{lem:binomial_cauchy_2} to $a = x$, $b = y$, and $c = \frac{z}{xy}$ gives
\begin{equation}
    \sum_{k=0}^{n} \begin{bmatrix} n \\ k \end{bmatrix}_q (x)_k (y)_k \left(\frac{z}{xy}\right)^k \left(\frac{z}{xy}\right)_{n-k} (zq^k)_{n-k} = \left(\frac{z}{x}\right)_n \left(\frac{z}{y}\right)_n.
\end{equation}
Now, multiply both sides by $(q)_n$ and use
\begin{equation}
    (q^{n+1-k})_k (q^{k+1})_{n-k} = (q)_n \begin{bmatrix} n \\ k \end{bmatrix}_q
\end{equation}
to get the desired identity.
\end{proof}

\subsection{Bailey pairs, Bailey's Lemma, and reusable transformation machinery}

We say that two sequences $(\alpha_n, \beta_n)_{n \ge 0}$ form a \emph{Bailey pair with respect to $a$} if the following identity holds for all $n \ge 0$:
\begin{equation}
    \beta_n = \sum_{r=0}^{n} \frac{\alpha_r}{(q)_{n-r} (aq)_{n+r}}
\end{equation}
It is straightforward to formalize the above definition of a Bailey pair:

\begin{code}{RogersRamanujan/NumberTheory/QTheory/Bailey.lean}
\kw{def} Bailey.IsBaileyPair \imp{R} [CommRing \var{R}] (\var{a} \var{q} : \var{R}) (\var{α} \var{β} : ℕ → \var{R}) : Prop :=
  ∀ (\var{n} : ℕ),
    \var{β} \var{n} = ∑ \var{r} ∈ range (\var{n} + \num{1}),
      \var{α} \var{r} * bInv (qPochhammer \var{q} \var{q} (\var{n} - \var{r})) * bInv (qPochhammer (\var{a} * \var{q}) \var{q} (\var{n} + \var{r}))
\end{code}

Bailey's lemma states that one can produce a new Bailey pair from an old Bailey pair. 

\begin{lemma}[Bailey's lemma]
\label{lem:q-bailey}
Let $(\alpha_n, \beta_n)$ be a Bailey pair with respect to $a$, and $\rho_1, \rho_2$ be parameters. Then:
\begin{align}
    \alpha_n' &:= \frac{(\rho_1;q)_n (\rho_2;q)_n }{(\frac{aq}{\rho_1};q)_n (\frac{aq}{\rho_2};q)_n}\left(\frac{aq}{\rho_1\rho_2}\right)^n \alpha_n \label{eqn:bailey-alphaprime} \\
    \beta_n' &= \sum_{j=0}^{n} \frac{(\rho_1;q)_j (\rho_2;q)_j \left(\frac{aq}{\rho_1\rho_2};q\right)_{n-j}}{(\frac{aq}{\rho_1};q)_n (\frac{aq}{\rho_2};q)_n (q;q)_{n-j}} \left(\frac{aq}{\rho_1\rho_2}\right)^j \beta_j \label{eqn:bailey-betaprime}
\end{align}
also form a Bailey pair with respect to $a$, i.e.
\begin{equation}
    \sum_{j=0}^{n} \frac{(\rho_1;q)_j (\rho_2;q)_j \left(\frac{aq}{\rho_1\rho_2};q\right)_{n-j}}{(\frac{aq}{\rho_1};q)_n (\frac{aq}{\rho_2};q)_n (q;q)_{n-j}} \left(\frac{aq}{\rho_1\rho_2}\right)^j \beta_j = \sum_{j=0}^{n} \frac{(\rho_1;q)_j (\rho_2;q)_j}{(\frac{aq}{\rho_1};q)_j (\frac{aq}{\rho_2};q)_j (q)_{n-j}(aq)_{n+j}}\left(\frac{aq}{\rho_1\rho_2}\right)^j \alpha_j
\end{equation}
\end{lemma}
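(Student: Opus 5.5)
The plan is the standard one: substitute the Bailey pair relation for $\beta_j$ into $\beta_n'$, interchange the two finite sums, and evaluate the inner sum by the $q$-Pfaff--Saalsch\"utz identity. Write $c := \frac{aq}{\rho_1\rho_2}$. Plugging $\beta_j=\sum_{r\le j}\alpha_r/((q)_{j-r}(aq)_{j+r})$ into the left-hand side and swapping the order of summation ($0\le r\le j\le n$) gives
\begin{equation*}
\beta_n' = \frac{1}{(\frac{aq}{\rho_1})_n(\frac{aq}{\rho_2})_n}\sum_{r=0}^{n}\alpha_r\sum_{j=r}^{n}\frac{(\rho_1)_j(\rho_2)_j(c)_{n-j}\,c^j}{(q)_{n-j}(q)_{j-r}(aq)_{j+r}} .
\end{equation*}
It then suffices to show, for each fixed $r$, that the inner sum equals
\begin{equation*}
\frac{(\rho_1)_r(\rho_2)_r(\frac{aq}{\rho_1})_n(\frac{aq}{\rho_2})_n\,c^r}{(\frac{aq}{\rho_1})_r(\frac{aq}{\rho_2})_r(q)_{n-r}(aq)_{n+r}} .
\end{equation*}
Comparing coefficients of $\alpha_r$ then yields exactly the stated identity.

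For the inner sum, reindex by $j=r+k$ with $0\le k\le N:=n-r$. Use the splittings $(\rho_i)_{r+k}=(\rho_i)_r(\rho_iq^r)_k$ and $(aq)_{2r+k}=(aq)_{2r}(aq^{2r+1})_k$, and pull out the factor $c^r(\rho_1)_r(\rho_2)_r/(aq)_{2r}$. What remains is
\begin{equation*}
\sum_{k=0}^{N}\frac{(\rho_1q^r)_k(\rho_2q^r)_k(c)_{N-k}\,c^k}{(q)_k(q)_{N-k}(aq^{2r+1})_k} .
\end{equation*}
This is precisely the $q$-Binomial-weighted form treated in Lemma \ref{lem:binomial_cauchy_2} and Theorem \ref{thm:qPS-cleared}. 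Multiply through by $(q)_N$, which produces $\qbin{N}{k}$, and set $x=\rho_1q^r$, $y=\rho_2q^r$, $z=aq^{2r+1}$, so that $z/(xy)=c$. The product $(z)_N$ of the Saalsch\"utz denominators is then cleared by multiplying the summand by $(zq^k)_{N-k}/(zq^k)_{N-k}$, since $(z)_k(zq^k)_{N-k}=(z)_N$. Lemma \ref{lem:binomial_cauchy_2} then evaluates the sum to
\begin{equation*}
\frac{(z/x)_N(z/y)_N}{(q)_N(z)_N}=\frac{(\frac{aq^{r+1}}{\rho_1})_{n-r}(\frac{aq^{r+1}}{\rho_2})_{n-r}}{(q)_{n-r}(aq^{2r+1})_{n-r}} .
\end{equation*}
Recombining with $(\frac{aq}{\rho_i})_r(\frac{aq^{r+1}}{\rho_i})_{n-r}=(\frac{aq}{\rho_i})_n$ and $(aq)_{2r}(aq^{2r+1})_{n-r}=(aq)_{n+r}$ gives the target.

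I expect the main obstacle to be not the algebra but the invertibility bookkeeping, especially in a formal setting where all divisions are \texttt{bInv}. One must assume that $\rho_1,\rho_2$ are units, together with every Pochhammer symbol appearing in a denominator: $(q)_m$, $(aq)_m$ and $(\frac{aq}{\rho_i})_m$. Then every use of $(u)_{m+k}=(u)_m(uq^m)_k$ in a denominator becomes a statement about products of units. To keep this manageable, I would first prove the inner-sum evaluation as a denominator-cleared polynomial identity, obtained by multiplying both sides by $(q)_N(aq)_{n+r}(\frac{aq}{\rho_1})_r(\frac{aq}{\rho_2})_r$. That version follows directly from Lemma \ref{lem:binomial_cauchy_2}, or from Theorem \ref{thm:qPS-cleared} with the factorization $(q^{N+1-k})_k(q^{k+1})_{N-k}=(q)_N\qbin{N}{k}$. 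Only afterwards would I divide out using the unit hypotheses. The sum interchange itself is a routine finite-triangle reindexing.
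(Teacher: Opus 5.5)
Your proposal is correct and matches the route the paper takes for the formalized Bailey's lemma. The paper does not use the ``standard'' infinite-sum argument through Lemma~\ref{lem:bailey-basic} and Theorem~\ref{thm:qPS-original}; it proves the lemma purely algebraically from the denominator-cleared $q$-Pfaff--Saalsch\"utz identity (Theorem~\ref{thm:qPS-cleared}, itself derived from Lemma~\ref{lem:binomial_cauchy_2}), assuming only invertibility of the relevant Pochhammer symbols and no topology, which is exactly your finite swap-and-evaluate computation.

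A minor advantage of invoking Lemma~\ref{lem:binomial_cauchy_2} directly with $c=\frac{aq}{\rho_1\rho_2}$ is that $x=\rho_1q^r$ and $y=\rho_2q^r$ need not be units. Theorem~\ref{thm:qPS-cleared} as stated in Lean requires them to be units, which forces $q$ to be a unit, and the Lean version of Bailey's lemma does carry that hypothesis.
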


In Lean:

\begin{code}{RogersRamanujan/NumberTheory/QTheory/Bailey.lean}
\kw{theorem} Bailey.qBaileyLemma \imp{R} [CommRing \var{R}] (\var{a} \var{q} : \var{R}) (\var{α} \var{β} \var{α'} \var{β'} : ℕ → \var{R}) 
  (\var{h} : IsBaileyPair \var{a} \var{q} \var{α} \var{β})
  (\var{ρ₁} \var{ρ₂} : \var{R}) (\var{hρ₁} : IsUnit \var{ρ₁}) (\var{hρ₂} : IsUnit \var{ρ₂}) (\var{hq} : IsUnit \var{q})
  (\var{hqpoc} : ∀ \var{k}, IsUnit (\var{q})_\var{k})
  (\var{haqpoc} : ∀ \var{k}, IsUnit (\var{a} * \var{q})_\var{k})
  (\var{haqρ₁} : ∀ \var{k}, IsUnit (\var{a} * \var{q} * bInv \var{ρ₁})_\var{k})
  (\var{haqρ₂} : ∀ \var{k}, IsUnit (\var{a} * \var{q} * bInv \var{ρ₂})_\var{k})
  (\var{hα'} : ∀ \var{n}, \var{α'} \var{n} = (\var{ρ₁})_\var{n} * (\var{ρ₂})_\var{n} * (\var{a} * \var{q} * bInv (\var{ρ₁} * \var{ρ₂})) ^ \var{n} *
    bInv (\var{a} * \var{q} * bInv \var{ρ₁})_\var{n} * bInv (\var{a} * \var{q} * bInv \var{ρ₂})_\var{n} * \var{α} \var{n})
  (\var{hβ'} : ∀ \var{n}, \var{β'} \var{n} = ∑ \var{j} ∈ range (\var{n} + \num{1}),
    (\var{ρ₁})_\var{j} * (\var{ρ₂})_\var{j} * (\var{a} * \var{q} * bInv (\var{ρ₁} * \var{ρ₂}))_(\var{n} - \var{j}) * (\var{a} * \var{q} * bInv (\var{ρ₁} * \var{ρ₂})) ^ \var{j} *
      bInv (\var{a} * \var{q} * bInv \var{ρ₁})_\var{n} * bInv (\var{a} * \var{q} * bInv \var{ρ₂})_\var{n} * bInv ((\var{q})_(\var{n} - \var{j})) * \var{β} \var{j}) :
  IsBaileyPair \var{a} \var{q} \var{α'} \var{β'} :=
\end{code}

The standard proof of Bailey's lemma is based on the following simple lemma on infinite double sums: 
\begin{lemma}
\label{lem:bailey-basic}
If $\alpha, \beta, \gamma, \delta, u, v$ are sequences satisfying
\begin{align}
    \beta_n &= \sum_{r=0}^{n} \alpha_r u_{n-r} v_{n+r} \label{eqn:bailey-basic-1} \\
    \gamma_n &= \sum_{r=n}^{\infty} \delta_r u_{r-n} v_{r+n} \label{eqn:bailey-basic-2}
\end{align}
then we have
\begin{equation}
    \sum_{n=0}^{\infty} \alpha_n \gamma_n = \sum_{n=0}^{\infty} \beta_n \delta_n.
\end{equation}
\end{lemma}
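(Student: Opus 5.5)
The plan is to substitute both defining relations into the two sides and observe that each becomes the same double sum over pairs $(r,n)$ with $r \le n$. The common summand is $\alpha_r \delta_n u_{n-r} v_{n+r}$.

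First, substituting \eqref{eqn:bailey-basic-2} into the left-hand side gives
\begin{equation*}
\sum_{n=0}^{\infty} \alpha_n \gamma_n = \sum_{n=0}^{\infty} \sum_{r=n}^{\infty} \alpha_n \delta_r u_{r-n} v_{r+n}.
\end{equation*}
Relabelling $(n,r) \mapsto (r,n)$, this is the sum of $\alpha_r \delta_n u_{n-r} v_{n+r}$ over the index set $\{(r,n) : r \le n\}$, organized by first summing over $n \ge r$ for fixed $r$. Second, substituting \eqref{eqn:bailey-basic-1} into the right-hand side gives
\begin{equation*}
\sum_{n=0}^{\infty} \beta_n \delta_n = \sum_{n=0}^{\infty} \sum_{r=0}^{n} \alpha_r u_{n-r} v_{n+r} \delta_n.
\end{equation*}
This is the same family of terms, now organized by first summing over $r \le n$ for fixed $n$. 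The inner sum here is finite, so no convergence issue arises at this step.

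Third, the lemma reduces to exchanging the order of summation in a double series indexed by the triangle $\{(r,n) : 0 \le r \le n\}$. I would state the lemma in the setting where this exchange is legitimate: a complete, $T_2$ (strongly) non-archimedean ring. There I would assume, or derive from the intended applications, that the family $(r,n) \mapsto \alpha_r \delta_n u_{n-r} v_{n+r}$ is unconditionally summable on the triangle. Completeness together with the non-archimedean property means this amounts to the terms tending to $0$ along the cofinite filter.

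Given that, the argument finishes by applying the standard iterated-sum results (the \texttt{tsum\_prod'}/\texttt{tsum\_comm'}-type lemmas) twice. One application sums over rows $n$ and recovers $\sum_n \beta_n\delta_n$. The other sums over columns $r$: each column sum is $\alpha_r\gamma_r$, and it converges because it is a subfamily of a summable family. Both equal the total sum, which proves the identity.

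The main obstacle is the analytic bookkeeping, not the algebra. Justifying summability of the two-dimensional family needs hypotheses that the statement leaves implicit. I would first try to add an explicit summability hypothesis on the triangle family. In the Bailey application with $\delta_n$ chosen as products like $(\rho_1)_n(\rho_2)_n (aq/\rho_1\rho_2)^n$, I would discharge it by showing the terms tend to $0$, using that $q$ is topologically nilpotent together with Theorem \ref{thm:strong-prop}.

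If that proves awkward, the fallback is to truncate. I would prove the finite identity
\begin{equation*}
\sum_{n \le N} \beta_n \delta_n = \sum_{r \le N} \alpha_r \sum_{n=r}^{N} \delta_n u_{n-r} v_{n+r},
\end{equation*}
which is a pure finite-sum exchange. I would then pass to the limit $N\to\infty$, controlling the tail of $\gamma_r$ uniformly in $r$.
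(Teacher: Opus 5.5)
Your proposal is correct and matches the paper's proof, which is exactly the exchange of the order of summation over the triangle $\{(r,n) : r \le n\}$ (via \texttt{Summable.tsum\_comm}), with summability of the two-dimensional family $\alpha_r\delta_n u_{n-r}v_{n+r}$ taken as an explicit hypothesis. The only difference is the setting: the paper works in an arbitrary $T_3$ topological commutative semiring, so it also assumes summability of each $\gamma_n$-series outright. That extra hypothesis is needed because, without completeness, subfamilies of summable families need not be summable, and also to identify each column sum with $\alpha_r\gamma_r$. Your complete non-archimedean setting gives column summability for free, but it still tacitly uses convergence of each $\gamma_r$-series, which is implicit in \eqref{eqn:bailey-basic-2}.
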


In Lean:

\begin{code}{RogersRamanujan/NumberTheory/QTheory/Bailey.lean}
\kw{theorem} Bailey.bailey_lemma_basic \imp{R} [CommSemiring \var{R}] [TopologicalSpace \var{R}]
  [IsTopologicalSemiring \var{R}] [T3Space \var{R}] (\var{u} \var{v} \var{α} \var{β} \var{γ} \var{δ} : ℕ → \var{R})
  (\var{hγ} : ∀ \var{n}, Summable \kw{fun} \var{r} ↦ \kw{if} \var{n} ≤ \var{r} \kw{then} \var{δ} \var{r} * \var{u} (\var{r} - \var{n}) * \var{v} (\var{r} + \var{n}) \kw{else} \num{0})
  (\var{hswap} : Summable (Function.uncurry \kw{fun} \var{n} \var{r} : ℕ ↦
    \kw{if} \var{n} ≤ \var{r} \kw{then} \var{α} \var{n} * \var{δ} \var{r} * \var{u} (\var{r} - \var{n}) * \var{v} (\var{r} + \var{n}) \kw{else} \num{0}))
  (\var{hαβ} : ∀ \var{n}, \var{β} \var{n} = ∑ \var{r} ∈ range (\var{n} + \num{1}), \var{α} \var{r} * \var{u} (\var{n} - \var{r}) * \var{v} (\var{n} + \var{r}))
  (\var{hγδ} : ∀ \var{n}, \var{γ} \var{n} = ∑' \var{r}, \kw{if} \var{n} ≤ \var{r} \kw{then} \var{δ} \var{r} * \var{u} (\var{r} - \var{n}) * \var{v} (\var{r} + \var{n}) \kw{else} \num{0}) :
  ∑' \var{n}, \var{α} \var{n} * \var{γ} \var{n} = ∑' \var{n}, \var{β} \var{n} * \var{δ} \var{n} :=
\end{code}
Lemma \ref{lem:bailey-basic} can be proved by exchanging the order of the double summation \cite[Theorem 3.1]{Andrews1986}. This can hold in the generality where $R$ is a topological commutative ring which is a $T_3$-space, where the latter condition is required to exchange the order of the double sum (\texttt{Summable.tsum\_comm}).
Then the standard proof of Lemma \ref{lem:q-bailey} follows by taking
\begin{equation}
    u_n = \frac{1}{(q)_n}, \quad v_n = \frac{1}{(aq)_n}, \quad \delta_n = \frac{(\rho_1)_n (\rho_2)_n (q^{-N})_n q^n}{(\rho_1 \rho_2 q^{-N} / a)_n}
\end{equation}
where the corresponding $\gamma_n$ is
\begin{equation}
    \gamma_n = \frac{(aq / \rho_1)_N (aq / \rho_2)_N(-1)^n (\rho_1)_n (\rho_2)_n(q^{-N})_n}{(aq)_N (aq/\rho_1\rho_2)_N (aq/\rho_1)_n (aq/\rho_2)_n(aq^{N+1})_n} \left(\frac{aq}{\rho_1\rho_2}\right)^n q^{nN - \frac{n(n-1)}{2}}
\end{equation}
by Theorem \ref{thm:qPS-original}.
Instead, we used Theorem \ref{thm:qPS-cleared} (\texttt{qPfaffSaalschutz\_denom\_cleared}) to prove Bailey's lemma.
In particular, we only need to assume the invertibility of $q$-Pochhammer symbols, and we do not need any topology on $R$.

The ``limiting'' version of Bailey's lemma is obtained by taking $\rho_1, \rho_2 \to \infty$. In particular, we have
\begin{equation}
    \lim_{a \to \infty} a^{-n} (a)_n = \lim_{n \to \infty} (a^{-1} - 1) (a^{-1} - q) \cdots (a^{-1} - q^{n-1}) = (-1)^{n} q^{\frac{n(n-1)}{2}}
\end{equation}
and
\begin{equation}
    \lim_{a \to 0} (a)_n = 1.
\end{equation}
After taking the limit, \eqref{eqn:bailey-alphaprime} and \eqref{eqn:bailey-betaprime} become
\begin{align}
    \alpha_n' &= \alpha_n \cdot (q^{\frac{n(n-1)}{2}})^2 \cdot a^n q^n = a^n q^{n^2} \alpha_n \\
    \beta_n' &= \sum_{j=0}^{n} \frac{q^{j(j-1)}}{(q)_{n-j}} \cdot a^j q^j\beta_j = \sum_{j=0}^{n} \frac{a^j q^{j^2}}{(q)_{n-j}} \beta_j
\end{align}
and we get the following lemma.
\begin{lemma}[Bailey's Lemma, $\rho_1, \rho_2 \to \infty$]
    \label{lem:bailey-limit-1}
    Let $(\alpha_n, \beta_n)$ be a Bailey pair.
    Define $\alpha_n'$ and $\beta_n'$ as
    \begin{align}
        \alpha_n' = a^n q^{n^2} \alpha_n, \quad \beta_n' = \sum_{j=0}^{n} \frac{a^j q^{j^2}}{(q)_{n-j}} \beta_j
    \end{align}
    Then $(\alpha_n', \beta_n')$ also forms a Bailey pair, i.e.
    \begin{equation}
    \label{eqn:bailey-limit-1}
        \sum_{j=0}^{n} \frac{\beta_j a^j q^{j^2}}{(q)_{n-j}} = \sum_{r=0}^{n} \frac{\alpha_r a^r q^{r^2}}{(q)_{n-r}(aq)_{n+r}} 
    \end{equation}
\end{lemma}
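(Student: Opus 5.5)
Since the paper's proof of Bailey's lemma is purely algebraic, passing to the limit $\rho_1,\rho_2\to\infty$ topologically would need extra hypotheses. Instead, I would prove \eqref{eqn:bailey-limit-1} directly as a finite identity by mirroring the proof of Lemma \ref{lem:q-bailey} with the limiting kernel. Substituting the Bailey pair relation $\beta_j=\sum_{r\le j}\alpha_r/((q)_{j-r}(aq)_{j+r})$ into the left side and swapping the finite double sum gives
\begin{equation*}
\sum_{j=0}^{n}\frac{a^jq^{j^2}\beta_j}{(q)_{n-j}}=\sum_{r=0}^{n}\alpha_r\sum_{j=r}^{n}\frac{a^jq^{j^2}}{(q)_{n-j}(q)_{j-r}(aq)_{j+r}}.
\end{equation*}
So it suffices to prove, for each $0\le r\le n$, the inner identity
\begin{equation*}
\sum_{j=r}^{n}\frac{a^jq^{j^2}}{(q)_{n-j}(q)_{j-r}(aq)_{j+r}}=\frac{a^rq^{r^2}}{(q)_{n-r}(aq)_{n+r}}.
\end{equation*}

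Writing $j=r+k$ and $m=n-r$, and using $(aq)_{2r+k}=(aq)_{2r}(aq^{2r+1})_k$ and $(aq)_{n+r}=(aq)_{2r}(aq^{2r+1})_m$, the inner identity reduces after cancellation to
\begin{equation*}
\sum_{k=0}^{m}\frac{(aq^{2r})^kq^{k^2}}{(q)_{m-k}(q)_k(aq^{2r+1})_k}=\frac{1}{(q)_m(aq^{2r+1})_m}.
\end{equation*}
Setting $b=aq^{2r}$ and multiplying by $(q)_m(bq)_m$, this becomes the finite, denominator-free identity
\begin{equation*}
\sum_{k=0}^{m}\qbin{m}{k}b^kq^{k^2}(bq^{k+1})_{m-k}=1.
\end{equation*}
This is the $\rho_1,\rho_2\to\infty$ degeneration of the $q$-Pfaff--Saalsch\"utz sum (Theorem \ref{thm:qPS-cleared}). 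I would prove it directly by induction on $m$, using the Pascal recurrence for $\qbin{m+1}{k}$ in the form used in the proof of Lemma \ref{lem:binomial_cauchy}, in the same spirit as that proof. A second route is to expand $(bq^{k+1})_{m-k}$ by Lemma \ref{lem:binom-thm}, apply Lemma \ref{lem:binomial_mul}, and observe that the resulting alternating sum $\sum_k\qbin{s}{k}(-1)^{s-k}q^{\binom{s-k}{2}}$ equals $(1;q)_s=\delta_{s0}$, again by Lemma \ref{lem:binom-thm}.

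Finally, the statement uses inverses, so I would assume as hypotheses that $(q)_k$ and $(aq)_k$ are units for all $k$, as in Lemma \ref{lem:q-bailey}. Under these hypotheses, the cancellations in the reduction step are ring identities involving bInv together with $(aq)_{2r+k}=(aq)_{2r}(aq^{2r+1})_k$.

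I expect the reduction to the single-variable identity and its verification to be the main obstacle. In particular, the induction for $\sum_k\qbin{m}{k}b^kq^{k^2}(bq^{k+1})_{m-k}=1$ needs careful bookkeeping of the power $q^{k^2}$ against the shift $(bq^{k+1})_{m-k}$. I would try the binomial-expansion route first, because it reuses Lemmas \ref{lem:binom-thm} and \ref{lem:binomial_mul} verbatim and avoids a delicate recurrence. The sum swap and the unit manipulations are routine.
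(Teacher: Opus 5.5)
Your proposal is correct and follows essentially the paper's proof: you substitute the Bailey-pair relation, swap the finite double sum, and reduce to the per-$r$ identity, whose denominator-free form $\sum_{k}\qbin{m}{k}b^kq^{k^2}(bq^{k+1})_{m-k}=1$ is exactly Lemma~\ref{lem:binomial-aux-1} with $z=bq=aq^{2r+1}$ and $n$ replaced by $m=n-r$. The paper proves that lemma by induction via $S(z,n+1)=(1-zq^n)S(z,n)+zq^nS(zq,n)$, and your binomial-expansion route via Lemmas~\ref{lem:binom-thm} and~\ref{lem:binomial_mul} also works; note that after factoring out $q^{\binom{s+1}{2}}$ the inner sum that actually appears is $\sum_k\qbin{s}{k}(-1)^{s-k}q^{\binom{k}{2}}=(-1)^s(1;q)_s$, which is still $\delta_{s0}$.
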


In Lean:

\begin{code}{RogersRamanujan/NumberTheory/QTheory/Bailey.lean}
\kw{theorem} Bailey.qBaileyLemma_limit \imp{R} [CommRing \var{R}] (\var{a} \var{q} : \var{R}) (\var{α} \var{β} \var{α'} \var{β'} : ℕ → \var{R})
  (\var{h} : IsBaileyPair \var{a} \var{q} \var{α} \var{β})
  (\var{hqpoc} : ∀ \var{k}, IsUnit (\var{q})_\var{k}) (\var{haqpoc} : ∀ \var{k}, IsUnit (\var{a} * \var{q})_\var{k})
  (\var{hα'} : ∀ \var{n}, \var{α'} \var{n} = \var{a} ^ \var{n} * \var{q} ^ (\var{n} ^ \num{2}) * \var{α} \var{n})
  (\var{hβ'} : ∀ \var{n}, \var{β'} \var{n} = ∑ \var{j} ∈ range (\var{n} + \num{1}), \var{a} ^ \var{j} * \var{q} ^ (\var{j} ^ \num{2}) * bInv (\var{q})_(\var{n} - \var{j}) * \var{β} \var{j}) :
  IsBaileyPair \var{a} \var{q} \var{α'} \var{β'} :=
\end{code}
If we further take the limit $n \to \infty$ in \eqref{eqn:bailey-limit-1}, we have $(a)_n \to (a)_\infty$ for any $a$, hence
\begin{lemma}[Bailey's Lemma, $\rho_1, \rho_2 \to \infty$, $n \to \infty$]
    \label{lem:bailey-limit-2}
    Let $(\alpha_n, \beta_n)$ be a Bailey pair. Then
    \begin{equation}
        \sum_{j=0}^{\infty} a^j q^{j^2} \beta_j = \frac{1}{(aq)_\infty} \sum_{r=0}^{\infty} a^r q^{r^2} \alpha_r
    \end{equation}
\end{lemma}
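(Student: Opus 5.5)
The plan is to pass to the limit $n \to \infty$ in the finite identity \eqref{eqn:bailey-limit-1} of Lemma \ref{lem:bailey-limit-1}, working in a complete strongly non-archimedean $T_2$ ring $R$ with $q$ topologically nilpotent. We assume that $(q)_k$ and $(aq)_k$ are units for all $k$, as well as $(q)_\infty$ and $(aq)_\infty$. We also assume $(\alpha_r)$ and $(\beta_j)$ are such that $a^r q^{r^2}\alpha_r \to 0$ and $a^j q^{j^2}\beta_j \to 0$. For instance, this holds if $\alpha,\beta$ take values in a bounded open subrng; in the Rogers--Ramanujan application it will be automatic from the $q$-adic degree.

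\textbf{Step 1: recast the finite identity.} Write both sides of \eqref{eqn:bailey-limit-1} as finite sums over $\N$ with kernels
\[
x_n(j) = \mathbbm{1}_{j\le n}\,(q)_{n-j}^{-1},\qquad y_n(r) = \mathbbm{1}_{r\le n}\,(q)_{n-r}^{-1}(aq)_{n+r}^{-1}.
\]
As $n\to\infty$, we have pointwise $x_n(j)\to (q)_\infty^{-1}$ and $y_n(r)\to (q)_\infty^{-1}(aq)_\infty^{-1}$. This uses the convergence $(a)_n\to(a)_\infty$ guaranteed by Lemma \ref{lem:poch-inf-exists} together with continuity of inversion on units. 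Continuity of inversion is the delicate topological point. To avoid it, I would first multiply the identity by $(q)_n$. The point is that $(q)_n/(q)_{n-j} = (q^{n-j+1})_j$, a finite Pochhammer product that tends to $1$ as $n\to\infty$ for fixed $j$. Likewise $(q)_n/((q)_{n-r}(aq)_{n+r}) = (q^{n-r+1})_r\,(aq)_{n+r}^{-1}$, so only the single inverse $(aq)_{n+r}^{-1}$ remains. That inverse can be handled by expanding $(aq)_{N}^{-1}$ via Lemma \ref{lem:poch-inf-inv}-style series, or simply by multiplying the whole identity also by $(aq)_{2n}$.

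\textbf{Step 2: dominated/uniform convergence of the sums.} Fix an open subrng neighbourhood $V$ of $0$ with the $V\cdot V\subseteq V$ property. Only finitely many terms $a^jq^{j^2}\beta_j$ lie outside $V$, and similarly for the $\alpha$-terms. The kernels $(q^{n-j+1})_j$ lie in $1+V$ (or at least in a bounded set preserved by the subrng), uniformly in $n,j$, since $q^m\in V$ for large $m$ and $V$ is closed under products. The tail contributions therefore stay in $V$. On the finitely many head terms we use the pointwise limits from Step 1. This gives
\[
(q)_n\sum_{j\le n} x_n(j)a^jq^{j^2}\beta_j \to \sum_j a^jq^{j^2}\beta_j
\]
and the analogous limit for the $\alpha$-side. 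Equating the two limits, and using that $R$ is $T_2$ so limits are unique, yields the claim.

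\textbf{Main obstacle.} I expect the hard step to be the uniform-in-$n$ control of the tails in Step 2, a Tannery-type theorem in the strongly non-archimedean setting. This is exactly where strong non-archimedeanity, and not mere non-archimedeanity, is needed: finite products of elements of $1+V$ must stay in $1+V$, and for that I would use $V\cdot V\subseteq V$. It is the same mechanism as in Theorem \ref{thm:strong-prop}.
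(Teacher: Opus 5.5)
Your argument is correct, but it is not how the paper proves the lemma. It is the informal argument the paper mentions just before the lemma (let $n\to\infty$ in \eqref{eqn:bailey-limit-1}) and then sets aside as hard to formalize over a general topological ring.

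\textbf{How the paper argues.} The paper never takes a limit inside a Bailey-pair identity. It applies the summation-interchange Lemma \ref{lem:bailey-basic} with $u_n=1/(q)_n$, $v_n=1/(aq)_n$, $\delta_n=a^nq^{n^2}$ and $\gamma_n=a^nq^{n^2}/(aq)_\infty$.
\begin{itemize}
\item \eqref{eqn:bailey-basic-1} is exactly the Bailey-pair condition.
\item \eqref{eqn:bailey-basic-2} is the conjugate identity $\sum_{r\ge n}a^rq^{r^2}/\bigl((q)_{r-n}(aq)_{r+n}\bigr)=a^nq^{n^2}/(aq)_\infty$. This is Lemma \ref{lem:binomial-aux-2} at $z=aq^{2n+1}$.
\end{itemize}
All the analysis is therefore confined to Lemma \ref{lem:binomial-aux-2}, an explicit identity with no $\alpha$ or $\beta$ in it. That lemma is proved by the same Tannery-type argument you describe, with the same kernel $(q^{n-k+1})_k$.

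\textbf{What each route buys.} In the paper, the Bailey data enter only through an algebraic exchange of summation. Its summability hypothesis follows from $a^rq^{r^2}\alpha_r\to0$ alone. So the formal statement assumes only that decay, together with $q$ and $aq$ topologically nilpotent (which also makes the relevant Pochhammer symbols invertible). It then \emph{deduces} convergence of the $\beta$-side. You must instead assume $a^jq^{j^2}\beta_j\to0$ separately. This is harmless for Rogers--Ramanujan, where $\beta'_j=1/(q)_j$. In exchange, your route is more economical: it reuses Lemma \ref{lem:bailey-limit-1} directly and needs no conjugate pair, at the price of a Tannery lemma for arbitrary null sequences.

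\textbf{A detail in Step 2 to repair.} The kernels $(q^{n-j+1})_j$ do \emph{not} lie in $1+V$ uniformly: for $j$ near $n$ they are close to $(q)_j$, not to $1$. Only your fallback, boundedness, is true, and it can be made precise as follows.
\begin{itemize}
\item Pick $N$ with $q^k\in V$ for $k\ge N$. Each kernel is then $P$ times an element of $1+V$, where $P$ is a product over a sub-interval of the finitely many factors $1-q^k$ with $k<N$. Since $V$ is a subrng, $(1+V)V\subseteq V$.
\item Choose a neighbourhood $W$ with $PW\subseteq V$ for every such $P$. Cut the tail where the terms lie in $W$, not merely in $V$.
\item Use that the open subgroup $V$ is closed, so the infinite tail $\sum_{j>J}$ stays in $V$.
\end{itemize}
On the $\alpha$-side with your $(aq)_{2n}$ normalisation, the extra factor $(aq^{n+r+1})_{n-r}$ does lie in $1+V$ uniformly in $r\le n$ once $n$ is large, so the same estimate applies. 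The final division by $(aq)_\infty$ uses your unit hypothesis.
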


In Lean:

\begin{code}{RogersRamanujan/NumberTheory/QTheory/Bailey.lean}
\kw{theorem} Bailey.qBaileyLemma_limit' \imp{R} [CommRing \var{R}] (\var{a} \var{q} : \var{R}) [UniformSpace \var{R}]
  [IsUniformAddGroup \var{R}] [CompleteSpace \var{R}] [T2Space \var{R}] [StrongNonarchimedeanRing \var{R}]
  (\var{α} \var{β} : ℕ → \var{R}) (\var{h} : IsBaileyPair \var{a} \var{q} \var{α} \var{β})
  (\var{hq} : IsTopologicallyNilpotent \var{q}) (\var{haq} : IsTopologicallyNilpotent (\var{a} * \var{q}))
  (\var{hα} : Tendsto (\kw{fun} \var{r} ↦ \var{a} ^ \var{r} * \var{q} ^ (\var{r} ^ \num{2}) * \var{α} \var{r}) atTop (\nhds \num{0})) :
  ∑' \var{j}, \var{a} ^ \var{j} * \var{q} ^ (\var{j} ^ \num{2}) * \var{β} \var{j} = bInv (\var{a} * \var{q})_∞ * ∑' \var{r}, \var{a} ^ \var{r} * \var{q} ^ (\var{r} ^ \num{2}) * \var{α} \var{r} :=
\end{code}

Although the above informal argument is easy to understand, formalizing it is not, since we are working over a general topological ring, not just for power series rings over $\mathbb{C}$.
Instead, we followed slightly different approaches, based on the following auxiliary lemmas. 

\begin{lemma} We have that
\label{lem:binomial-aux-1}
    \begin{equation}
    \label{eqn:binomial-aux-1}
        \sum_{k=0}^{n} z^k q^{k(k-1)} \begin{bmatrix} n \\ k \end{bmatrix}_q (zq^k)_{n-k} = 1
    \end{equation}
\end{lemma}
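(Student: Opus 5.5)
We prove \eqref{eqn:binomial-aux-1} by expanding the product $(zq^k)_{n-k}$, collecting powers of $z$, and recognising each coefficient as a finite $q$-binomial sum that vanishes. No topology is involved, so this is an identity in any commutative ring, or in the universal ring $\Z[z,q]$.

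First, apply the finite $q$-binomial theorem (Lemma \ref{lem:binom-thm}) with $a = zq^k$ and $n-k$ in place of $n$:
\begin{equation*}
(zq^k)_{n-k} = \sum_{j=0}^{n-k} \qbin{n-k}{j} (-1)^j z^j q^{kj + \binom{j}{2}}.
\end{equation*}
Substituting into the left-hand side gives a double sum over pairs $(k,j)$ with $k+j \le n$. Reindex by $m = k+j$ and use Lemma \ref{lem:binomial_mul} in the form $\qbin{n}{k}\qbin{n-k}{m-k} = \qbin{n}{m}\qbin{m}{k}$. The left-hand side becomes
\begin{equation*}
\sum_{m=0}^{n} \qbin{n}{m} z^m \sum_{k=0}^{m} \qbin{m}{k} (-1)^{m-k} q^{E(k,m)},
\qquad E(k,m) = k(k-1) + k(m-k) + \binom{m-k}{2}.
\end{equation*}

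Next, simplify the exponent. A short computation gives $E(k,m) = \binom{m}{2} + \binom{k}{2}$: expanding $\binom{m-k}{2} = \binom{m}{2} - mk + \binom{k+1}{2}$, the cross terms $\pm mk$ cancel and $-k + \binom{k+1}{2} = \binom{k}{2}$. Hence the coefficient of $z^m$ is
\begin{equation*}
(-1)^m q^{\binom{m}{2}} \qbin{n}{m} \sum_{k=0}^{m} \qbin{m}{k} (-1)^k q^{\binom{k}{2}} = (-1)^m q^{\binom{m}{2}} \qbin{n}{m} (1;q)_m,
\end{equation*}
where the equality is Lemma \ref{lem:binom-thm} again, now with $a = 1$. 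Since $(1;q)_m$ contains the factor $1 - 1 = 0$ for every $m \ge 1$, only the $m = 0$ term survives, and it equals $\qbin{n}{0} = 1$. This proves the identity.

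We expect the main obstacle to be the bookkeeping rather than any idea. The double-sum reindexing $(k,j) \mapsto (k, m)$ over a triangular region involves truncated natural-number subtraction such as $n-k-j$ and $m-k$. The identities $\qbin{n-k}{m-k} = \qbin{n-k}{j}$ and $\binom{m-k}{2}$ must be matched carefully in $\N$. In Lean it is probably cleanest to write the double sum over the antidiagonal (or over a sigma-type) and prove the exponent identity $2E(k,m) = m(m-1) + k(k-1)$ by a case-free \texttt{ring}/\texttt{omega} argument on $k \le m$.

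If the reindexing proves too painful, a fallback is induction on $n$. One would use the recursion $\qbinom{n+1}{k} = \qbinom{n}{k} + q^{n+1-k}\qbinom{n}{k-1}$ together with $(zq^k)_{n+1-k} = (zq^k)_{n-k}(1 - zq^{n})$. Splitting the sum for $n+1$ into the two pieces, the aim would be to show that the extra terms telescope.
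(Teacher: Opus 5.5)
Your proof is correct, but it takes a different route from the paper. The paper writes $S(z,n)$ for the left-hand side, proves the recurrence $S(z,n+1) = (1-zq^n)S(z,n) + zq^n S(zq,n)$, and inducts on $n$ with $z$ generalized, so that $S \equiv 1$. The recurrence comes from splitting $\qbin{n+1}{k} = \qbin{n}{k} + q^{n+1-k}\qbin{n}{k-1}$. The first piece is $(1-zq^n)S(z,n)$, because $(zq^k)_{n+1-k} = (zq^k)_{n-k}(1-zq^n)$. After shifting $k \mapsto k+1$, the second piece is exactly $zq^n S(zq,n)$.

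That is your fallback, with one correction: the extra terms do not telescope. They reassemble into $S(zq,n)$, so the induction hypothesis must be quantified over all $z$.

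Your main argument is instead induction-free. Two uses of Lemma \ref{lem:binom-thm}, together with Lemma \ref{lem:binomial_mul}, reduce the identity to $(1;q)_m = 0$ for $m \ge 1$. This shows why the identity holds, and it parallels the paper's own proof of Lemma \ref{lem:binomial_cauchy_2}. The cost is the triangular double-sum reindexing and the exponent identity $k(k-1) + k(m-k) + \binom{m-k}{2} = \binom{m}{2} + \binom{k}{2}$ in $\N$. The paper's recurrence needs only a Pascal rule and the one-step factorization of $(zq^k)_{n+1-k}$, with no double sum and no appeal to Lemma \ref{lem:binomial_mul}.
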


In Lean:

\begin{code}{RogersRamanujan/NumberTheory/QTheory/BinomialTheorem.lean}
\kw{theorem} qBinomial_qPochhammer_sum \imp{R} [CommRing \var{R}] \imp{q : R} (\var{n} : ℕ) (\var{z} : \var{R}) :
  ∑ \var{k} ∈ range (\var{n} + \num{1}),
    \var{z} ^ \var{k} * \var{q} ^ (\var{k} * (\var{k} - \num{1})) * qChoose \var{q} \var{n} \var{k} * (\var{z} * \var{q} ^ \var{k}; \var{q})_(\var{n} - \var{k}) = \num{1} :=
\end{code}
\begin{proof}
    Let $S(z, n)$ be the left hand side of \eqref{eqn:binomial-aux-1}.
    We can prove that
    \begin{equation}S(z, n+1) = (1 - zq^{n}) S(z, n) + zq^{n} S(zq, n),\end{equation} and applying induction on $n$ concludes the proof.
\end{proof}

\begin{lemma} We have that
\label{lem:binomial-aux-2}
    \begin{equation}
    \label{eqn:binomial-aux-2}
        \sum_{k=0}^{\infty} z^k q^{k(k-1)} \frac{(zq^k)_\infty}{(q)_k} = 1
    \end{equation}
\end{lemma}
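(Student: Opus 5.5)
The plan is to obtain the statement as the limit $n\to\infty$ of Lemma \ref{lem:binomial-aux-1}, working in a complete strongly non-archimedean $T_2$ ring $R$ with $q$ topologically nilpotent. Throughout, $(q)_k$ is assumed invertible (as is implicit in the statement via \texttt{bInv}).

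The first step rewrites the finite identity so that its dependence on $n$ is isolated. Using
\[
\qbin{n}{k} = \frac{(q^{n-k+1})_k}{(q)_k},
\]
which is the cleared form used in the proof of Theorem \ref{thm:qPS-cleared}, the $k$-th summand of \eqref{eqn:binomial-aux-1} becomes
\[
T_n(k) := z^k q^{k(k-1)} (q)_k^{-1}\, (q^{n-k+1})_k\, (zq^k)_{n-k}
\]
for $k\le n$. We set $T_n(k)=0$ for $k>n$. The identity then reads $\sum_k T_n(k) = 1$.

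The second step is termwise convergence. For fixed $k$, $(q^{n-k+1})_k \to 1$ as $n\to\infty$, since each factor $1-q^{n-k+1+i}\to 1$ and there are finitely many factors. Also $(zq^k)_{n-k}\to (zq^k)_\infty$ by Lemma \ref{lem:poch-inf-exists}. Hence $T_n(k)\to T(k):=z^kq^{k(k-1)}(q)_k^{-1}(zq^k)_\infty$, which is the summand in \eqref{eqn:binomial-aux-2}.

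The third and main step is justifying the exchange of limit and sum; this is the same difficulty flagged in the proof of Lemma \ref{lem:binom-thm-inf}. We want a uniform tail bound: for every open subrng $V$ there is $K$ with $T_n(k)\in V$ for all $n$ and all $k\ge K$. The factor $z^kq^{k(k-1)}$ tends to $0$ by Theorem \ref{thm:strong-prop}, applied to $(zq^{2j})_j$, which gives $z^kq^{k(k-1)}\to 0$. The remaining factor is
\[
(q)_k^{-1}(q^{n-k+1})_k(zq^k)_{n-k} = \qbin{n}{k}(zq^k)_{n-k},
\]
a polynomial expression in $q$, $z$ with integer coefficients. It does \emph{not} obviously stay bounded, since $z$ need not be topologically nilpotent.

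To handle this, I will first try to use a bounded open subrng: pick $V$ open with $VV\subseteq V$, and then $W\subseteq V$ open with $W\cdot \langle 1,z,q\rangle$-bounded. In general rings this fails, so the cleaner route is the universal-ring reduction. First prove the identity in $\Z[z]\ps{q}$ with the $q$-adic topology. There the summand $T_n(k)$ has $q$-adic valuation at least $k(k-1)$ uniformly in $n$, because $\qbin{n}{k}$ and $(zq^k)_{n-k}$ lie in $\Z[z][q]$. The tail bound is therefore automatic, and the identity passes to the limit coefficientwise in $q$.

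The fourth step transfers the result to a general $R$ by the continuous homomorphism $\Z[z]\ps{q}\to R$ sending $z\mapsto z$ and $q\mapsto q$. This map exists and is continuous because $q$ is topologically nilpotent and $R$ is complete and strongly non-archimedean. Functoriality of $(\cdot)_\infty$ (Lemma \ref{lem:poch-inf-func}) and continuity of infinite sums then give \eqref{eqn:binomial-aux-2} in $R$. Here $(q)_k^{-1}$ maps to $(q)_k^{-1}$, since $(q)_k$ is a unit in $\Z\ps{q}$.

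Constructing this evaluation map is the main expected obstacle. If it proves awkward, the fallback is to prove the tail bound directly in $R$ by expanding $(zq^k)_{n-k}$ via Lemma \ref{lem:binom-thm}. Each resulting term carries a factor $q^{k(k-1)+kj+\binom{j}{2}}z^{k+j}$ times an integer polynomial in $q$, and these terms tend to $0$ jointly in $(k,j)$ by Theorem \ref{thm:strong-prop}.
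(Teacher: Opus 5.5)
The gap is in your fourth step: a continuous ring homomorphism $\Z[z]\ps{q}\to R$ with $z\mapsto z$, $q\mapsto q$ need not exist. Theorem~\ref{thm:power-series-eval} requires the coefficient map $\Z[z]\to R$ to have bounded range. Topological nilpotence of $q$ says nothing about $z$, which is arbitrary in the lemma. For instance, take $R=\Q_p$ with $q=p$ and $z=p^{-1}$. The series $\sum_k z^{k^2}q^k$ converges $q$-adically in $\Z[z]\ps{q}$, yet its would-be image $\sum_k p^{k-k^2}$ diverges. Your uniform estimate ``$q$-adic valuation at least $k(k-1)$'' silently gives $z$ weight $0$. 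That is exactly what fails in $R$, where every power of $z$ has to be paid for by powers of $q$. So the universal-ring route proves the lemma only for power-bounded $z$. This is enough for the paper's application $z=aq^{2r+1}$ with $aq$ topologically nilpotent, but not for the statement as given.

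Your fallback is the argument that actually works, but it needs two ingredients you do not supply.

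First, $\Z[q]$ is bounded in $R$. Given a neighborhood $U$ of $0$, take an open subrng $W\subseteq U$, an $M$ with $q^i\in W$ for $i\ge M$, and an open subrng $V\subseteq W$ with $q^iV\subseteq W$ for $i<M$; then $\Z[q]\,V\subseteq W$. This is what absorbs the $n$-dependent coefficients $\qbin{n}{k}\qbin{n-k}{j}$ uniformly in $n$.

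Second, the joint decay of $z^{k+j}q^{E}$, with $E=k(k-1)+kj+\binom{j}{2}$, is not a consequence of Theorem~\ref{thm:strong-prop} as stated. Write the term as $(zq^{N})^{k+j}q^{E-N(k+j)}$, with $N$ chosen so that $zq^i,q^i\in V$ for $i\ge N$. Since $E$ has positive-definite quadratic part, $E-N(k+j)\ge N$ for every $j$ once $k$ is large.

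The paper avoids both issues with a neater reduction. Multiply Lemma~\ref{lem:binomial-aux-1} by $(zq^n)_\infty$ and use $(zq^k)_{n-k}(zq^n)_\infty=(zq^k)_\infty$. This gives $(zq^n)_\infty=\sum_{k=0}^n l_k\,(q^{n-k+1})_k$, where $l_k=z^kq^{k(k-1)}(zq^k)_\infty/(q)_k$ is independent of $n$. All dependence on $z$ then sits in a single fixed null sequence $l_k$, and the $n$-dependent factors involve only $q$. One just shows $(zq^n)_\infty\to 1$ and $\sum_{k\le n} l_k\bigl((q^{n-k+1})_k-1\bigr)\to 0$.

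Finally, you need not assume $(q)_k$ invertible. The series $(1-q^j)^{-1}=\sum_{i\ge0} q^{ij}$ converges in a complete non-archimedean ring, which is why the formal statement carries no such hypothesis.
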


In Lean:

\begin{code}{RogersRamanujan/NumberTheory/QTheory/BinomialTheorem.lean}
\kw{theorem} qBinomial_qPochhammer_tsum \imp{R} [CommRing \var{R}] [UniformSpace \var{R}]
  [IsUniformAddGroup \var{R}] [StrongNonarchimedeanRing \var{R}] [CompleteSpace \var{R}] [T2Space \var{R}]
  \imp{q z : R} (\var{hq} : IsTopologicallyNilpotent \var{q}) :
  ∑' \var{k}, \var{z} ^ \var{k} * \var{q} ^ (\var{k} * (\var{k} - \num{1})) * bInv (\var{q}; \var{q})_\var{k} * (\var{z} * \var{q} ^ \var{k}; \var{q})_∞ = \num{1} :=
\end{code}
\begin{proof}
    Define $l_k := z^k q^{k(k-1)} (zq^k)_\infty / (q)_k$. Using Lemma \ref{lem:binomial-aux-1}, we can prove that
    \begin{equation}
        (zq^n)_\infty = \sum_{k=0}^n l_k (q^{n-k+1})_k.
    \end{equation}
    Also, we have
    \begin{equation}
        \lim_{n \to \infty} \sum_{k=0}^{n} l_k ((q^{n-k+1})_k - 1) = 0,
    \end{equation}
    where combining these gives $\sum_{k=0}^{\infty} = 1$, which is \eqref{eqn:binomial-aux-2}.
\end{proof}

To prove Lemma \ref{lem:bailey-limit-1}, plugging in $\beta_n = \sum_{r=0}^{n} \frac{\alpha_r}{(q)_{n-r} (aq)_{n+r}}$ to \eqref{eqn:bailey-limit-1} and rearranging the sum gives
\begin{align*}
    \sum_{j=0}^{n} \sum_{r=0}^{j} \frac{\alpha_r}{(q)_{j-r} (aq)_{j+r}} \frac{a^j q^{j^2}}{(q)_{n-j}}  = \sum_{r=0}^{n} \alpha_r \sum_{j=r}^{n} \frac{a^j q^{j^2}}{(q)_{j-r} (aq)_{j+r} (q)_{n-j}} \stackrel{?}{=} \sum_{r=0}^{n} \frac{\alpha_r a^r q^{r^2}}{(q)_{n-r}(aq)_{n+r}},
\end{align*}
so it is enough to show that
\begin{equation}
    \sum_{j=r}^{n} \frac{a^j q^{j^2}}{(q)_{j-r}(aq)_{j+r}(q)_{n-j}} = \sum_{j=0}^{n-r} \frac{a^{j+r} q^{(j+r)^2}}{(q)_j (aq)_{j+2r}(q)_{n-r-j}} = \frac{a^r q^{r^2}}{(q)_{n-r}(aq)_{n+r}},
\end{equation}
which follows from Lemma \ref{lem:binomial-aux-1} applied to $z = aq^{2r+1}$ and $n \leftarrow n + r$.

One can prove Lemma \ref{lem:bailey-limit-1} (\texttt{qBaileyLemma\_limit}) by rearranging double sum and applying Lemma \ref{lem:binomial-aux-1} for $z = aq^{2r + 1}$.
The proof of Lemma \ref{lem:bailey-limit-2} (\texttt{qBaileyLemma\_limit'}) uses Lemma \ref{lem:bailey-basic}, by applying it to the sequences
\begin{align*}
    u_n = \frac{1}{(q;q)_n}, \quad
    v_n = \frac{1}{(aq;q)_n}, \quad
    \gamma_n = \frac{a^n q^{n^2}}{(aq;q)_{\infty}}, \quad
    \delta_n = a^n q^{n^2}
\end{align*}
In this case, the equation \eqref{eqn:bailey-basic-1} corresponds to $(\alpha_n, \beta_n)$ being a Bailey pair (with respect to $a$), while \eqref{eqn:bailey-basic-2} corresponds to the identity
\begin{equation}
    \label{eqn:bailey_aux}
    \frac{a^n q^{n^2}}{(aq)_n} = \sum_{r=n}^{\infty} \frac{a^r q^{r^2}}{(q)_{r-n} (aq)_{r+n}}
\end{equation}
which follows from Lemma \ref{lem:binomial-aux-2}, applied to $z = aq^{2n+1}$.

\subsection{A strategy for generalizing to non-archimedean rings}

Identities involving $(a; q)_\infty$ where $q \in R$ is topologically nilpotent require $R$ to be a complete strongly non-archimedean ring in order for $(a; q)_\infty$ to exist. But in some applications, everything involved is topologically nilpotent. In that case, we will be able to drop the ``strongly'' condition from the theorems.

This is possible using a technical construction, which we will call a ``partial'' universal property of (multivariate) power series.

\begin{definition}
    Let $R$ be a set with multiplication, zero, and a topology. Then a subset $S \subseteq R$ is called \textbf{(topologically) bounded} if for every neighborhood $U$ of $0_R$, there exists a neighborhood $V$ of $0_R$ such that $S V \subseteq U$. See \cite[Definition 8.3.8]{GR2004}.
\end{definition}

In Lean:

\begin{code}{RogersRamanujan/Topology/Algebra/Nonarchimedean/Bounded.lean}
\kw{structure} Set.TopologicallyBounded
    \imp{R} [Zero \var{R}] [Mul \var{R}] [TopologicalSpace \var{R}] (\var{S} : Set \var{R}) : Prop \kw{where}
  exists_mul_subset \imp{U : Set R} (\var{hU} : \var{U} ∈ \nhds \num{0}) : ∃ \var{V} ∈ \nhds \num{0}, (\var{S} * \var{V} : Set \var{R}) ⊆ \var{U}
\end{code}

Now we state the ``partial universal property'':
\begin{theorem}
    \label{thm:power-series-eval}
    Let $R$ be a ring, and $S$ be a non-archimedean ring, and $(x_i)_{i \in I}$ be a sequence in $S$ that is ``jointly topologically nilpotent'', and $f: R \to S$ be a ring homomorphism with bounded range. Then we obtain a canonical continuous ring homomorphism $\overline{f}: R\ps{x_i \mid i \in I} \to S$ sending $x_i$ to $x_i$ and $r \in R$ to $f(r)$.
    The topology on $R\ps{x_i}$ here is the jointly $x_i$-adic topology.
\end{theorem}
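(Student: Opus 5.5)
The plan is to define $\overline{f}$ as an explicit evaluation: for a power series $F = \sum_{\mu} c_\mu x^\mu$, with $\mu$ ranging over finitely supported multi-indices $I \to \N$, set
\[
\overline{f}(F) := \sum_{\mu} f(c_\mu)\, x^\mu ,
\]
where $x^\mu = \prod_i x_i^{\mu_i}$ is computed in $S$. Two gaps remain. The statement does not assume completeness of $S$, so I will either take completeness as an implicit standing hypothesis, as in the earlier results, or read the sum as the limit in the completion. I also need to fix what ``jointly topologically nilpotent'' means. I take it to mean: for every neighborhood $U$ of $0$ there is $N$ such that $x^\mu \in U$ whenever $|\mu| \ge N$, and moreover the family $\{x^\mu\}$ is bounded, with only finitely many $x_i$ outside any given $U$ when $I$ is infinite.

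\emph{Step 1: summability.} Given an open additive subgroup $U$, boundedness of the range of $f$ gives a neighborhood $V$ with $f(R)\,V \subseteq U$. Joint nilpotence gives $x^\mu \in V$ for all but finitely many $\mu$. Hence all but finitely many terms $f(c_\mu)x^\mu$ lie in $U$. In a complete non-archimedean group, this Cauchy-type condition over the cofinite filter yields unconditional summability, since finite sums of elements of $U$ stay in $U$.

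\emph{Step 2: algebra.} Additivity of $\overline{f}$ and $\overline{f}(1)=1$ are immediate from linearity of $\sum$ and from $f$ being a ring homomorphism. For multiplicativity, $\overline{f}(FG) = \sum_\nu \sum_{\mu+\mu'=\nu} f(c_\mu)f(d_{\mu'})x^\nu$, which I will identify with the Cauchy product $\overline{f}(F)\overline{f}(G)$. This requires summability of the family $(\mu,\mu') \mapsto f(c_\mu)f(d_{\mu'})x^{\mu}x^{\mu'}$ on $\mathbb{N}^{(I)} \times \mathbb{N}^{(I)}$, so that it can be regrouped and also split as a product of two sums. Summability again follows from boundedness: the products $f(c_\mu)f(d_{\mu'})$ lie in $f(R)$, which is bounded, and $x^{\mu+\mu'}$ is small outside a finite set. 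Because $S$ is only assumed Hausdorff-complete and non-archimedean, the regrouping is the standard $\texttt{tsum\_mul\_tsum}$ / $\texttt{tsum\_sigma}$ argument. The hypothesis used there is exactly that the product family is summable, which in a complete non-archimedean ring follows from the terms tending to zero.

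\emph{Step 3: continuity and uniqueness.} For continuity, given $U$, choose $V$ as in Step 1 and $N$ with $x^\mu \in V$ for $|\mu|\ge N$ (plus the finite-variable condition). If $F$ lies in the ideal generated by the relevant monomials, then $F$ is in the corresponding basic $x$-adic neighborhood, every term of $\overline{f}(F)$ lies in $U$, and so $\overline{f}(F) \in \overline{U}$. Since closed open subgroups form a basis, this gives continuity. Canonicity follows because polynomials are dense in $R\ps{x_i}$ and a continuous homomorphism is determined by its values there, assuming $S$ is $T_2$.

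\emph{Main obstacle.} I expect the hardest part to be Step 2: justifying the rearrangement of the double sum in a bare non-archimedean ring with no norm. Multiplication is continuous but not uniformly so, so boundedness of $f(R)$ must be used carefully to show that the product family tends to zero along the cofinite filter. I would first prove a general lemma: if $A$ is bounded and $a_\mu \to 0$, then $A\cdot a_\mu \to 0$ uniformly. Steps 1–3 all reduce to this lemma.
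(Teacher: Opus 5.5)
Your proposal is correct. Note that the paper never proves this statement: it only records the Lean signature of \texttt{MvPowerSeries.eval}, whose instance arguments include \texttt{CompleteSpace S} and \texttt{T2Space S}. Your argument therefore has to stand on its own, and it does. The direct evaluation $\overline f(F)=\sum_\mu f(c_\mu)x^\mu$ is the natural construction, with boundedness of $f(R)$ making all but finitely many terms lie in a given open subgroup.

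Your caveat about completeness is essential, not pedantic: without it the printed statement is false. Take $S=\Z[q]$ with the $q$-adic topology and $f\colon \Z\to S$ the inclusion (bounded range, $q$ topologically nilpotent). Any ring homomorphism $\Z\ps{q}\to S$ with $q\mapsto q$ would make $1-q$ a unit in $\Z[q]$, which is impossible. So assuming $S$ complete and Hausdorff, exactly as the formal version does, is the right repair. Mapping into the completion instead changes the codomain.

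Two smaller remarks. First, in the finite case, which is the only one the paper uses, the hypothesis is just that each $x_i$ is topologically nilpotent. You instead build ``$x^\mu\to 0$ as $|\mu|\to\infty$'' into your definition, so you should supply the short bridge:
\begin{itemize}
\item If $x^n\to 0$, then $\{x^n\}$ is bounded: continuity of multiplication at $(0,0)$ handles all large $n$, and the finitely many small powers are handled one at a time.
\item Products of bounded sets are bounded.
\item Hence, given $U$, for each $j$ there is $N_j$ with $x^\mu\in U$ whenever $\mu_j\ge N_j$, so only the finite box $\prod_j[0,N_j)$ is exceptional.
\end{itemize}

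Second, Step~2 is easier than you expect. Since $f(c_\mu)f(d_{\mu'})=f(c_\mu d_{\mu'})\in f(R)$ and each antidiagonal $\{\mu+\mu'=\nu\}$ is finite, the product family tends to $0$ along the cofinite filter by the same one-line argument as Step~1. It is therefore summable, and a Hausdorff topological group is automatically $T_3$. So \texttt{tsum\_mul\_tsum} and fiberwise regrouping apply with no further work.

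Your direct approach also shows clearly why bounded range is the right hypothesis. In the linearly topologized setting, open ideals absorb every coefficient automatically. Here boundedness of $f(R)$ is exactly what replaces that absorption.
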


We do not need the full generality of the above theorem since our index sets in our application are always finite, in which case the undefined ``jointly topologically nilpotent'' condition is equivalent to each element being topologically nilpotent. In Lean:

\begin{code}{RogersRamanujan/RingTheory/MvPowerSeries/Evaluation.lean}
\kw{def} MvPowerSeries.eval \imp{R S σ} [Semiring \var{R}] [CommRing \var{S}] [UniformSpace \var{S}]
  [IsUniformAddGroup \var{S}] [NonarchimedeanRing \var{S}] [CompleteSpace \var{S}] [T2Space \var{S}]
  (\var{f} : \var{R} →+* \var{S}) (\var{q} : \var{σ} → \var{S}) : MvPowerSeries \var{σ} \var{R} →+* \var{S} :=
\end{code}

\section{The formalized proof of Jacobi Triple Product}\label{sec:JTP}

Jacobi Triple Product is the first major stress test for this work because its proof forces the formal development to coordinate several forms of the same identity.  On paper, one often passes freely between theta series, products indexed from $0$ or $1$, and normalized variants obtained by replacing $z$ with $-z$ or shifting the exponent of $q$.  In Lean, these normalizations become explicit lemmas about reindexing, multiplication by units, and equality of infinite products or formal series.  This section describes the chosen proof of \Cref{thm:JTP}, the exact formal statement from which it is derived, and the lemmas that make the statement usable in later applications.

\subsection{The mathematical proof selected for formalization}

We first restate the theorem in terms of $q$-Pochhammer symbols to connect to more of the existing infrastructure:
\begin{equation}\label{eqn:jtp-a-q}
    (q)_\infty (a)_\infty (a^{-1}q)_\infty = \sum_{n \in \Z} (-a)^n q^{\frac{n(n-1)}2}
\end{equation}

Then we make the observation that the choice of having $a$ and $a^{-1}q$ here is rather arbitrary:
\begin{enumerate}
    \item We require $a$ to be invertible here, but we don't really use its inverse $a^{-1}$, but rather we just use $a^{-1}q$. (This holds true even for the right hand side!)
    \item In actual applications for example, we might apply this to $(q^5; q^5)_\infty (q^2; q^5)_\infty (q^3; q^5)_\infty$, inside say $\Z\ps{q}$, where if we take the variables literally we end up with $(q, a) := (q^5, q^2)$, but $q^2$ is not invertible!
\end{enumerate} 

In light of these observations we make the following generalization, for $b, c, q \in R$ satisfying $bc=q$:
\begin{equation}
    \label{eqn:jtp-general}
    (q)_\infty (b)_\infty (c)_\infty = \sum_{n \in \Z} e_{-b,-c}(n) q^{\binom{|n|}{2}}
\end{equation}
where:
\begin{equation}
    e_{b,c}(n) := \begin{cases} b^n & n \ge 0 \\ c^{-n} & n \le 0 \end{cases}
\end{equation}

We note further that there is a natural imbalance in the amount of negative signs, and putting the negative signs on the left hand side vs. on the right hand side produces equivalent mathematical statements, but correspond to different Lean statements. Therefore, in the Lean code, we provide both versions (see Theorem \ref{thm:jtp-general}).

\begin{assumption}
We assume that $R$ is a complete strongly non-archimedean ring and $q$ is topologically nilpotent, but we do not put any assumptions on $b$ and $c$.
\end{assumption}

In the following we will explain the proof, as well as how its ``core essence'' is a countable family of power series identities.

\begin{lemma}
    \label{lem:jtp-a-q}
    Assume that $a$ and $q$ are both topologically nilpotent and invertible. Then Equation \ref{eqn:jtp-a-q} holds, i.e.:
    \begin{equation}(q)_\infty (a)_\infty (a^{-1}q)_\infty = \sum_{n \in \Z} (-a)^n q^{\frac{n(n-1)}2}\end{equation}
\end{lemma}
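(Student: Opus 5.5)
The plan is to prove a finite, purely algebraic identity and then pass to the limit, rather than using a functional-equation argument. For $N \in \N$, apply the finite $q$-Binomial theorem (Lemma \ref{lem:binom-thm}) with $n = 2N$ and parameter $x = aq^{-N}$, which makes sense because $q$ is invertible:
\begin{equation*}
(aq^{-N})_{2N} = \sum_{k=0}^{2N} \qbin{2N}{k} (-aq^{-N})^k q^{\binom{k}{2}} .
\end{equation*}
Split the left side as $(aq^{-N})_{2N} = (aq^{-N})_N\,(a)_N$. Factoring $-aq^{-j}$ out of each factor $1-aq^{-j}$ gives
\begin{equation*}
(aq^{-N})_N = \prod_{j=1}^N (1-aq^{-j}) = (-a)^N q^{-N(N+1)/2}\,(a^{-1}q)_N .
\end{equation*}
On the right, reindex by $k = N+n$ with $-N \le n \le N$. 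The exponent bookkeeping $\binom{N+n}{2} - N(N+n) = \binom{n}{2} - \tfrac{N(N+1)}{2}$ makes the prefactor $(-a)^N q^{-N(N+1)/2}$ cancel. This leaves the finite identity
\begin{equation*}
(a)_N\,(a^{-1}q)_N = \sum_{n=-N}^{N} \qbin{2N}{N+n} (-a)^n q^{\binom{n}{2}} ,
\end{equation*}
which holds in any commutative ring with $a,q$ units. I would prove it in the universal ring $\Z[a^{\pm1},q^{\pm1}]$ and transport it, or simply prove it directly by the computation above.

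Next, multiply by $(q)_\infty$ and let $N \to \infty$. The left side becomes $(q)_\infty (a)_N (a^{-1}q)_N$, which tends to $(q)_\infty(a)_\infty(a^{-1}q)_\infty$ by Lemma \ref{lem:poch-inf-exists}. No nilpotence of $a$ is needed for this step, only that of $q$. On the right, I would use $\qbin{2N}{N+n}(q)_{N+n}(q)_{N-n} = (q)_{2N}$ together with the fact that $(q)_\infty$ is a unit, which follows from Lemma \ref{lem:poch-inf-inv} applied with $a=q$. I then aim to show
\begin{equation*}
(q)_\infty\qbin{2N}{N+n} - 1 \longrightarrow 0
\end{equation*}
uniformly in $n$ on the relevant range, and this is where the nonarchimedean structure is used. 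The target value $\sum_{n\in\Z}(-a)^n q^{\binom n2}$ must also converge. For $n \ge 0$ the terms tend to $0$ by Theorem \ref{thm:strong-prop}. For $n=-m<0$ the term equals $\prod_{i=0}^{m-1}(-a^{-1}q^{i+1})$, a product of a sequence tending to $0$, so it again tends to $0$ by Theorem \ref{thm:strong-prop}. Completeness then gives summability.

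The main obstacle is the interchange of limit and sum, i.e.\ controlling
\begin{equation*}
\sum_{|n|\le N} c_n\Bigl((q)_\infty\qbin{2N}{N+n} - 1\Bigr), \qquad c_n = (-a)^n q^{\binom n2}.
\end{equation*}
I would handle it with a tail/head split in an open subrng $V$. Given $V$, choose $M$ so that $c_n \in V$ for $|n| > M$. The factors $(q)_\infty\qbin{2N}{N+n}$ lie in a fixed bounded set, being quotients of partial Pochhammer products that are units congruent to $1$ modulo small neighbourhoods. Using the subrng property, the tail then lies in $V$ after shrinking $V$ appropriately. For the finitely many $|n| \le M$, I would use
\begin{equation*}
(q)_\infty\qbin{2N}{N+n} = \frac{(q)_\infty\,(q)_{2N}}{(q)_{N+n}\,(q)_{N-n}} \longrightarrow 1 ,
\end{equation*}
where each ratio $(q)_\infty/(q)_{N\pm n} = (q^{N\pm n+1})_\infty$ tends to $1$ by Lemma \ref{lem:shift}. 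The expected difficulty in the formalization is the boundedness claim in the tail estimate. If it proves awkward, I would first establish the identity in the universal setting $\Z[a^{\pm1}]\ps{q}$ (Laurent in $a$), where coefficientwise stabilisation makes the limit trivial, and then transport it with Theorem \ref{thm:power-series-eval} and the functoriality Lemma \ref{lem:poch-inf-func}.
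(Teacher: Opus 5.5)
Your route is correct, but it is genuinely different from the paper's. The paper follows Chan's argument. It expands $(a^{-1}q)_\infty$ by Lemma \ref{lem:poch-inf}, multiplies by $(q)_\infty$, and uses Lemma \ref{lem:shift} to produce $(q^{m+1})_\infty$. It then extends the sum to $m\in\Z$ because $(q^{m+1})_\infty=0$ for $m<0$, re-expands, reindexes by $n=-(m+k)$, and recognizes $(a)_\infty^{-1}$ via Lemma \ref{lem:poch-inf-inv}.

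Every step in the paper's proof is a manipulation of convergent sums with lemmas already in the library, plus a double-sum interchange. The price is that Lemma \ref{lem:poch-inf-inv} forces $a$ to be topologically nilpotent, and $q$ must be a unit for $(q^{m+1})_\infty$ with $m<0$ to make sense.

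Your Cauchy-style argument puts all the algebra into a finite identity. That identity actually holds in $\Z[b,c]$ with $q=bc$: substitute $b=a$, $c=a^{-1}q$, and note that $b^ic^j\mapsto a^{i-j}q^j$ is injective on monomials. The analysis is then concentrated in a single limit interchange. Since you never use nilpotence of $a$, the same argument would give Theorem \ref{thm:jtp-general} directly, with no conditions on $b,c$, bypassing the coefficient extraction of Lemma \ref{lem:jtp-iden}. It supplies the kind of finite universal core the paper says it could not find for this identity. The cost is the dominated-convergence step.

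On that step, your stated reason for boundedness is not quite right, since $1-q$ need not be close to $1$. Here is a clean version. You have $(q)_\infty\qbin{2N}{N+n}=(q^{N+n+1})_\infty\,(q^{N-n+1})_{N+n}$, with no inverses involved. So all these elements lie in the closure of $\{f(q): f\in\Z[X]\}$. That set is topologically bounded: $\{q^k\}$ is a convergent sequence, hence bounded, and in a non-archimedean ring both additive closure and topological closure preserve boundedness. The same formula handles the head. For fixed $n$ and large $N$, every factor $1-q^j$ has $j\ge N-|n|+1$ and so lies in $1+V$ for a given open subrng $V$. The set $1+V$ is multiplicatively closed and topologically closed, so the infinite product stays in it.

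Your fallback would not work as stated. Theorem \ref{thm:power-series-eval} needs $\Z[a^{\pm1}]\to R$ to have bounded range. When $a$ is a topologically nilpotent unit, the powers $a^{-k}$ are typically unbounded (e.g.\ $a=p$ in $\Q_p$), so there is no continuous evaluation $\Z[a^{\pm1}]\ps{q}\to R$ to transport along. This is essentially the obstruction the paper alludes to, so the direct limit argument is the one to carry out.
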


In our formalization we put the negative signs on the left hand side instead:

\begin{code}{RogersRamanujan/NumberTheory/QTheory/JacobiTripleProduct/NilpotentUnit.lean}
\kw{theorem} jacobi_triple_product_units_of_isTopologicallyNilpotent \imp{R} [CommRing \var{R}]
  [UniformSpace \var{R}] [IsUniformAddGroup \var{R}] [CompleteSpace \var{R}] [StrongNonarchimedeanRing \var{R}]
  [T2Space \var{R}]
  \imp{a q : Rˣ} (\var{hq} : IsTopologicallyNilpotent (\var{q} : \var{R}) := \kw{by} \kw{simp})
  (\var{ha} : IsTopologicallyNilpotent (\var{a} : \var{R}) := \kw{by} \kw{simp}) :
  ((\var{q} : \var{R}))_∞ * ((-(\var{a}⁻¹ * \var{q}) : \var{R}))_∞ * ((-\var{a} : \var{R}))_∞ =
  ∑' \var{n} : ℤ, (↑(\var{a} ^ \var{n}) * ↑(\var{q} ^ choose \var{n} \num{2}) : \var{R}) :=
\end{code}
\begin{proof}
    This follows \cite[Chapter 9]{Chan2009}. We first expand $(a^{-1}q)_\infty$ using Lemma \ref{lem:poch-inf}:
    \begin{equation}
        (a^{-1}q)_\infty = \sum_{m=0}^\infty (q)_m^{-1} (-a^{-1}q)^m q^{\binom{m}{2}}
    \end{equation}
    Now multiply by $(q)_\infty$, and transform using the shifting formula (Lemma \ref{lem:shift}):
    \begin{equation}
        (q)_\infty (a^{-1}q)_\infty = \sum_{m=0}^\infty (q^{m+1})_\infty (-a^{-1}q)^m q^{\binom{m}{2}}
    \end{equation}
    Now the key of this proof: since $(q^{m+1})_\infty = 0$ for $m < 0$, we are free to take the sum over $m \in \Z$ instead:
    \begin{equation}
        (q)_\infty (a^{-1}q)_\infty = \sum_{m \in \Z} (q^{m+1})_\infty (-a^{-1}q)^m q^{\binom{m}{2}}
    \end{equation}
    And now once again expand $(q^{m+1})_\infty$ using Lemma \ref{lem:poch-inf}:
    \begin{equation}
        (q)_\infty (a^{-1}q)_\infty = \sum_{m \in \Z} \sum_{k=0}^\infty (q)_k^{-1} (-q^{m+1})^k q^{\binom{k}{2}} (-a^{-1}q)^m q^{\binom{m}{2}}
    \end{equation}
    Grouping terms and simplifying:
    \begin{equation}
        (q)_\infty (a^{-1}q)_\infty = \sum_{m \in \Z} \sum_{k=0}^\infty (q)_k^{-1} (-1)^{k+m} a^{-m} q^{\binom{k+m+1}{2}}
    \end{equation}
    Using the variable transformation $n := -(m + k)$ (eliminating $m$), noting $\binom{-n+1}{2} = \binom{n}{2}$:
    \begin{equation}
        (q)_\infty (a^{-1}q)_\infty = \sum_{n \in \Z} \sum_{k=0}^\infty (q)_k^{-1} (-a)^n a^k q^{\binom{n}{2}}
    \end{equation}
    Factoring out into double sum and then using the expansion of $(a)_\infty^{-1}$ in Lemma \ref{lem:poch-inf-inv}:
    \begin{equation}
        (q)_\infty (a^{-1}q)_\infty = (a)_\infty^{-1} \sum_{n \in \Z} (-a)^n q^{\binom{n}{2}}
    \end{equation}
    which upon rearranging gives us the identity desired, i.e.:
    \begin{equation}(q)_\infty (a)_\infty (a^{-1}q)_\infty = \sum_{n \in \Z} (-a)^n q^{\frac{n(n-1)}2}\end{equation}
\end{proof}

Now we extract the ``core essence'' of this theorem, which is a countable family of power series identities, that will assist us in proving the more general theorem:

\begin{lemma}
    \label{lem:jtp-iden}
    We have a family of identities indexed by $\ell \in \N$:
    \begin{equation}
        \sum_{k=0}^\infty (q)_k^{-1} (q)_{k+\ell}^{-1} q^{k(k+\ell)} = (q)_\infty^{-1}
    \end{equation}
\end{lemma}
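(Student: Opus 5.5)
Multiplying through by $(q)_\infty$, the identity becomes
\begin{equation*}
\sum_{k=0}^\infty \frac{(q)_\infty}{(q)_{k+\ell}}\,\frac{q^{k(k+\ell)}}{(q)_k} = 1 .
\end{equation*}
Using the shifting lemma (Lemma \ref{lem:shift}) we have $(q)_\infty/(q)_{k+\ell} = (q^{k+\ell+1})_\infty$, so the claim reads
\begin{equation*}
\sum_{k=0}^\infty \frac{q^{k(k+\ell)}}{(q)_k}\,(q^{\ell+1}\cdot q^{k})_\infty = 1 .
\end{equation*}
This is precisely Lemma \ref{lem:binomial-aux-2} with $z = q^{\ell+1}$. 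Indeed, $z^k q^{k(k-1)} = q^{k(\ell+1)+k^2-k} = q^{k(k+\ell)}$ and $(zq^k)_\infty = (q^{k+\ell+1})_\infty$. So the plan is: specialize Lemma \ref{lem:binomial-aux-2} at $z=q^{\ell+1}$, rewrite the exponent, rewrite each $(q^{k+\ell+1})_\infty$ as $(q)_{k+\ell}^{-1}(q)_\infty$ via the shifting lemma, factor out $(q)_\infty$ from the (convergent) sum, and multiply by $(q)_\infty^{-1}$.

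The setting is a complete strongly non-archimedean ring with $q$ topologically nilpotent, as in the standing assumption. Lemma \ref{lem:binomial-aux-2} requires only this. The real hypotheses needed are invertibility statements: $(q)_k$ and $(q)_{k+\ell}$ must be units for the inverses (rather than junk values of \texttt{bInv}) to be meaningful, and $(q)_\infty$ must be a unit. Each factor $1-q^i$ with $i\ge 1$ is a unit, since its inverse $\sum_j q^{ij}$ converges because $q^i$ is topologically nilpotent. Hence the finite products $(q)_k$ are units. For $(q)_\infty$, Lemma \ref{lem:poch-inf-inv} with $a=q$ exhibits a two-sided inverse $\sum_m (q)_m^{-1}q^m$; equivalently, one can use Theorem \ref{thm:q-binomial-series} with $a=1$.

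To pull $(q)_\infty$ out of the series I will use that $\sum_k \ldots$ is summable (it is by Lemma \ref{lem:binomial-aux-2}, or directly since the terms tend to $0$) and that multiplication by a fixed element is continuous, so constants commute with \texttt{tsum}.

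The main obstacle is bookkeeping rather than mathematics. The identity $z^kq^{k(k-1)} = q^{k(k+\ell)}$ involves natural-number subtraction $k-1$, which must be handled separately at $k=0$. In addition, the shifting lemma's statement carries the hypothesis that $(q)_\infty$ is a unit, so that hypothesis has to be established first, as above. Once these are in place, the rest is a direct rewrite.
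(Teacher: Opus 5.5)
Your argument is correct, but it takes a different route from the paper. The paper derives Lemma \ref{lem:jtp-iden} from the triple product itself:
\begin{itemize}
\item it starts from the unit case, Lemma \ref{lem:jtp-a-q} (with $a\mapsto -a$), and moves $(q)_\infty$ across;
\item it expands $(-a)_\infty(-a^{-1}q)_\infty$ as a double series via Lemma \ref{lem:poch-inf};
\item it extracts the coefficient of $a^\ell$ (the pairs $(k,m)=(j+\ell,j)$) and cancels $q^{\binom{\ell}{2}}$.
\end{itemize}
That route needs the Chan-style proof of the unit case. It also needs an ambient ring in which $a$ is an invertible formal variable, so that $a^\ell$-coefficients make sense.

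You instead recognize the identity as Lemma \ref{lem:binomial-aux-2} at $z=q^{\ell+1}$ combined with the shifting lemma. Equivalently, it is Cauchy's identity $\sum_k a^kq^{k^2}/((q)_k(aq)_k)=1/(aq)_\infty$ at $a=q^\ell$, i.e.\ the Durfee-rectangle identity. Your route has several advantages:
\begin{itemize}
\item it is shorter and reuses a lemma the paper already proves for Bailey's lemma;
\item it needs no Laurent-type ring and no coefficient extraction;
\item it holds directly in every complete strongly non-archimedean ring with $q$ topologically nilpotent.
\end{itemize}
It also has a structural payoff. The paper obtains Theorem \ref{thm:jtp-general} from Lemma \ref{lem:jtp-iden} by reversing the expansion steps, so your route would give the general triple product without Lemma \ref{lem:jtp-a-q} at all.

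What the paper's route buys is conceptual. It shows this family of identities is exactly the coefficientwise content (the ``core essence'') of the triple product, so the equivalence of the two statements is visible. Your route shows that this core is already a special case of Cauchy's identity.
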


In Lean:

\begin{code}{RogersRamanujan/NumberTheory/QTheory/JacobiTripleProduct/PowerSeriesIdentity.lean}
\kw{theorem} tsum_bInv_qPochhammer_mul_bInv_qPochhammer_mul_pow (\var{R}) [CommRing \var{R}] (\var{n} : ℕ) :
  ∑' \var{k}, bInv (X (R := \var{R}); X)_\var{k} * bInv (X; X)_(\var{k} + \var{n}) * X ^ (\var{k} * (\var{k} + \var{n})) = bInv (X; X)_∞ :=
\end{code}
    
\begin{proof}
    Move $(q)_\infty$ to the right hand side from Lemma \ref{lem:jtp-a-q} to obtain (using $a := -a$):
    \begin{equation}
        (-a)_\infty (-a^{-1}q)_\infty = (q)_\infty^{-1} \sum_{n \in \Z} a^n q^{\frac{n(n-1)}2}
    \end{equation}
    Expand the left hand sides using the expansion for infinite $q$-Pochhammer (Lemma \ref{lem:poch-inf}):
    \begin{equation}
        \sum_{k=0}^\infty \sum_{m=0}^\infty (q)_k^{-1} (q)_m^{-1} a^k (a^{-1} q)^m q^{\binom{k}{2} + \binom{m}{2}} = (q)_\infty^{-1} \sum_{n \in \Z} a^n q^{\frac{n(n-1)}2}
    \end{equation}
    Extract the $a^\ell$ coefficient from both sides. For the left hand side, each summand gives $a^{k-m}$, so the solutions to $k-m = \ell$ are indexed by $(k,m) := (j+\ell, j)$ where $j \in \N$ is a free variable.
    \begin{equation}
        \sum_{j=0}^\infty (q)_{j+\ell}^{-1} (q)_j^{-1} q^{j + \binom{j+\ell}{2} + \binom{j}{2}} = (q)_\infty^{-1} q^{\binom{\ell}{2}}
    \end{equation}
    Canceling $q^{\binom{\ell}{2}}$ from both sides gives the identity as desired:
    \begin{equation}
        \sum_{j=0}^\infty (q)_{j+\ell}^{-1} (q)_j^{-1} q^{j(j+\ell)} = (q)_\infty^{-1}
    \end{equation}
\end{proof}

We note further that extracting the $a^{-\ell}$ coefficients does not actually give us more identities. Moreover, all the coefficients here are natural numbers, so they are in fact identities in $\N\ps{q}$ (or less strongly, $\Z\ps{q}$). By Strategy \ref{thm:power-series-eval}, we can in fact generalize these identities to any non-archimedean ring. Finally, since the coefficients are in $\N$, these identities can be interpreted combinatorially.

Then we can reverse all of the steps above to arrive at the more generalized form above:
\begin{theorem}
    \label{thm:jtp-general}
    Equation \ref{eqn:jtp-general} holds. Namely, we have that
    \begin{equation}(q)_\infty (b)_\infty (c)_\infty = \sum_{n \in \Z} e_{-b,-c}(n) q^{\binom{|n|}{2}}\end{equation}
    where
    \begin{equation}e_{b,c}(n) := \begin{cases} b^n & n \ge 0 \\ c^{-n} & n \le 0 \end{cases}\end{equation}
\end{theorem}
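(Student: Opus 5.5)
We prove Theorem \ref{thm:jtp-general} by running the argument of Lemma \ref{lem:jtp-iden} backwards. The identities of Lemma \ref{lem:jtp-iden} are identities in $\Z\ps{q}$ with nonnegative coefficients. By Theorem \ref{thm:power-series-eval}, applied to the evaluation $\Z\ps{X}\to R$ sending $X\mapsto q$ (continuous, since $q$ is topologically nilpotent), they transfer to $R$. Lemma \ref{lem:poch-inf-func} identifies the image of $(X;X)_\infty$ with $(q)_\infty$. We thus obtain, for every $\ell\in\N$,
\[
\sum_{k=0}^\infty (q)_k^{-1}(q)_{k+\ell}^{-1} q^{k(k+\ell)} = (q)_\infty^{-1}
\]
in $R$. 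Here $(q)_\infty$ is a unit, being the image of a unit of $\Z\ps{X}$. No invertibility of $b$ or $c$ is used anywhere.

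Next we expand the left-hand side of \eqref{eqn:jtp-general}, with the signs moved as in the Lean version, using Lemma \ref{lem:poch-inf}:
\[
(-b)_\infty(-c)_\infty=\sum_{k,m\ge0}(q)_k^{-1}(q)_m^{-1}b^kc^m q^{\binom k2+\binom m2}.
\]
This double series is summable in the complete strongly non-archimedean ring $R$: its terms tend to $0$ because $q^{\binom k2+\binom m2}\to0$ as $k+m\to\infty$ (Theorem \ref{thm:strong-prop}), while the coefficients $(q)_k^{-1}$ lie in a bounded set. We then regroup by $\ell=k-m\in\Z$. For $\ell\ge0$, write $(k,m)=(j+\ell,j)$ and use $bc=q$ to get $b^{j+\ell}c^j=b^\ell q^j$. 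The exponent becomes
\[
\binom{j+\ell}2+\binom j2+j=j(j+\ell)+\binom\ell2 .
\]
By the transferred identity, the $\ell$-th group sums to $b^\ell q^{\binom\ell2}(q)_\infty^{-1}$. Symmetrically, for $\ell<0$ the group sums to $c^{|\ell|}q^{\binom{|\ell|}2}(q)_\infty^{-1}$. Multiplying through by $(q)_\infty$ gives $(q)_\infty(-b)_\infty(-c)_\infty=\sum_{n\in\Z}e_{b,c}(n)q^{\binom{|n|}2}$. Substituting $b\mapsto -b$, $c\mapsto -c$ (which preserves $bc=q$) yields \eqref{eqn:jtp-general}.

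The main obstacle is the analytic bookkeeping in the regrouping step, since $b$ and $c$ are arbitrary and not topologically nilpotent. I would establish unconditional summability of the family over $\N^2$ directly. Given an open subrng $V$ contained in a neighborhood of $0$, boundedness of $\{b^k c^m (q)_k^{-1}(q)_m^{-1}\}$ is not available, so instead I rewrite each term as $(q)_k^{-1}(q)_m^{-1}b^{k-m}q^{m}q^{\binom k2+\binom m2}$ (for $k\ge m$, and symmetrically for $k<m$). I then check that $b^{\ell}q^{\binom{\ell}2}\to0$, which follows from Theorem \ref{thm:strong-prop} applied to $a=b$. After that, the remaining factor $q^{j(j+\ell)}$ is handled uniformly in $\ell$. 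With summability in hand, the sigma/regrouping lemma for unconditional sums (valid since $R$ is complete, Hausdorff and non-archimedean) gives the rearrangement over the fibres $k-m=\ell$, and the rest is algebra.
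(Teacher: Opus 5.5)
Your proposal is correct and follows the paper's route: transfer the identities of Lemma \ref{lem:jtp-iden} from $\Z\ps{q}$ to $R$ via Theorem \ref{thm:power-series-eval} and Lemma \ref{lem:poch-inf-func}, then run the coefficient-extraction argument backwards by expanding $(-b)_\infty(-c)_\infty$ as a double sum, regrouping along $k-m=\ell$ using $bc=q$, and substituting $(b,c)\mapsto(-b,-c)$. The summability sentence in your second paragraph ignores the unbounded factor $b^kc^m$, but your last paragraph repairs this, and the repair is simpler than you expect: each single-series term $(q)_k^{-1}b^kq^{\binom k2}$ and $(q)_m^{-1}c^mq^{\binom m2}$ tends to $0$ by Theorem \ref{thm:strong-prop}, and in any topological ring, $x_k\to0$ and $y_m\to0$ already force $x_ky_m\to0$ along the cofinite filter on $\N^2$.
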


In Lean, we have two variants depending on whether the negative sign is:

\begin{code}{RogersRamanujan/NumberTheory/QTheory/JacobiTripleProduct/Basic.lean}
\kw{def} abPow \imp{R} [Pow \var{R} ℕ] (\var{a} \var{b} : \var{R}) (\var{n} : ℤ) : \var{R} := \kw{match} \var{n} \kw{with}
  | (n : ℕ) => \var{a} ^ \var{n}
  | Int.negSucc \var{n} => \var{b} ^ (\var{n} + \num{1})

\kw{theorem} jacobi_triple_product \imp{R} [CommRing \var{R}] [UniformSpace \var{R}]
  [IsUniformAddGroup \var{R}] [CompleteSpace \var{R}] [StrongNonarchimedeanRing \var{R}] [T2Space \var{R}]
  \imp{a b q : R} (\var{hq} : IsTopologicallyNilpotent \var{q}) (\var{hab} : \var{a} * \var{b} = \var{q}) :
  (\var{q}; \var{q})_∞ * (-\var{a}; \var{q})_∞ * (-\var{b}; \var{q})_∞ = ∑' \var{n} : ℤ, abPow \var{a} \var{b} \var{n} * \var{q} ^ \var{n}.\prop{natAbs}.\prop{choose} \num{2} :=

\kw{theorem} jacobi_triple_product' \imp{R} [CommRing \var{R}] [UniformSpace \var{R}]
  [IsUniformAddGroup \var{R}] [CompleteSpace \var{R}] [StrongNonarchimedeanRing \var{R}] [T2Space \var{R}]
  \imp{a b q : R} (\var{hq} : IsTopologicallyNilpotent \var{q}) (\var{hab} : \var{a} * \var{b} = \var{q}) :
  (\var{q}; \var{q})_∞ * (\var{a}; \var{q})_∞ * (\var{b}; \var{q})_∞ = ∑' \var{n} : ℤ, abPow (-\var{a}) (-\var{b}) \var{n} * \var{q} ^ \var{n}.\prop{natAbs}.\prop{choose} \num{2} :=
\end{code}

\section{The formalized proof of the Rogers--Ramanujan identities}\label{sec:RR}

The Rogers--Ramanujan identities are the main demonstration that the preceding definitions form a working theory.  Their proof is not simply a long calculation: it uses a transformation mechanism, keeps track of two parallel $q$-series, and ends by identifying the resulting products with the two residue classes modulo $5$ appearing in \Cref{thm:RR_Identities}.  The formal proof therefore tests whether the library can support the standard mathematical workflow: define the relevant Bailey pair, apply Bailey's Lemma, simplify the resulting sums, invoke the product machinery, and present the final identities as clean formal-power-series equalities.

\subsection{The Bailey-pair input for the two identities}

For the first Rogers--Ramanujan identity, we use the following Bailey pair with respect to $a = 1$:
\begin{align}
    \alpha_{1, n} &= \begin{cases}
        1 & n = 0 \\
        (-1)^n q^{\binom{n}{2}} (1 + q^n) & n > 0
    \end{cases} \\
    \beta_{1, n} &= \delta_{n,0} = \begin{cases} 1 & n = 0 \\ 0 & n > 0 \end{cases}
\end{align}

\begin{theorem}
    \label{thm:RR1-bailey-pair} We have that the sequence
    $(\alpha_{1,n}, \beta_{1,n})$ forms a Bailey pair, where $\alpha_{1,0} = 1$ and
    \begin{equation}
        0 =  \frac{1}{(q)_{n}^2} + \sum_{r=1}^{n} \frac{(-1)^r q^{\binom{r}{2}}(1 + q^r)}{(q)_{n-r}(q)_{n+r}}
    \end{equation}
    for $n > 0$.
\end{theorem}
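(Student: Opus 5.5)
The plan is to reduce the statement to a finite $q$-binomial identity and then use the vanishing of a $q$-Pochhammer product at a root. With $a=1$ the Bailey pair condition reads $\beta_n=\sum_{r=0}^{n}\alpha_r/((q)_{n-r}(q)_{n+r})$. For $n=0$ it gives $\beta_0=\alpha_0/(q)_0^2=1$, which is immediate. For $n>0$ we must show that the displayed sum vanishes.

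\emph{Step 1: symmetrize.} Split $(1+q^r)$, so that $(-1)^rq^{\binom r2}(1+q^r)=(-1)^rq^{r(r-1)/2}+(-1)^{r}q^{r(r+1)/2}$. The second piece equals $(-1)^{s}q^{s(s-1)/2}$ with $s=-r$. The denominator $(q)_{n-r}(q)_{n+r}$ is invariant under $r\mapsto -r$. The $r=0$ term $1/(q)_n^2$ supplies the missing middle term. Hence the right-hand side equals the bilateral finite sum
\[
\sum_{r=-n}^{n}\frac{(-1)^r q^{r(r-1)/2}}{(q)_{n-r}(q)_{n+r}} .
\]
In Lean this reindexing over $\Z$ (or over $\mathrm{range}(2n+1)$ with $k=n+r$) is bookkeeping, but it is the fiddliest part.

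\emph{Step 2: clear denominators.} Multiply by $(q)_{2n}$, which is invertible under the standing hypothesis that all $(q)_k$ are units. Use $(q)_{2n}/((q)_{k}(q)_{2n-k})=\qbin{2n}{k}$ and set $k=n+r$. It then suffices to prove, in any commutative ring,
\[
\sum_{k=0}^{2n}(-1)^{k-n}q^{\binom{k-n}{2}}\qbin{2n}{k}=0\qquad(n\ge1),
\]
where $\binom{k-n}{2}=(k-n)(k-n-1)/2\ge 0$ is an integer exponent.

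\emph{Step 3: the core identity.} Using $\binom{k-n}{2}=\binom k2-nk+\binom{n+1}{2}$, the sum becomes $(-1)^nq^{\binom{n+1}2}\sum_k\qbin{2n}{k}(-q^{-n})^kq^{\binom k2}$. By the $q$-binomial theorem (Lemma \ref{lem:binom-thm}) with $a=q^{-n}$, this equals $(-1)^nq^{\binom{n+1}{2}}(q^{-n};q)_{2n}$. That product contains the factor $1-q^{-n}q^{n}=0$ because $n<2n$, so the sum vanishes.

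Since $q$ need not be a unit in $R$, I would carry out Step 3 in the universal setting. First prove the identity in the Laurent polynomial ring $\Z[q,q^{-1}]$. Both sides actually lie in $\Z[q]$, and the inclusion $\Z[q]\hookrightarrow\Z[q,q^{-1}]$ is injective, so the identity holds in $\Z[q]$. Then transfer it to $R$ along the evaluation homomorphism $\Z[q]\to R$, using the functoriality of $\qbin{n}{k}$.

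I expect the main obstacle to be this transfer together with the integer-exponent bookkeeping in Steps 1 and 3, rather than any real mathematics. As a fallback avoiding inverses entirely, I would prove Step 2's identity directly by pairing the terms $k$ and $2n-k$ via the symmetry $\qbin{2n}{k}=\qbin{2n}{2n-k}$, or by induction using the recursion for $\qbin{\cdot}{\cdot}$.
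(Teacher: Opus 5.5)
Your proposal is correct and follows essentially the paper's route. The paper defers to Andrews and names the finite $q$-binomial theorem as the key input, and your steps are exactly that argument: symmetrize to the bilateral sum, clear denominators to $\sum_{k=0}^{2n}(-1)^{k-n}q^{\binom{k-n}{2}}\qbin{2n}{k}$, and recognize this as $(-1)^n q^{\binom{n+1}{2}}(q^{-n};q)_{2n}=0$; your passage through $\Z[q]\subset\Z[q,q^{-1}]$ correctly handles the fact that $q$ need not be a unit in $R$. One caveat: drop the suggested fallback of pairing $k$ with $2n-k$, since the paired terms have the same sign and merely recombine into the factor $(1+q^r)$ you started from.
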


In Lean:

\begin{code}{RogersRamanujan/NumberTheory/QTheory/RogersRamanujan.lean}
\kw{def} RogersRamanujan.α₁ \imp{R} [Ring \var{R}] (\var{q} : \var{R}) (\var{n} : ℕ) : \var{R} :=
  \kw{if} \var{n} = \num{0} \kw{then} \num{1} \kw{else} (-\num{1}) ^ \var{n} * \var{q} ^ \var{n}.\prop{choose} \num{2} * (\num{1} + \var{q} ^ \var{n})

\kw{def} RogersRamanujan.β \imp{R} [Zero \var{R}] [One \var{R}] (\var{n} : ℕ) : \var{R} := \kw{if} \var{n} = \num{0} \kw{then} \num{1} \kw{else} \num{0}

\kw{theorem} isBaileyPair_α₁_β \imp{R} [CommRing \var{R}] \imp{q : R} (\var{hpu} : ∀ \var{k}, IsUnit (\var{q}; \var{q})_(\var{k})) :
  IsBaileyPair \num{1} \var{q} (α₁ \var{q}) β :=
\end{code}
\begin{proof}
    See \cite[Section 3.5]{Andrews1986}.
    Proof uses $q$-Binomial theorem.
\end{proof}

The following Bailey pair is used for the second Rogers--Ramanujan identity with respect to $a = q$ (here $\beta_{2, n} = \beta_{1,n}$):
\begin{align}
    \alpha_{2,n} &= (-1)^n q^{\binom{n}{2}} [2n+1]_q = \frac{(-1)^n q^{\binom{n}{2}}(1 - q^{2n+1})}{1 - q} \\
    \beta_{2,n} &= \delta_{n,0} = \begin{cases}
        1 & n = 0 \\ 0 & n > 0
    \end{cases}
\end{align}
\begin{theorem} 
    \label{thm:RR2-bailey-pair} We have that the sequence
    $(\alpha_{2,n}, \beta_{2,n})$ forms a Bailey pair, where $\alpha_{2,0} = 1$ and 
    \begin{equation}
        0 = \sum_{r=0}^{n} \frac{(-1)^r q^{\binom{r}{2}}[2r+1]_q}{(q)_{n-r}(q^2)_{n+r}}
    \end{equation}
    for $n > 0$.
\end{theorem}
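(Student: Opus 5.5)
The plan is to clear denominators, fold the two halves of $1-q^{2r+1}$ into a single sum of length $2n+2$, and recognize that sum as a finite $q$-Pochhammer symbol with a vanishing factor. Throughout, take $a=q$, so that $(aq)_{n+r}=(q^2)_{n+r}$.

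\textbf{Step 1: clear denominators.} Use $(q^2)_{n+r}(1-q)=(q)_{n+r+1}$ and $[2r+1]_q(1-q)=1-q^{2r+1}$. Then the claimed identity is equivalent to
\begin{equation*}
\sum_{r=0}^{n} \frac{(-1)^r q^{\binom{r}{2}}(1-q^{2r+1})}{(q)_{n-r}(q)_{n+r+1}}=0 .
\end{equation*}
The two factors of $(1-q)$ cancel exactly, so no inverse of $1-q$ is needed. Multiplying by $(q)_{2n+1}$ and using $\qbin{2n+1}{n-r}=\frac{(q)_{2n+1}}{(q)_{n-r}(q)_{n+r+1}}$, it suffices to show
\begin{equation*}
\sum_{r=0}^{n} (-1)^r q^{\binom{r}{2}}(1-q^{2r+1})\qbin{2n+1}{n-r}=0 .
\end{equation*}
This is a polynomial identity in $q$. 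I would prove it in $\Z[q]$ and then transport it, using functoriality and the assumed invertibility of the $(q)_k$ to undo the clearing of denominators.

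\textbf{Step 2: merge the two halves into one sum.} Split off the $-q^{2r+1}$ part and reindex it by $s=-r-1$, with $s$ running from $-n-1$ to $-1$. Three facts make this work:
\begin{itemize}
\item $\binom{s}{2}=\binom{r+1}{2}+r+1=\binom{r}{2}+2r+1$ (extending $\binom{s}{2}=s(s-1)/2$ to negative $s$);
\item $(-1)^s=-(-1)^r$;
\item by symmetry, $\qbin{2n+1}{n-s}=\qbin{2n+1}{n+r+1}=\qbin{2n+1}{n-r}$.
\end{itemize}
Together these turn the second half into the terms $s<0$ of a single sum, giving
\begin{equation*}
\sum_{s=-n-1}^{n} (-1)^s q^{\binom{s}{2}} \qbin{2n+1}{n-s}=0 .
\end{equation*}

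\textbf{Step 3: reduce to the $q$-binomial theorem.} Put $k=n-s\in[0,2n+1]$ and expand
\begin{equation*}
\binom{n-k}{2}=\binom{n}{2}+\binom{k}{2}-(n-1)k .
\end{equation*}
Up to the overall factor $(-1)^nq^{\binom n2}$, the sum becomes
\begin{equation*}
\sum_{k=0}^{2n+1}\qbin{2n+1}{k}\bigl(-q^{1-n}\bigr)^k q^{\binom{k}{2}}=(q^{1-n};q)_{2n+1},
\end{equation*}
where the equality is Lemma~\ref{lem:binom-thm}. Since $n\ge1$, the factor with $j=n-1\le 2n$ is $1-q^{1-n}q^{n-1}=0$, so the product vanishes.

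\textbf{Main obstacle.} The hard part is the exponent bookkeeping in Steps 2 and 3. The reindexing passes through negative indices, and $q^{1-n}$ is a negative power of $q$. To avoid requiring $q$ to be a unit, I would multiply through by $q^{(n-1)(2n+1)}$ so that every exponent is nonnegative. Alternatively, I would carry out Steps 2 and 3 in the Laurent polynomial ring $\Z[q^{\pm1}]$, where the identity is a genuine equality, and then pull it back to $\Z[q]$ by injectivity. The $\Z[q]$ identity then maps to $R$, and the invertibility hypotheses on $(q)_k$ recover the stated form with $\mathrm{bInv}$.
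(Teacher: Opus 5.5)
Your proof is correct. Clearing $(1-q)$ to reach $\qbin{2n+1}{n-r}$, folding the $-q^{2r+1}$ half onto $s\in[-n-1,-1]$, and recognizing $(-1)^nq^{\binom n2}(q^{1-n};q)_{2n+1}$ all check out. The $j=n-1$ factor of that product vanishes, and working in $\Z[q^{\pm1}]$ (or clearing by $q^{(n-1)(2n+1)}$ in the domain $\Z[q]$) correctly handles the negative exponent.

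The paper gives no argument of its own here beyond citing Andrews, noting the $q$-binomial theorem only for the companion $a=1$ pair. Your proof is exactly that standard $q$-binomial-theorem verification, and the trivial $n=0$ case is covered by $\alpha_{2,0}=1$.
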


In Lean:

\begin{code}{RogersRamanujan/NumberTheory/QTheory/RogersRamanujan.lean}
\kw{def} RogersRamanujan.α₂ \imp{R} [CommRing \var{R}] (\var{q} : \var{R}) (\var{n} : ℕ) : \var{R} :=
  (-\num{1}) ^ \var{n} * \var{q} ^ \var{n}.\prop{choose} \num{2} * qInt \var{q} (\num{2} * \var{n} + \num{1})

\kw{theorem} isBaileyPair_α₂_β \imp{R} [CommRing \var{R}] \imp{q : R} (\var{hpu} : ∀ \var{k}, IsUnit (\var{q}; \var{q})_(\var{k})) :
  IsBaileyPair \var{q} \var{q} (α₂ \var{q}) β :=
\end{code}
\begin{proof}
    See \cite[Section 3.5]{Andrews1986}.
\end{proof}


\subsection{Informal proof of the Rogers--Ramanujan identities}

To derive the Rogers--Ramanujan identities, we apply Bailey's lemma \emph{twice} to the above Bailey pairs $(\alpha_{1,n}, \beta_{1,n})$ and $(\alpha_{2,n},\beta_{2,n})$.
More precisely, if we apply Lemma \ref{lem:bailey-limit-1} to $(\alpha_{1,n}, \beta_{1, n})$, we obtain a new Bailey pair
\begin{align}
    \alpha_{1,n}' &= \begin{cases}
        1 & n = 0 \\ q^{n^2} \cdot (-1)^n q^{\binom{n}{2}} (1 + q^n) & n > 0
    \end{cases} \\
    \beta_{1,n}' &= \frac{1}{(q)_n}.
\end{align}

In Lean:

\begin{code}{RogersRamanujan/NumberTheory/QTheory/RogersRamanujan.lean}
\kw{def} RogersRamanujan.αt₁ \imp{R} [Ring \var{R}] (\var{q} : \var{R}) (\var{n} : ℕ) : \var{R} := \var{q} ^ (\var{n} ^ \num{2}) * α₁ \var{q} \var{n}

\kw{def} RogersRamanujan.βt \imp{R} [CommRing \var{R}] (\var{q} : \var{R}) (\var{n} : ℕ) : \var{R} := bInv (\var{q}; \var{q})_\var{n}

\kw{theorem} isBaileyPair_αt₁_βt \imp{R} [CommRing \var{R}] \imp{q : R} (\var{hpu} : ∀ \var{k}, IsUnit (\var{q}; \var{q})_(\var{k})) :
    IsBaileyPair \num{1} \var{q} (αt₁ \var{q}) (βt \var{q}) := 
  qBaileyLemma_limit \num{1} \var{q} (α₁ \var{q}) β (αt₁ \var{q}) (βt \var{q})
    (isBaileyPair_α₁_β \var{hpu}) \var{hpu} (\kw{by} \kw{simpa}) (\kw{by} \kw{simp} [αt₁]) (\kw{by} \kw{simp} [βt, β])
\end{code}
Applying Lemma \ref{lem:bailey-limit-2} to $(\alpha_{1,n}', \beta_{1,n}')$ gives
\begin{align}
    \sum_{j=0}^{\infty} \frac{q^{j^2}}{(q)_j} &= \frac{1}{(q)_\infty} \sum_{r=0}^{\infty} q^{r^2} \alpha_{1,r}' \nonumber \\
    &= \frac{1}{(q)_\infty} + \frac{1}{(q)_\infty} \sum_{r=1}^{\infty} (-1)^r q^{2r^2 + \binom{r}{2}} (1 + q^r) \nonumber \\
    &= \frac{1}{(q)_\infty}\left(1 + \sum_{r=1}^{\infty} (-1)^r q^{\frac{5}{2}r^2 - \frac{r}{2}} + \sum_{r=1}^{\infty} (-1)^r q^{\frac{5}{2}r^2 + \frac{r}{2}}\right) \nonumber \\
    &= \frac{1}{(q)_\infty} \sum_{r \in \mathbb{Z}} (-1)^r q^{\frac{5}{2}r^2 - \frac{r}{2}} \label{eqn:RR1-sum}
\end{align}
Setting $(a, b, q) := (q^2, q^3, q^5)$ in the Jacobi triple product identity gives
\begin{align}
    (q^5;q^5)_\infty (q^2;q^5)_\infty (q^3;q^5)_\infty &= \sum_{n \ge 0} (-q^2)^{n} q^{5\binom{n}{2}} + \sum_{n > 0} (-q^3)^{n} q^{5\binom{n}{2}} \nonumber \\
    &= \sum_{n \ge 0} (-1)^n q^{\frac{5}{2}n^2 - \frac{n}{2}} + \sum_{n > 0} (-1)^n q^{\frac{5}{2}n^2 + \frac{n}{2}} \nonumber \\
    &= \sum_{n \in \mathbb{Z}} (-1)^n q^{\frac{5}{2}n^2 - \frac{n}{2}} \label{eqn:RR1-JTP}
\end{align}
Comparing \eqref{eqn:RR1-sum} and \eqref{eqn:RR1-JTP} gives the first Rogers--Ramanujan identity
\begin{equation}
    G(q) = \sum_{j=0}^{\infty} \frac{q^{j^2}}{(q;q)_j} = \frac{(q^5;q^5)_\infty (q^2;q^5)_\infty (q^3;q^5)_\infty}{(q;q)_\infty} = \frac{1}{(q;q^5)_\infty (q^4;q^5)_\infty}.
\end{equation}
where the last equality follows from the splitting lemma (Lemma \ref{lem:split}):
\begin{equation}
    (q;q)_\infty = (q;q^5)_\infty (q^2;q^5)_\infty (q^3;q^5)_\infty (q^4;q^5)_\infty (q^5;q^5)_\infty. \label{eqn:qpoch_prod5}
\end{equation}

In Lean:

\begin{code}{RogersRamanujan/NumberTheory/QTheory/RogersRamanujan.lean}
\kw{theorem} first_rogers_ramanujan_hasSum_strong \imp{R} [CommRing \var{R}] [UniformSpace \var{R}]
  [IsUniformAddGroup \var{R}] [CompleteSpace \var{R}] [T2Space \var{R}] [StrongNonarchimedeanRing \var{R}]
  \imp{q : R} (\var{hq} : IsTopologicallyNilpotent \var{q}) :
  HasSum (\kw{fun} \var{j} ↦ \var{q} ^ \var{j} ^ \num{2} * bInv (\var{q}; \var{q})_\var{j}) (bInv (\var{q}; \var{q}^\num{5})_∞ * bInv (\var{q}^\num{4}; \var{q}^\num{5})_∞) :=
\end{code}

Similarly, applying Lemma \ref{lem:bailey-limit-1} to $(\alpha_{2,n},\beta_{2,n})$ gives a new Bailey pair (with respect to $a = q$)
\begin{align}
    \alpha_{2,n}' &= (-1)^n q^{n^2 + n + \binom{n}{2}} [2n+1]_q \\
    \beta_{2,n}' &= \frac{1}{(q)_n}.
\end{align}

In Lean:

\begin{code}{RogersRamanujan/NumberTheory/QTheory/RogersRamanujan.lean}
\kw{def} RogersRamanujan.αt₂ \imp{R} [CommRing \var{R}] (\var{q} : \var{R}) (\var{n} : ℕ) : \var{R} :=
  (-\num{1}) ^ \var{n} * \var{q} ^ (\var{n} ^ \num{2} + \var{n} + \var{n}.\prop{choose} \num{2}) * qInt \var{q} (\num{2} * \var{n} + \num{1})

\kw{theorem} isBaileyPair_αt₂_βt \imp{R} [CommRing \var{R}] \imp{q : R} (\var{hpu} : ∀ \var{k}, IsUnit (\var{q}; \var{q})_(\var{k})) :
    IsBaileyPair \var{q} \var{q} (αt₂ \var{q}) (βt \var{q}) :=
  qBaileyLemma_limit \var{q} \var{q} (α₂ \var{q}) β (αt₂ \var{q}) (βt \var{q}) (isBaileyPair_α₂_β \var{hpu}) \var{hpu}
    (\kw{fun} \var{k} ↦ And.right <| \kw{by} \kw{simpa} [qPochhammer_succ] \kw{using} \var{hpu} (\var{k} + \num{1}))
    (\kw{by} \kw{grind} [αt₂, α₂]) (\kw{by} \kw{simp} [β, βt])
\end{code}
Applying Lemma \ref{lem:bailey-limit-2} to $(\alpha_{2,n}', \beta_{2,n}')$ gives
\begin{equation}
    \sum_{j=0}^{\infty} \frac{q^{j^2 + j}}{(q;q)_j} = \frac{1}{(q^2;q)_\infty} \sum_{r=0}^{\infty} q^{r^2 + r} \alpha_{2,r}' = \frac{1 - q}{(q;q)_\infty} \sum_{r=0}^{\infty}(-1)^r q^{\frac{5}{2}r^2 + \frac{3}{2}r} [2r+1]_q
\end{equation}
By $(1 - q) \cdot [2r+1]_q = 1 - q^{2r+1}$, the right hand side becomes
\begin{equation}
\label{eqn:RR2-sum}
    \frac{1}{(q;q)_\infty} \sum_{r =0}^{\infty} (-1)^r q^{\frac{5}{2}r^2 + \frac{3}{2}r} (1 - q^{2r + 1}) = \frac{1}{(q;q)_\infty} \sum_{r \in \mathbb{Z}} (-1)^r q^{\frac{5}{2}r^2 + \frac{3}{2}r}.
\end{equation}
Applying the Jacobi triple product identity to $a = q$ and $b = q^4$ (and again $q \leftarrow q^5$) gives
\begin{equation}
\label{eqn:RR2-JTP}
    (q^5;q^5)_\infty (q;q^5)_\infty (q^4;q^5)_\infty = \sum_{n \in \mathbb{Z}} (-1)^n q^{\frac{5}{2}n^2 + \frac{3}{2}n}
\end{equation}
and comparing with \eqref{eqn:RR2-sum} and applying \eqref{eqn:qpoch_prod5} implies the second Rogers--Ramanujan identity.

In Lean:

\begin{code}{RogersRamanujan/NumberTheory/QTheory/RogersRamanujan.lean}
\kw{theorem} second_rogers_ramanujan_hasSum_strong \imp{R} [CommRing \var{R}] [UniformSpace \var{R}]
  [IsUniformAddGroup \var{R}] [CompleteSpace \var{R}] [T2Space \var{R}] [StrongNonarchimedeanRing \var{R}]
  \imp{q : R} (\var{hq} : IsTopologicallyNilpotent \var{q}) :
  HasSum (\kw{fun} \var{j} ↦ \var{q} ^ (\var{j} * (\var{j} + \num{1})) * bInv (\var{q}; \var{q})_\var{j})
    (bInv (\var{q} ^ \num{2}; \var{q} ^ \num{5})_∞ * bInv (\var{q} ^ \num{3}; \var{q} ^ \num{5})_∞) :=
\end{code}


\subsection{Relaxing the strongly non-archimedean condition}
\label{sec:relax}
Since these identities only involve $q$ which is topologically nilpotent, they can be seen as identities in $\Z\ps{q}$ (or even $\N\ps{q}$), and by Strategy \ref{thm:power-series-eval} they can be generalized to non-archimedean rings, which is expressed by the following in Lean syntax:

\begin{code}{RogersRamanujan/NumberTheory/QTheory/RogersRamanujan.lean}
\kw{theorem} first_rogers_ramanujan \imp{R} [CommRing \var{R}] [UniformSpace \var{R}]
  [IsUniformAddGroup \var{R}] [CompleteSpace \var{R}] [T2Space \var{R}] [NonarchimedeanRing \var{R}]
  \imp{q : R} (\var{hq} : IsTopologicallyNilpotent \var{q}) :
  ∑' \var{j}, \var{q} ^ (\var{j} ^ \num{2}) * bInv (\var{q}; \var{q})_(\var{j}) = bInv (\var{q}; \var{q}^\num{5})_∞ * bInv (\var{q}^\num{4}; \var{q}^\num{5})_∞ :=

\kw{theorem} second_rogers_ramanujan \imp{R} [CommRing \var{R}] [UniformSpace \var{R}]
  [IsUniformAddGroup \var{R}] [CompleteSpace \var{R}] [T2Space \var{R}] [NonarchimedeanRing \var{R}]
  \imp{q : R} (\var{hq} : IsTopologicallyNilpotent \var{q}) :
  ∑' \var{j}, \var{q} ^ (\var{j} * (\var{j} + \num{1})) * bInv (\var{q}; \var{q})_\var{j} = bInv (\var{q} ^ \num{2}; \var{q} ^ \num{5})_∞ * bInv (\var{q} ^ \num{3}; \var{q} ^ \num{5})_∞ :=
\end{code}

\section{Bonus: Two Identities}

As a bonus, we derive the well-celebrated Euler's Pentagonal Number Theorem (Theorem \ref{thm:PNT}) and Jacobi's identity (Theorem \ref{thm:jacobi-triangular}) as corollaries of the Jacobi triple product identity (Theorem \ref{thm:jtp-general}).

\subsection{Proof and Lean verification}
Here we prove and Lean verify two iconic identities.

\begin{theorem}[Euler's Pentagonal Number Theorem]
\label{thm:PNT}
\begin{equation}(q; q)_\infty = \sum_{k \in \Z} (-1)^k q^{\frac{3k^2-k}{2}} = 1 - q - q^2 + q^5 + q^7 + \cdots\end{equation}
\end{theorem}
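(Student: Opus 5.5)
We specialize the generalized Jacobi triple product, Theorem \ref{thm:jtp-general}, replacing its nome by $q^3$. Take $b = q$ and $c = q^2$, so that $bc = q^3$; the nome $q^3$ is topologically nilpotent because $q$ is. Written with nome $q^3$, the theorem gives
\begin{equation*}
(q^3;q^3)_\infty\,(q;q^3)_\infty\,(q^2;q^3)_\infty = \sum_{n\in\Z} e_{-q,-q^2}(n)\, q^{3\binom{|n|}{2}}.
\end{equation*}
We work in a complete strongly non-archimedean ring and relax to non-archimedean rings at the end, exactly as in Section \ref{sec:relax}. That relaxation uses Theorem \ref{thm:power-series-eval}, since the identity lives in $\Z\ps{q}$.

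The left side collapses to $(q;q)_\infty$ by the splitting lemma, Lemma \ref{lem:split}, with $n=3$ and $a=q$. That lemma gives $(q;q)_\infty = (q;q^3)_\infty(q^2;q^3)_\infty(q^3;q^3)_\infty$ after reordering the commutative product.

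For the right side we compute exponents. For $n \ge 0$ the term is $(-q)^n q^{3n(n-1)/2} = (-1)^n q^{(3n^2-n)/2}$. For $n = -m$ with $m>0$ the term is $(-q^2)^m q^{3m(m-1)/2} = (-1)^m q^{(3m^2+m)/2}$, and this equals $(-1)^k q^{(3k^2-k)/2}$ at $k=-m$. So the bilateral sum is termwise, after the reindexing $n\mapsto n$, exactly $\sum_{k\in\Z}(-1)^k q^{(3k^2-k)/2}$. The listed expansion $1-q-q^2+q^5+q^7-\cdots$ then comes from evaluating the first few $k$.

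The main obstacle is formal bookkeeping rather than mathematics. First, Theorem \ref{thm:jtp-general} must be instantiated with its ``$q$'' set to $q^3$. Second, the \texttt{abPow} case split on \texttt{Int.negSucc} must be matched against a uniform formula in $k$; this needs a lemma that $\binom{|n|}{2}$ and $n(n-1)/2$ agree on naturals, plus careful handling of the natural-number division in $(3k^2-k)/2$. I would avoid that division by stating the exponent as $k(3k-1)/2$, computed via $\binom{|n|}{2}$ directly on each branch.
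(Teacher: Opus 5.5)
Your proposal is correct and follows the paper's proof essentially step for step. You split $(q;q)_\infty$ into $(q;q^3)_\infty(q^2;q^3)_\infty(q^3;q^3)_\infty$ via Lemma \ref{lem:split}, apply Theorem \ref{thm:jtp-general} with $(b,c,q) := (q,q^2,q^3)$, match the $n\ge 0$ and $n<0$ branches termwise, and then relax to non-archimedean rings via Theorem \ref{thm:power-series-eval}; your exponent computations are right, and they even supply the power of $q$ that is dropped in the displayed term comparisons of the paper's proof.
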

\begin{proof}
We first apply Lemma \ref{lem:split} to split $(q; q)_\infty$ into $(q; q^3)_\infty (q^2; q^3)_\infty (q^3; q^3)_\infty$.

We then apply the Jacobi triple product identity (Theorem \ref{thm:jtp-general}) to $(b, c, q) := (q, q^2, q^3)$ to obtain:
\begin{equation}(q^3;q^3)_\infty (q;q^3)_\infty (q^2;q^3)_\infty = \sum_{n \in \Z} e_{-q,-q^2}(n) (q^3)^{\binom{|n|}{2}}\end{equation}
where recall that:
\begin{equation}e_{-q,-q^2}(n) :=
\begin{cases}
(-q)^n & n \ge 0 \\
(-q^2)^{-n} & n \le 0
\end{cases}\end{equation}

Then the theorem follows by comparing each term, i.e. for $n \ge 0$ we have:
\begin{equation}(-q)^n (q^3)^{\binom{n}{2}} = (-1)^n \frac{3n^2-n}{2}\end{equation}
and (indexing the negative part by $-n$):
\begin{equation}(-q^2)^n (q^3)^{\binom{n}{2}} = (-1)^n \frac{3n^2+n}{2}\end{equation}
\end{proof}

We once again use Theorem \ref{thm:power-series-eval} to generalize this to non-archimedean rings, which corresponds to the following in Lean syntax:

\begin{code}{RogersRamanujan/NumberTheory/QTheory/Pentagonal.lean}
\kw{theorem} tsum_qPochhammerInf_self \imp{R} [CommRing \var{R}] [UniformSpace \var{R}]
  [IsUniformAddGroup \var{R}] [NonarchimedeanRing \var{R}] [CompleteSpace \var{R}] [T2Space \var{R}]
  \imp{q : R} (\var{hq} : IsTopologicallyNilpotent \var{q} := \kw{by} \kw{simp}) :
  ∑' \var{k} : ℤ, \var{k}.\prop{negOnePow} • \var{q} ^ pentagonal \var{k} = (\var{q}; \var{q})_∞ :=
\end{code}

\begin{theorem}[Jacobi's identity]
\label{thm:jacobi-triangular}
\begin{equation}
\label{eqn:jacobi-triangular}
(q;q)_\infty^3 = \sum_{k \in \N} (-1)^k (2k+1) q^{\frac{k(k+1)}{2}}
\end{equation}
\end{theorem}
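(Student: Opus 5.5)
The plan is to derive Jacobi's identity from the Jacobi triple product (Theorem \ref{thm:jtp-general}) by a ``differentiation at $z=1$'' argument, made algebraic so that it works in the general ring setting. Start from the form of Lemma \ref{lem:jtp-a-q} with the sign placed so that the $(1-z)$ factor is visible:
\begin{equation*}
(q)_\infty (z)_\infty (z^{-1}q)_\infty = \sum_{n\in\Z} (-1)^n z^n q^{\binom{n}{2}}.
\end{equation*}
Both sides vanish at $z=1$. The left side is $(1-z)(q)_\infty (zq)_\infty (z^{-1}q)_\infty$. We need to divide the right side by $(1-z)$ as an honest algebraic operation.

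\textbf{Step 1: pair the terms on the right.} Pair the term $n$ with the term $1-n$. Since $\binom{1-n}{2}=\binom{n}{2}$, the pairs for $n\ge 1$ combine as
\begin{equation*}
(-1)^n q^{\binom{n}{2}}\bigl(z^n - z^{1-n}\bigr) = (-1)^n q^{\binom{n}{2}} z^{1-n}\bigl(z^{2n-1}-1\bigr).
\end{equation*}
This equals $-(1-z)$ times $(-1)^n q^{\binom{n}{2}} z^{1-n}\sum_{i=0}^{2n-2} z^i$.

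\textbf{Step 2: cancel $(1-z)$ and specialize.} Cancel $(1-z)$ from both sides. To make this legitimate, work in a ring where $1-z$ is a non-zero-divisor. The natural choice is the multivariate Laurent series ring $\Z[z^{\pm}]\ps{q}$ (embedded in the paper's \texttt{MvLaurentSeries}). There $1-z$ is regular because the ring is a domain, and we may use the JTP in the form of Theorem \ref{thm:jtp-general} with $(b,c)=(z,z^{-1}q)$. Cancelling gives
\begin{equation*}
(q)_\infty (zq)_\infty (z^{-1}q)_\infty = \sum_{n\ge1} (-1)^{n-1} q^{\binom{n}{2}} z^{1-n}[2n-1]_z .
\end{equation*}
Now substitute $z=1$. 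This is a continuous ring homomorphism $\Z[z^{\pm}]\ps{q}\to\Z\ps{q}$, and it maps infinite $q$-Pochhammer symbols correctly by Lemma \ref{lem:poch-inf-func}. The result is
\begin{equation*}
(q)_\infty^3=\sum_{n\ge1}(-1)^{n-1}(2n-1)q^{\binom n2}.
\end{equation*}
Reindexing with $k=n-1$ gives $\binom{n}{2}=\frac{k(k+1)}{2}$, which is exactly \eqref{eqn:jacobi-triangular}.

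\textbf{Step 3: transport to a general ring.} The identity now holds in $\Z\ps{q}$ with $q$ a power series variable. Transfer it to an arbitrary complete non-archimedean $R$ with $q$ topologically nilpotent via the evaluation map of Theorem \ref{thm:power-series-eval}, exactly as was done for Theorem \ref{thm:PNT}. This map is continuous, so it commutes with \texttt{tsum} and with $(q;q)_\infty$.

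\textbf{Main obstacle.} The hard part will be Step 2 in formal terms. The regrouping of the bilateral \texttt{tsum} over $\Z$ into the paired sum over $n\ge1$ requires summability of the paired family. Pulling out the factor $(1-z)$ from an infinite sum also requires summability of the quotient family. Cancelling $1-z$ additionally needs the fact that $1-z$ is a non-zero-divisor in the Laurent series ring. If the last point proves awkward, I would try a fallback. Apply the specialization $z\mapsto1$ to a \emph{formal derivative} in $z$ on $\Z[z^{\pm}]\ps{q}$, coefficientwise, checking that it is a continuous derivation. Use the product rule together with $(1-z)|_{z=1}=0$. This avoids division entirely at the cost of building the derivation.
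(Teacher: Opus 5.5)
Your proposal is correct and follows the paper's proof essentially step for step: adjoin a formal invertible variable so that $1-z$ is a non-zero-divisor (the paper works in $R[t,t^{-1}]\ps{q}$), apply Theorem \ref{thm:jtp-general} with $(b,c)=(z,z^{-1}q)$, and pair the bilateral terms ($0$ with $1$, $-1$ with $2$, \dots) to factor out $1-z$. Then cancel, set $z=1$, and transport to general non-archimedean rings via Theorem \ref{thm:power-series-eval}. One small correction: drop the parenthetical claim that $\Z[z^{\pm}]\ps{q}$ embeds in \texttt{MvLaurentSeries}. It does not embed in that total-degree ring (e.g.\ $\sum_k z^{-2k}q^k$ has total degrees unbounded below, and $z\mapsto 1$ would not be defined there), but you do not need it: $\Z[z^{\pm}]\ps{q}$ with the $q$-adic topology is already complete and linearly topologized, hence strongly non-archimedean, so the triple product applies there directly.
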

\begin{proof}
We would apply the Jacobi triple product identity (Theorem \ref{thm:jtp-general}) to $(1, q, q)$, but the first factor of $(1;q)_\infty$ is $1-1 = 0$, so we would need to cancel that factor formally.

This means we first adjoin a formal invertible element $t$ so that $1 - t$ is regular, i.e. multiplication by $1 - t$ is injective. In the actual Lean code, we consider the ring $R[t,t^{-1}]\ps{q}$ where $t$ is the formal invertible element.

Then applying the Jacobi triple product identity (Theorem \ref{thm:jtp-general}) on $(t, t^{-1}q, q)$ gives us:
\begin{equation}(t;q)_\infty (t^{-1}q;q)_\infty (q; q)_\infty = \sum_{n \in \Z} e_{-t,-t^{-1}q}(n) q^{\binom{|n|}{2}}\end{equation}

The left hand side contains $1-t$ as the first factor of $(t;q)_\infty$, and the right hand side can be grouped ($0$ with $1$, then $-1$ with $2$, then $-2$ with $3$, etc.) to form:
\begin{equation}(1-t)-t^{-1}q(1-t^3)+t^{-2}q^3(1-t^5)+\cdots\end{equation}
so that $1-t$ can also be factored from the right hand side, giving us:
\begin{equation}(tq;q)_\infty (t^{-1}q;q)_\infty (q;q)_\infty = \sum_{k\in\N} (-1)^k t^{-k} (1+t+\cdots+t^{2n}) q^{\binom{n+1}{2}}\end{equation}
where substituting $t=1$ then gives us the desired Equation \ref{eqn:jacobi-triangular}.
\end{proof}

In Lean:

\begin{code}{RogersRamanujan/NumberTheory/QTheory/JacobiTriangular.lean}
\kw{theorem} tsum_qPochhammerInf_self_pow_three \imp{R} [CommRing \var{R}] [UniformSpace \var{R}]
  [IsUniformAddGroup \var{R}] [NonarchimedeanRing \var{R}] [CompleteSpace \var{R}] [T2Space \var{R}]
  \imp{q : R} (\var{hq} : IsTopologicallyNilpotent \var{q} := \kw{by} \kw{simp}) :
  ∑' \var{k} : ℕ, (-\num{1}) ^ \var{k} * (\num{2} * \var{k} + \num{1}) * \var{q} ^ (\var{k} + \num{1}).\prop{choose} \num{2} = (\var{q}; \var{q})_∞ ^ \num{3} :=
\end{code}

\section{Future formalization directions}

We believe that our formalization will be useful for future formalization projects.
For example, our formalization of the Jacobi triple product identity can be used to prove important results on partition numbers, such as the Ramanujan congruence $p(5n+4) \equiv 0 \pmod{5}$.
Also, we already have Dedekind eta function\footnote{\url{https://leanprover-community.github.io/mathlib4_docs/Mathlib/NumberTheory/ModularForms/DedekindEta.html}} and Jacobi theta function\footnote{\url{https://leanprover-community.github.io/mathlib4_docs/Mathlib/NumberTheory/ModularForms/JacobiTheta/OneVariable.html}} in \texttt{mathlib}, and combined with Jacobi Triple Product, we will be able to prove identities between them and also prove sum of two and four squares identities.

Further applications come from Bailey's Lemma. Although we only give one (major) application of it --- the Rogers--Ramanujan identities --- Bailey's lemma can be used to prove a vast number of identities on $q$-series that appear in several contexts (for example, see \cite{Andrews1974, Andrews1986, Bailey1949, Berndt, BFOR, GOW, SillsBook}). Bailey’s lemma is one of the central mechanisms in the theory of $q$-series, providing a systematic way to generate infinite families of identities from a single Bailey pair. Its most classical application is the proof and discovery of Rogers–Ramanujan-type identities as mentioned here. However, the applications also include many generalizations such as the Andrews–Gordon identities. Beyond product-sum identities, Bailey’s lemma has also been used to derive partition identities, transformations of basic hypergeometric series, and identities connected with mock theta functions and modular forms. In this sense, it serves not merely as a proof technique, but as a powerful engine for producing new structure throughout the theory of $q$-series.

At last, since we formalized $q$-theory in (strongly) non-archimedean rings, this will allow us to formalize theory of $p$-adic or mod $p$ modular forms, or modular forms over general rings, in the sense of Katz \cite{katz} or Serre \cite{serre}.
These will be crucial if one wants to formalize general results on congruences of coefficients of modular forms.

\section{Appendix: The AxiomProver system}\label{sec:Appendix}

This appendix describes the AxiomProver system as it was used in the project.  The goal is to make clear which parts of the development are mathematical design choices, which parts are conventional Lean engineering, and which parts were assisted by AxiomProver.  We record the system's role in proposing intermediate lemmas, organizing proof dependencies, and accelerating routine formal steps, while emphasizing that the theorems reported in the paper are certified by the Lean kernel and by the accompanying formal artifact.  The appendix is intended to help readers reproduce the development and to clarify how such systems can support, rather than replace, mathematical judgment.

\subsection{Overview of AxiomProver}
\label{sec:axiomprover}

AxiomProver is an AI system currently under development at Axiom Math. We submitted various problems to AxiomProver throughout the process of making the Rogers--Ramanujan repository, which then autonomously produced formalizations. These problems and formalizations are described in Section \ref{sec:artifacts}.

\subsection{Artifacts}
\label{sec:artifacts}

In each subsection we describe the input and output of AxiomProver. These are hosted in \url{https://github.com/AxiomMath/RogersRamanujan-artifacts}.

Note that these artifacts were produced \textit{during} the process of developing the library, which means that they might no longer compile in the current environment. In that case, we provide updated versions of those files alongside the original ones. These have ``updated'' in their filenames and are not separately listed below.

\subsubsection{Jacobi Triple Product}

We asked AxiomProver to prove the Jacobi triple product identity (Theorem \ref{thm:JTP}). As AxiomProver was under development, we provided fragments of drafts of the RogersRamanujan library.

\begin{itemize}
\item Input:
\begin{itemize}
\item Draft versions of \texttt{DiscreteEval.lean}, \texttt{QTheory.lean}, and \texttt{TruncPoly.lean} (\textit{not provided}).
\item \texttt{chapter9.tex}: A tex reproduction of \cite[Chapter 9]{Chan2009}.
\item \texttt{problem.lean}: A (human) formalized statement of the problem without proof.
\end{itemize}
\item Environment: We used Lean 4.26.0 with mathlib 4.26.0.
\item Output: It then produced \texttt{solution.lean}, a formal proof matching \texttt{problem.lean}.
\end{itemize}

\subsubsection{Limit of topologically nilpotent elements}

We asked AxiomProver to prove that, in a strongly non-archimedean ring, the limit of a convergent sequence of topologically nilpotent elements is again topologically nilpotent. In the actual library we generalized it to the statement that the set of topologically nilpotent elements is closed, but it did not fit the scope of this paper.

\begin{itemize}
\item Input:
\begin{itemize}
\item \texttt{problem.lean}: A formal statement of the question without proof.
\end{itemize}
\item Environment: We used Lean 4.28.0 with mathlib 4.28.0 and the RogersRamanujan library.
\item Output: It then produced \texttt{solution.lean}, a formal proof matching \texttt{problem.lean}.
\end{itemize}

\subsubsection{Finding an open subring}

While we were developing the theory of strongly non-archimedean rings, we asked ourselves if they are just all given by Huber pairs. We then proposed that if so, the open subring would be given by the ``nilpotent-stable'' elements, i.e.
\begin{equation}
R_0 := \{t \in R \mid \forall x \in R, x~\mathrm{topologically~nilpotent} \implies tx~\mathrm{topologically~nilpotent}\}
\end{equation}

We then asked AxiomProver whether $R_0$ is open in general, where we only asked about non-archimedean rings. AxiomProver provided the counter-example $(\Q_2)^\N$, which happened to be strongly non-archimedean anyway. See \ref{sec:strong-not-open} for more discussion.

\begin{itemize}
\item Input:
\begin{itemize}
\item \texttt{problem.lean}: A formal statement of the question without proof, presented in the negative.
\end{itemize}
\item Environment: We used Lean 4.28.0 with mathlib 4.28.0.
\item Output: It then produced \texttt{solution.lean}, a formal proof of the negative, matching \texttt{problem.lean}.
\end{itemize}

\subsubsection{Euler's Pentagonal Number Theorem}

We asked AxiomProver to prove Euler's Pentagonal Number Theorem (Theorem \ref{thm:PNT}).

\begin{itemize}
\item Input:
\begin{itemize}
\item \texttt{task.md}: A short description of the task.
\end{itemize}
\item Environment: We used Lean 4.30.0 with mathlib 4.30.0 and the RogersRamanujan library.
\item Output: It then produced \texttt{problem.lean}, a formal statement of the task, and \texttt{solution.lean}, a formal proof matching \texttt{problem.lean}.
\end{itemize}

\subsubsection{Jacobi's Identity}

We asked AxiomProver to prove Jacobi's identity (Theorem \ref{thm:jacobi-triangular}).

\begin{itemize}
\item Input:
\begin{itemize}
\item \texttt{task.md}: A short description of the task.
\end{itemize}
\item Environment: We used Lean 4.30.0 with mathlib 4.30.0 and the RogersRamanujan library.
\item Output: It then produced \texttt{problem.lean}, a formal statement of the task, and \texttt{solution.lean}, a formal proof matching \texttt{problem.lean}.
\end{itemize}

\subsubsection{Proof of the $q$-Pfaff--Saalsch\"utz identity}

The proof of Lemma \ref{lem:binomial_cauchy_2} and Theorem \ref{thm:qPS-cleared}, the denominator-cleared version of $q$-Pfaff--Saalsch\"utz identity, is autoformalized by AxiomProver.
\begin{itemize}
    \item Input:
    \begin{itemize}
        \item \texttt{problem.lean}: A formal statement of the task
    \end{itemize}
    \item Environment: We used Lean 4.31.0 with mathlib 4.31.0.
    \item Output: It then produced \texttt{solution.lean}, a formal proof matching \texttt{problem.lean}.
\end{itemize}

\section{Appendix: Counterexamples}\label{sec:Counterexamples}

\subsection{Finding an open subring}\label{sec:strong-not-open}

$(\Q_2)^\N$ is a source of counter-examples for many claims regarding strongly non-archimedean rings. See Section \ref{sec:strong} for why it is strongly non-archimedean.

For example, the set of power-bounded elements is $(\Z_2)^\N$, which is not open.

\subsection{A non-archimedean ring that is not strongly non-archimedean}

We produce $R$ with $a, q \in R$ subject to the following constraints: $R$ should be a complete non-archimedean ring, $q$ should be topologically nilpotent, but $(a; q)_\infty$ should not exist.

We start with $\Q_p$ and its usual $p$-adic norm $\|\cdot\|_p$, which allows us to define a family of extended norms (i.e. norms that can take the value $\infty$) on $(\Q_p)^\N$, indexed by $N \in \N^+$:
\begin{equation}\|x\|_N := \sup_{n} \left( p^{-n^3/N} \|x_n\|_p \right)\end{equation}

Now let $A_p := \{x \mid \forall N, \|x\|_N < \infty\}$, so that each $\|\cdot\|_N$ becomes an actual norm. We then let these norms generate a topology on $A_p$, so that for any given sequence $(x_n)_n \in A_p$, we have $x_n \to 0$ as $n \to \infty$ iff for all $N \in \N^+$ we have $\lim_{n \to \infty} \|x_n\|_N = 0$. For $A_p$ we have the following in Lean syntax:

\begin{code}{RogersRamanujan/Counterexamples/NotStrong.lean}
\kw{def} NotStrong.FrechetSpace (\var{p} : ℕ) [Fact \var{p}.\prop{Prime}] : Type :=
  \imp{ \var{x} : ℕ → ℚ_[\var{p}] // ∀ \var{N} : ℕ+, ∃ \var{M} : ℝ, ∀ \var{n}, frechet \var{N} \var{x} \var{n} ≤ \var{M} }
\end{code}

\begin{lemma}
    $A_p$ is a non-archimedean ring under that topology. In particular, multiplication is continuous.
\end{lemma}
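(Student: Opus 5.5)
We need to show that $A_p$, with componentwise addition and multiplication and the topology generated by the family of norms $\|\cdot\|_N$, is a topological ring in which open additive subgroups form a neighbourhood basis of $0$. The plan is to verify, in order: $A_p$ is a subring of $(\Q_p)^\N$; each $\|\cdot\|_N$ is an ultrametric seminorm; the basic neighbourhoods of $0$ are additive subgroups; and addition and multiplication are continuous.

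\textbf{Closure under the ring operations.} For addition and negation this is immediate from the ultrametric inequality $\|x+y\|_N \le \max(\|x\|_N,\|y\|_N)$, which holds termwise because $\|\cdot\|_p$ is ultrametric. For $1=(1,1,\dots)$ we have $\|1\|_N=\sup_n p^{-n^3/N}=1<\infty$.

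For products, the key observation is the termwise estimate
\[
p^{-n^3/N}\|x_ny_n\|_p = \bigl(p^{-n^3/(2N)}\|x_n\|_p\bigr)\bigl(p^{-n^3/(2N)}\|y_n\|_p\bigr).
\]
This gives $\|xy\|_N \le \|x\|_{2N}\,\|y\|_{2N}$. The bound is finite because $x,y\in A_p$ means every norm in the family is finite, and $2N\in\N^+$. So $A_p$ is closed under multiplication.

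\textbf{Non-archimedean property.} A neighbourhood basis of $0$ is given by the finite intersections
\[
U=\{x : \|x\|_{N_i}<\varepsilon_i,\ i=1,\dots,k\}.
\]
By the ultrametric inequality and $\|-x\|_N=\|x\|_N$, each such set is an additive subgroup. Each is also open, since for every $x\in U$ we have $x+U=U$. Translation invariance of the topology means $A_p$ is a topological additive group, so addition and negation are continuous.

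\textbf{Continuity of multiplication.} This is the main point, and the step where the choice of $n^3$ is tested. The exponent $n^3$ itself plays no role here; what matters is only that the family of indices is closed under $N\mapsto 2N$.

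Given $x_0,y_0$ and a target set $\|\cdot\|_N<\varepsilon$, write
\[
xy-x_0y_0=(x-x_0)(y-y_0)+x_0(y-y_0)+(x-x_0)y_0.
\]
Applying the submultiplicative estimate to each of the three terms bounds $\|xy-x_0y_0\|_N$ by the maximum of
\[
\|x-x_0\|_{2N}\|y-y_0\|_{2N},\qquad \|x_0\|_{2N}\|y-y_0\|_{2N},\qquad \|x-x_0\|_{2N}\|y_0\|_{2N}.
\]
Since $\|x_0\|_{2N}$ and $\|y_0\|_{2N}$ are finite constants, choosing $\delta$ small and restricting to $\|x-x_0\|_{2N},\|y-y_0\|_{2N}<\delta$ makes all three terms less than $\varepsilon$.

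For a basic neighbourhood defined by several norms, intersect the resulting neighbourhoods for each index. This proves joint continuity of multiplication, so $A_p$ is a non-archimedean ring.
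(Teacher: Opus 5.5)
Your proposal is correct and follows the same approach as the paper. The paper's proof only remarks that continuity of multiplication is the one nontrivial point, and that it follows from the identity $p^{-n^3/N}\|x_ny_n\|_p = \bigl(p^{-n^3/(2N)}\|x_n\|_p\bigr)\bigl(p^{-n^3/(2N)}\|y_n\|_p\bigr)$, which is exactly your estimate $\|xy\|_N \le \|x\|_{2N}\|y\|_{2N}$. You additionally spell out the details the paper leaves implicit: closure of $A_p$ under products, the basis of open additive subgroups at $0$, and the three-term $\varepsilon$--$\delta$ argument for joint continuity.
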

\begin{proof}
    The only non-trivial part to check is the continuity of multiplication, which follows from:
    \begin{equation}
        p^{-n^3/N} \|x_n y_n\| = \left( p^{-n^3/(2N)} \|x_n\| \right) \left( p^{-n^3/(2N)} \|y_n\| \right)
    \end{equation}
\end{proof}

In Lean:

\begin{code}{RogersRamanujan/Counterexamples/NotStrong.lean}
\kw{instance} \imp{p : ℕ} [Fact \var{p}.\prop{Prime}] : NonarchimedeanRing (FrechetSpace \var{p})
\end{code}

Now let $a_p := (p^{-n^2})_n$ and $q_p := (p)_n$. We know that $a_p \in A_p$ because $\lim_{n \to \infty} p^{n^2 - \frac{n^3}{N}} = 0$ for any fixed $N \in \N^+$. Similarly we have $q_p \in A_p$. We also know that $q_p$ is topologically nilpotent by an explicit computation.

In Lean:

\begin{code}{RogersRamanujan/Counterexamples/NotStrong.lean}
\kw{theorem} NotStrong.FrechetSpace.isTopologicallyNilpotent_p (\var{p} : ℕ) [Fact \var{p}.\prop{Prime}] :
  IsTopologicallyNilpotent (\var{p} : FrechetSpace \var{p}) :=
\end{code}

However, $\|(a_p)^m (q_p)^{\binom{m}{2}}\|_N = \sup_n \left( p^{-\frac{n^3}{N} + mn^2 - \binom{m}{2}} \right)$ is a simple optimization problem, optimized by $n = \frac{2mN}{3}$ (with marginal errors due to using integers), and putting $N := 1$ gives us:
\begin{equation}
    \|(a_p)^m (q_p)^{\binom{m}{2}}\|_1 \approx \frac{4}{27} m^3 - \binom{m}{2}
\end{equation}
which does not go to $0$ as $m \to \infty$, so $(a_p)^m (q_p)^{\binom{m}{2}}$ does not go to $0$ as $m \to \infty$. In Lean:

\begin{code}{RogersRamanujan/Counterexamples/NotStrong.lean}
\kw{theorem} NotStrong.FrechetSpace.not_tendsto_a_pow_mul_p_pow_choose_two_zero \imp{p : ℕ}
  [Fact \var{p}.\prop{Prime}] : ¬Tendsto (\kw{fun} \var{n} ↦ (a \var{p} ^ \var{n} * \var{p} ^ \var{n}.\prop{choose} \num{2})) atTop (\nhds \num{0}) :=
\end{code}

From this we can already deduce:
\begin{theorem}\label{thm:not-strong}
    $A_p$ is not strongly non-archimedean.
\end{theorem}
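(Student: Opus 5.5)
We argue by contradiction, using Theorem~\ref{thm:strong-prop}. Suppose $A_p$ were strongly non-archimedean. The second part of Theorem~\ref{thm:strong-prop} says that in a strongly non-archimedean ring, if $q$ is topologically nilpotent, then $\lim_{m\to\infty} a^m q^{\binom{m}{2}} = 0$ for every $a$. That part needs no completeness hypothesis, only the first part applied to the sequence $a q^{i}\to 0$. We apply it with $a := a_p$ and $q := q_p$; both lie in $A_p$ by the computations above. Since $q_p$ is topologically nilpotent (\texttt{isTopologicallyNilpotent\_p}), we would get $a_p^m q_p^{\binom{m}{2}}\to 0$. This contradicts \texttt{not\_tendsto\_a\_pow\_mul\_p\_pow\_choose\_two\_zero}.

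It remains to check the implication in Theorem~\ref{thm:strong-prop}, namely that $b_i := a q^i \to 0$ forces $\prod_{i<m} b_i \to 0$. Let $U$ be a neighbourhood of $0$. By the strong non-archimedean axiom, choose an open $V\subseteq U$ with $0\in V$ and $V\cdot V\subseteq V$. There is an $M$ with $b_i\in V$ for all $i\ge M$. The finite prefix $c:=\prod_{i<M} b_i$ is a fixed element. By continuity of multiplication, $x\mapsto cx$ is continuous at $0$, so there is an open $W\ni 0$ with $cW\subseteq V$; we may shrink $W$ so that $W\subseteq V$ and $W\cdot W\subseteq W$. Choose $M'\ge M$ with $b_i\in W$ for all $i\ge M'$. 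For $m>M'$, the tail product $\prod_{M'\le i<m} b_i$ lies in $W$, since $W$ is closed under multiplication. Also $\prod_{M\le i<M'} b_i =: d$ is fixed. So we should pick $W$ to absorb the whole fixed factor $cd$, with $W$ chosen after $M'$.

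This circularity (choosing $W$ after $M'$) is the main obstacle. It is resolved by choosing $W$ differently: take $W\subseteq V$ open, closed under multiplication, and satisfying $c\,W\subseteq V$. Then pick $M'$ with $b_i\in W$ for $i\ge M'$, where $M'\ge M$. The terms $b_i$ with $M\le i<M'$ lie in $V$, and the terms with $i\ge M'$ lie in $W\subseteq V$. So for all $m>M$, the product $\prod_{M\le i<m} b_i$ lies in $V$, but not necessarily in $W$.

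To repair this, regroup the product as $c\cdot b_M\cdots$. Using $V\cdot V\subseteq V$ alone, we need $c\cdot v\in U$ for all $v\in V$. We arrange this from the outset: first pick an open subrng $V'\subseteq U$. By continuity of $x\mapsto cx$ (which requires knowing $c$, i.e.\ knowing $M$), we want an open subrng $V\subseteq V'$ with $cV\subseteq V'$.

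To break the dependence of $c$ on $M$, let $M$ be determined by $V'$ itself: take $M$ with $b_i\in V'$ for all $i\ge M$, and set $c=\prod_{i<M}b_i$. Then choose an open subrng $V\subseteq V'$ with $cV\subseteq V'$, and $M''\ge M$ with $b_i\in V$ for $i\ge M''$. For $m>M''$, write the product as
\[
c\cdot\Bigl(\prod_{M\le i<M''}b_i\Bigr)\cdot\Bigl(\prod_{M''\le i<m}b_i\Bigr).
\]
The middle factor lies in $V'$ and the last factor lies in $V$. Hence the full product equals $c\cdot(\text{element of }V'\cdot V)$.

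This still needs $V'\cdot V\subseteq V$, i.e.\ an absorbing property. In the paper this step is the already established first claim of Theorem~\ref{thm:strong-prop}, so we invoke it directly rather than reproving it. With that claim in hand, the contradiction with \texttt{not\_tendsto\_a\_pow\_mul\_p\_pow\_choose\_two\_zero} completes the proof.
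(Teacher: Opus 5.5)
Your proof is correct and is the paper's argument: assume $A_p$ is strongly non-archimedean, apply the second claim of Theorem~\ref{thm:strong-prop} (which, as you note, needs no completeness) to $a_p$ and the topologically nilpotent $q_p$, and contradict the established non-convergence of $a_p^m q_p^{\binom{m}{2}}$ to $0$. The detour re-deriving the first claim is unnecessary, and your last construction already closes it without any absorbing property $V'\cdot V\subseteq V$: by commutativity, $c\,d\,t = d\,(ct)\in V'\cdot V'\subseteq V'\subseteq U$, and when the middle product is empty the product is just $ct\in cV\subseteq V'\subseteq U$.
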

\begin{proof}
    Follows from Theorem \ref{thm:strong-prop}.
\end{proof}

In Lean:

\begin{code}{RogersRamanujan/Counterexamples/NotStrong.lean}
\kw{theorem} NotStrong.FrechetSpace.not_strongNonarchimedean (\var{p} : ℕ) [Fact \var{p}.\prop{Prime}] :
  ¬StrongNonarchimedeanRing (FrechetSpace \var{p}) :=
\end{code}

But we can obtain a stronger result:
\begin{theorem}\label{thm:no-poch-inf}
    $(a_p; q_p)_\infty$ does not exist.
\end{theorem}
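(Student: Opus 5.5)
The plan is to argue by contradiction. Suppose the partial products $P_m := (a_p;q_p)_m = \prod_{i=0}^{m-1}(1 - a_p q_p^i)$ converge in $A_p$ to some limit $L$; this is the conditional notion used for $(a;q)_\infty$, and unconditional convergence would imply it. I will show that $L$ must be $0$, and then that $P_m \not\to 0$.

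\emph{Step 1: convergence forces coordinatewise convergence.} For each fixed coordinate $n$, the projection $x \mapsto x_n$ from $A_p$ to $\Q_p$ is continuous, since $\|x_n\|_p \le p^{n^3}\|x\|_1$. Hence $(P_m)_n \to L_n$ in $\Q_p$ for every $n$. Multiplication in $A_p$ is coordinatewise, so
\[
(P_m)_n = \prod_{i=0}^{m-1}\bigl(1 - p^{i-n^2}\bigr).
\]
The factor with $i = n^2$ equals $1 - 1 = 0$. So for $m > n^2$ we have $(P_m)_n = 0$, which forces $L_n = 0$. Since $n$ was arbitrary, $L = 0$, and therefore $P_m \to 0$ in $A_p$. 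In particular $\|P_m\|_1 \to 0$. (Here I use that $A_p$ is Hausdorff, which holds because the norms separate points.)

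\emph{Step 2: $\|P_m\|_1$ does not tend to $0$.} Fix $m$, and consider any coordinate $n$ with $n^2 \ge m$. Then every index satisfies $i < m \le n^2$, so each factor has $\|1 - p^{i-n^2}\|_p = p^{n^2 - i}$ by the ultrametric property. Multiplying these gives
\[
\|(P_m)_n\|_p = p^{\sum_{i<m}(n^2 - i)} = p^{mn^2 - \binom{m}{2}}.
\]
Consequently
\[
\|P_m\|_1 \ge \sup_{n^2 \ge m} p^{-n^3 + mn^2 - \binom{m}{2}}.
\]
This is essentially the same optimisation as in the preceding computation for $a_p^m q_p^{\binom m2}$. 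Choose $n = \lfloor 2m/3 \rfloor$; the constraint $n^2 \ge m$ holds for $m \ge 3$. The exponent is then approximately $\tfrac{4}{27}m^3 - \tfrac12 m^2$, which tends to $+\infty$. So $\|P_m\|_1 \to \infty$, which contradicts $\|P_m\|_1 \to 0$.

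\emph{Main obstacle.} The main difficulty is the integer bookkeeping in Step 2. One has to give a clean lower bound for $-n^3 + mn^2 - \binom m2$ at $n = \lfloor 2m/3\rfloor$ that is positive (indeed unbounded) for large $m$. A crude bound suffices; for example, taking $n = \lfloor m/2 \rfloor$ gives roughly $m^3/8 - m^2/2$, which is easier to verify. The remaining steps, namely continuity of the projections and the valuation of each factor, are routine.
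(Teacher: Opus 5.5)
Your argument is correct, but your reduction step differs from the paper's. The paper never identifies the would-be limit. It uses only that convergence forces the consecutive differences $(a_p;q_p)_m-(a_p;q_p)_{m+1}=(a_p;q_p)_m\,a_pq_p^m$ to tend to $0$. It then bounds $\|\cdot\|_1$ of this difference from below at the single coordinate $n=\sqrt m$, so it works only along perfect squares $m$, and obtains $p^{m(m+1)/2-m^{3/2}}$.

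You instead use continuity of the coordinate projections, together with the vanishing factor $1-p^{i-n^2}$ at $i=n^2$, to force $L=0$. You then estimate $\|P_m\|_1$ itself at $n\approx 2m/3$ (or $m/2$). This works for every large $m$, so no subsequence is needed, and gives the larger exponent $\approx\tfrac{4}{27}m^3$. The key valuation computation is the same in both: $\|1-p^{i-n^2}\|_p=p^{n^2-i}$ for $i<m\le n^2$.

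Your Step 1 is actually dispensable. Step 2 already shows $\|P_m\|_1\to\infty$. Any sequence converging in $A_p$ to some $L$ satisfies $\|P_m\|_1\le\max(\|L\|_1,\|P_m-L\|_1)$ and is therefore bounded. So you get the contradiction without pinning down $L$, and without any Hausdorff considerations.

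In comparison, the paper's route does not rely on the special feature that each coordinate of the partial products is eventually $0$. Your route exploits that feature to explain what the limit would have to be, and avoids the perfect-square subsequence.
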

\begin{proof}
    If it exists, it would be $\lim_{m \to \infty} (a_p; q_p)_m$, and since $A_p$ is non-archimedean, the limit exists if and only if $\lim_{m \to \infty} \left( (a_p; q_p)_m - (a_p; q_p)_{m+1} \right) = 0$, but:
\[
\begin{array}{rcl}
    \| (a_p; q_p)_m - (a_p; q_p)_{m+1} \|_1
    &=& \sup_n \Big( p^{-n^3} \Big( \prod_{i=0}^{m-1} \| 1 - p^{-n^2+i} \|_p \Big) \| p^{-n^2+m} \|_p \Big) \\
    &\ge& p^{-(\sqrt{m})^3 + \sum_{i=0}^{m} ((\sqrt{m})^2-i)} \\
    &=& p^{\frac{m(m+1)}{2} - m^{3/2}}
\end{array}
\]
    which goes to $\infty$ as $m \to \infty$. (For the step involving $\sqrt{m}$, it can be made rigorous by only considering the subsequence where $m$ is a perfect square.)
\end{proof}

In Lean:

\begin{code}{RogersRamanujan/Counterexamples/NotStrong.lean}
\kw{theorem} NotStrong.FrechetSpace.not_tendsto_qPochhammer \imp{p : ℕ} [Fact \var{p}.\prop{Prime}]
  (\var{L} : FrechetSpace \var{p}) : ¬Tendsto ((a \var{p}; \var{p})_·) atTop (\nhds \var{L}) :=
\end{code}


\begin{thebibliography}{99}



\bibitem{Andrews1974}
G.~E. Andrews, \emph{An analytic generalization of the Rogers--Ramanujan identities for odd moduli}, Proc. Natl. Acad. Sci. USA \textbf{71} (1974), no.~10, 4082--4085.

\bibitem{Andrews1975}
G.~E. Andrews, \emph{Problems and prospects for basic hypergeometric functions}, in R.~Askey (ed.), \emph{Theory and Application of Special Functions}, Academic Press, New York, 1975, 191--224.

\bibitem{Andrews1986}
G.~E. Andrews, \emph{q-Series: Their development and application in analysis, number theory, combinatorics, physics, and computer algebra}, American Mathematical Soc., 1986

\bibitem{AndrewsBook}
G.~E. Andrews, \emph{The Theory of Partitions}, Encyclopedia of Mathematics and its Applications, vol.~2, Addison--Wesley, Reading, MA, 1976; reprinted by Cambridge Univ.\ Press, 1998.

\bibitem{AndrewsPartition}
G.~E. Andrews and K. Eriksson. \emph{Integer Partitions}. Cambridge University Press, 2004.

\bibitem{Bailey1949}
W.~N. Bailey, \emph{Identities of the Rogers--Ramanujan type}, Proc. London Math. Soc. (2) \textbf{50} (1949), 1--10.

\bibitem{BaxterBook}
R.~J. Baxter, \emph{Exactly Solved Models in Statistical Mechanics}, Academic Press, London, 1982.

\bibitem{Berndt}
B. C. Berndt. \emph{Ramanujan's Notebooks}. Springer New York, 1991.

\bibitem{BFOR}
K.~Bringmann, A.~Folsom, K.~Ono, and L.~Rolen,
\emph{Harmonic Maass Forms and Mock Modular Forms: Theory and Applications},
American Mathematical Society Colloquium Publications, Vol.~64,
American Mathematical Society, Providence, RI, 2017.

\bibitem{Buzzard2020}
K.~Buzzard, \emph{Division by zero in type theory: a FAQ}, Xena Project blog, 2020. \url{https://xenaproject.wordpress.com/2020/07/05/division-by-zero-in-type-theory-a-faq/}.

\bibitem{Chan2009}
Chan, H. H. (2009). Analytic Number Theory for Undergraduates. In Monographs in number theory. https://doi.org/10.1142/7252


\bibitem{CoddingtonLevinson}
E.~A. Coddington and N.~Levinson, \emph{Theory of Ordinary Differential Equations}, McGraw--Hill, New York, 1955.

\bibitem{Lean}
L.~de~Moura, S.~Kong, J.~Avigad, F.~van~Doorn, and J.~von~Raumer, \emph{The Lean theorem prover (system description)}, in \emph{Automated Deduction -- CADE-25}, Lecture Notes in Computer Science, vol.~9195, Springer, Cham, 2015, 378--388.

\bibitem{DiamondShurman}
F.~Diamond and J.~Shurman, \emph{A First Course in Modular Forms}, Graduate Texts in Mathematics, vol.~228, Springer, New York, 2005.

\bibitem{DMS}
P.~Di Francesco, P.~Mathieu, and D.~S\'en\'echal, \emph{Conformal Field Theory}, Graduate Texts in Contemporary Physics, Springer, New York, 1997.

\bibitem{EichlerZagier}
M.~Eichler and D.~Zagier,
\emph{The Theory of Jacobi Forms},
Progress in Mathematics, Vol.~55,
Birkh\"auser Boston, Boston, MA, 1985.

\bibitem{FLM}
I.~B. Frenkel, J.~Lepowsky, and A.~Meurman, \emph{Vertex Operator Algebras and the Monster}, Pure and Applied Mathematics, Academic Press, Boston, 1988.

\bibitem{GasperRahman}
G.~Gasper and M.~Rahman, \emph{Basic Hypergeometric Series}, 2nd ed., Encyclopedia of Mathematics and its Applications, vol.~96, Cambridge Univ.\ Press, Cambridge, 2004.


\bibitem{repoprover}
F. Gloeckle, A. Rammal, C. Arnal, R. Munos, V. Cabannes, G. Synnaeve, A.Hayat, \emph{Automatic Textbook Formalization}, arXiv preprint arXiv:2604.03071.



\bibitem{GOW}
M.~J. Griffin, K.~Ono, and S.~O. Warnaar, \emph{A framework of Rogers–Ramanujan identities and their arithmetic properties}, Duke Math. J. \textbf{165} (2016), no.~8, 1475--1527.

\bibitem{GR2004}
Gabber, O., and Ramero, L, \emph{Foundations for almost ring theory}, arXiv preprint arXiv:math/0409584 (2004).

\bibitem{grinberg}
D. Grinberg, \emph{An introduction to algebraic combinatorics}, arXiv preprint arXiv:2506.00738 (2025).


\bibitem{isabelle}
T. Nipkow, M. Wenzel, and L. C. Paulson, eds. \emph{Isabelle/HOL: a proof assistant for higher-order logic}. Berlin, Heidelberg: Springer Berlin Heidelberg, 2002.

\bibitem{Jacobi1829}
C.~G.~J. Jacobi, \emph{Fundamenta nova theoriae functionum ellipticarum}, Sumtibus Fratrum Borntraeger, Regiomonti, 1829.

\bibitem{Johnson}
W. P. Johnson. An introduction to q-analysis. Vol. 134. American Mathematical Soc., 2020.

\bibitem{KacIDA}
V.~G. Kac, \emph{Infinite-Dimensional Lie Algebras}, 3rd ed., Cambridge Univ.\ Press, Cambridge, 1990.

\bibitem{LepowskyLi}
J.~Lepowsky and H.~S. Li, \emph{Introduction to Vertex Operator Algebras and Their Representations}, Progress in Mathematics, vol.~227, Birkh\"auser, Boston, 2004.


\bibitem{Mathlib2020}
The \texttt{mathlib} Community, \emph{The Lean mathematical library}, in \emph{Proceedings of the 9th ACM SIGPLAN International Conference on Certified Programs and Proofs (CPP 2020)}, ACM, New York, 2020, 367--381.

\bibitem{isabelle-theta}
M. Eberl, Theta Functions, \emph{Archive of Formal Proofs}, December 2024, \url{https://isa-afp.org/entries/Theta_Functions.html}, Formal proof development.

\bibitem{isabelle-pentagonal}
M. Eberl, The Partition Function and the Pentagonal Number Theorem, \emph{Archive of Formal Proofs}, April 2025, \url{https://isa-afp.org/entries/Pentagonal_Number_Theorem.html}, Formal proof development.

\bibitem{isabelle-rr}
M. Eberl, The Rogers–Ramanujan Identities, \emph{Archive of Formal Proofs}, December 2024, \url{https://isa-afp.org/entries/Rogers_Ramanujan.html}, Formal proof development.

\bibitem{katz}
N. M. Katz, \emph{P-adic properties of modular schemes and modular forms.} Modular Functions of One Variable III: Proceedings International Summer School University of Antwerp, RUCA July 17–August 3, 1972. Berlin, Heidelberg: Springer Berlin Heidelberg, 1973. 69-190.




\bibitem{OnoCBMS}
K.~Ono, \emph{The Web of Modularity: Arithmetic of the Coefficients of Modular Forms and $q$-series}, CBMS Regional Conference Series in Mathematics, vol.~102, Amer. Math. Soc., Providence, RI, 2004.

\bibitem{Rogers1894}
L.~J. Rogers, \emph{Second memoir on the expansion of certain infinite products}, Proc. London Math. Soc. \textbf{25} (1894), 318--343.

\bibitem{RR1919}
L.~J. Rogers and S.~Ramanujan, \emph{Proof of certain identities in combinatory analysis}, Proc. Cambridge Philos. Soc. \textbf{19} (1919), 211--216.

\bibitem{serre}
J.-P. Serre, \emph{Formes modulaires et fonctions z\^{e}ta p-adiques.} Modular Functions of One Variable III: Proceedings International Summer School University of Antwerp, RUCA July 17–August 3, 1972. Berlin, Heidelberg: Springer Berlin Heidelberg, 1973. 191-268.

\bibitem{Schoeneberg}
B.~Schoeneberg, \emph{Elliptic Modular Functions: An Introduction}, Springer, Berlin, 1974.

\bibitem{Schur1917}
I.~Schur, \emph{Ein Beitrag zur additiven Zahlentheorie und zur Theorie der Kettenbr\"uche}, Sitzungsberichte der K\"oniglich Preussischen Akademie der Wissenschaften zu Berlin (1917), 302--321.

\bibitem{SillsBook}
A.~V. Sills, \emph{An Invitation to the Rogers--Ramanujan Identities}, CRC Press, Boca Raton, FL, 2017.

\bibitem{SelsamTypeclass}
D.~Selsam, S.~Ullrich, and L.~de~Moura, \emph{Tabled typeclass resolution}, arXiv preprint arXiv:2001.04301 (2020).



\bibitem{Watson1929}
G.~N. Watson, \emph{A new proof of the Rogers--Ramanujan identities}, J. London Math. Soc. \textbf{4} (1929), 4--9.



\bibitem{WZ}
H. S. Wilf, D. Zeilberger. \emph{An algorithmic proof theory for hypergeometric (ordinary and “q”) multisum/integral identities}, Inventiones mathematicae 108, no. 1 (1992), 575-633.


\bibitem{ZagierRR}
D.~Zagier, \emph{Elliptic Modular Forms and Their Applications}, in \emph{The 1-2-3 of Modular Forms} (J.~H. Bruinier, G.~van~der~Geer, G.~Harder, and D.~Zagier, eds.), Universitext, Springer, Berlin, 2008, 1--103.


\end{thebibliography}
\end{document}